\newcommand{\fp}{{\mathfrak p}}
\newcommand{\bfm}{{\mathbf{m}}}
\newcommand{\bfM}{{\mathbf{M}}}
\newcommand{\bfN}{{\mathbf{N}}}
\newcommand{\bfT}{{\mathbf{T}}}
\definecolor{pink}{rgb}{1,.2,.6}
\definecolor{orange}{rgb}{0.7,0.3,0}
\definecolor{blue}{rgb}{.2,.6,.75}
\definecolor{green}{rgb}{.4,.7,.4}
\definecolor{purple}{RGB}{127,0,255}
\definecolor{darkgreen}{rgb}{0.09, 0.45, 0.27}
\newcommand{\rC}[1]{{\color{red}{}}}
\newcommand{\xtra}[1]{}
\newcommand{\exendnote}[1]{}
\numberwithin{equation}{section}
\newtheorem{thm}{Theorem}[section]
\newtheorem*{thm*}{Theorem}
\newtheorem{prop}[thm]{Proposition}
\newtheorem{lemma}[thm]{Lemma}
\newtheorem{cor}[thm]{Corollary}
\theoremstyle{definition}
\newtheorem*{dfn*}{Definition}
\theoremstyle{remark}
\newtheorem{remark}[thm]{Remark}
\newcommand{\A}{\mathbb{A}}
\newcommand{\C}{\mathbb{C}}
\newcommand{\F}{\mathbb{F}}
\newcommand{\Q}{\mathbb{Q}}
\newcommand{\R}{\mathbb{R}}
\newcommand{\Z}{\mathbb{Z}}
\newcommand{\Ocal}{\mathcal{O}}
\newcommand{\Dscr}{\mathscr{D}}
\newcommand{\Pscr}{\mathscr{P}}
\newcommand{\GL}{\mathrm{GL}}
\newcommand{\codim}{\mathrm{codim}}
\renewcommand{\b}[1]{\mathbf{#1}}
\newcommand{\con}{\equiv}
\newcommand{\ndiv}{\nmid}
\newcommand{\modd}[1]{\; ( \text{mod} \; #1)}
\newcommand{\bstack}[2]{\substack{#1 \\ #2}}
\newcommand{\maps}{\rightarrow}
\newcommand{\intersect}{\cap}
\newcommand{\Union}{\bigcup}
\newcommand{\supp}{{\rm supp \;}}
\newcommand{\al}{\alpha}
\newcommand{\be}{\beta}
\newcommand{\ga}{\gamma}
\newcommand{\del}{\delta}
\newcommand{\Del}{\Delta}
\newcommand{\sig}{\sigma}
\newcommand{\lam}{\lambda}
\newcommand{\Lam}{\Lambda}
\newcommand{\Pcal}{\mathcal{P}}
\newcommand{\Bcal}{\mathcal{B}}
\newcommand{\Gcal}{\mathcal{G}}
\newcommand{\Mcal}{\mathcal{M}}
\newcommand{\Hbf}{\mathbf{H}}
\newcommand{\Sbf}{\mathbf{S}}
\newcommand{\Ebf}{\mathbf{E}}
\newcommand{\hbf}{{\bf h}}
\newcommand{\mbf}{{\bf m}}
\newcommand{\xbf}{{\bf x}}
\newcommand{\beq}{\begin{equation}}
\newcommand{\eeq}{\end{equation}}
\def\@tocline#1#2#3#4#5#6#7{\relax
  \ifnum #1>\c@tocdepth 
  \else
    \par \addpenalty\@secpenalty\addvspace{#2}%
    \begingroup \hyphenpenalty\@M
    \@ifempty{#4}{%
      \@tempdima\csname r@tocindent\number#1\endcsname\relax
    }{%
      \@tempdima#4\relax
    }%
    \parindent\z@ \leftskip#3\relax \advance\leftskip\@tempdima\relax
    \rightskip\@pnumwidth plus4em \parfillskip-\@pnumwidth
    #5\leavevmode\hskip-\@tempdima
      \ifcase #1
       \or\or \hskip 1em \or \hskip 2em \else \hskip 3em \fi%
      #6\nobreak\relax
    \hfill\hbox to\@pnumwidth{\@tocpagenum{#7}}\par
    \nobreak
    \endgroup
  \fi}
\numberwithin{equation}{section}
\begin{document}
 
\title[Generalizations of the Schr\"odinger maximal operator]{Generalizations of the Schr\"odinger maximal operator: \\building arithmetic counterexamples}

\author[Chu]{Rena Chu}
\address{Duke University, 120 Science Drive, Durham NC 27708}
\email{rena.chu@duke.edu}

\author[Pierce]{Lillian B. Pierce}
\address{Duke University, 120 Science Drive, Durham NC 27708}
\email{pierce@math.duke.edu}

\maketitle

\begin{abstract}
    Let $T_t^{P_2}f(x)$ denote the solution to the linear Schr\"odinger equation at time $t$, with initial value function $f$, where $P_2 (\xi) = |\xi|^2$.   In 1980, Carleson asked for the minimal regularity of $f$ that is required for the pointwise a.e. convergence of   $T_t^{P_2} f(x)$ to $f(x)$ as $t \rightarrow 0.$ This was recently resolved   by   work of   Bourgain, and Du and Zhang. This paper considers more general dispersive equations, 
    and constructs counterexamples to pointwise a.e. convergence   for a new class of real polynomial symbols $P$ of arbitrary degree, motivated by  a broad question: what occurs for symbols lying in a generic class?   We construct the counterexamples using number-theoretic methods, in particular the Weil bound for   exponential sums, and the theory of Dwork-regular forms. This is the first case in which counterexamples are constructed for indecomposable forms, moving beyond special regimes where $P$ has some diagonal structure. 
\end{abstract}

\section{Introduction}

Given a polynomial $P(\xi)\in \R[\xi_1,...,\xi_n]$ of degree $k \geq 2$, the operator
    \begin{align}\label{T_int_dfn}
        T_t^{P}f(x):=\frac{1}{(2\pi)^n}\int_{\R^n} \hat{f}(\xi) e^{i(\xi \cdot x + P(\xi)t)} d\xi,
    \end{align}
initially defined for $f$ of Schwartz class on $\R^n$, gives a solution to the linear PDE
    \begin{align}
        \begin{cases}
     \partial_t u - i \Pcal(D) u =0, \quad (x,t) \in \R^n \times \R, \label{PDE}\\
    u(x,0) = f(x), \quad x \in \R^n.
    \end{cases}
    \end{align}
Here $D = \frac{1}{i}(\frac{\partial}{\partial x_1}, \ldots, \frac{\partial}{\partial x_n})$ and $\Pcal(D)$ is defined according to its real symbol by 
\[
\Pcal(D) f(x) = \frac{1}{(2\pi)^n} \int_{\R^n} e^{i \xi\cdot x}P(\xi)\hat{f}(\xi)d\xi.\]

    When $P(\xi)= |\xi|^2$, in which case (\ref{PDE}) is the linear Schr\"odinger equation,
 Carleson famously asked \cite[Eqn (14) p. 24]{Car80}: what is the smallest $s>0$ such that   
    \begin{align}\label{intro_convergence}
        \lim_{t\rightarrow 0} T_t^{P}f(x)=f(x), \qquad \text{a.e.  $x\in \R^n$, for all $f\in H^s(\R^n)$.}
    \end{align}
This question was resolved for dimension $n=1$ quite swiftly by \cite{Car80,DahKen82}, which established that (\ref{intro_convergence}) holds if and only if $s\geq 1/4.$
In higher dimensions, there is a long history of work on   necessary and sufficient conditions for the Schr\"odinger pointwise convergence problem, including \cite{Cow83,Car85,Sjo87,Veg88b,Bou95,MVV96,TaoVar00,Lee06,LucRog17,DGL17,DGLZ18,LucRog19}.   For several decades it was  expected that $s=1/4$ might be the critical threshold in all dimensions, until Bourgain pushed the necessary condition on $s$ above $1/4$ in \cite{Bou13}.  It was very recently resolved (up to the endpoint) by
Bourgain \cite{Bou16}, who showed that $s\geq 1/4 + \del(n)$ with $\del(n) =  (n-1)/(4(n+1))$ is necessary, while Du and Zhang \cite{DuZha19} showed that $s> 1/4 + \del(n)$ is sufficient. 

Bourgain's counterexample construction was interesting: it cleverly employed Gauss sums to force $\sup_{0<t<1}|T_t^Pf(x)|$ to be large (from which a violation to (\ref{intro_convergence}) can be deduced) for test functions $f$ defined using exponential sums. 
Recently, in \cite{ACP23} we expanded this idea into a more flexible method for producing counterexamples to pointwise convergence results of the form (\ref{intro_convergence}) for the initial value problem (\ref{PDE}), using the Weil bound for complete exponential sums. In that initial paper, we demonstrated the new method for symbols of the form $P(X_1,\ldots,X_n) =X_1^k + \cdots + X_n^k$ for any degree $k \geq 3$, and we proved that $s \geq 1/4 + \del(n,k)$ is necessary for (\ref{intro_convergence})  to hold, for $\del(n,k) = (n-1)/(4((k-1)n+1)).$ Subsequently \cite{EPV22a} adapted the method of \cite{ACP23} to achieve a result of the same strength, for any polynomial whose leading form (homogeneous part of highest degree) takes the special shape \beq\label{intro_special}
P_k(X_1,\ldots,X_n) = X_1^k + Q_k(X_2,\ldots,X_n),
\eeq
where $Q_k \in \Q[X_2,\ldots,X_n]$ is a nonsingular form of degree $k$ that is independent of $X_1$.  For degree 2 forms, the special shape (\ref{intro_special}) does not entail a loss of generality, since any quadratic form can be diagonalized over $\R$, and as we will explain, the underlying problem allows for such changes of coordinates. However, for $k \geq 3,$
  forms $P_k$ of the shape (\ref{intro_special}) are  quite sparse among degree $k$ forms in $\Q[X_1,\ldots,X_n]$ (in a sense we quantify in \S \ref{sec_details_examples}), and it is well-known that in arithmetic problems, a form with some diagonal structure is generally  easier to handle. We are motivated by the question: what is the minimal regularity required for (\ref{intro_convergence}) when the real polynomial symbol $P$ has   leading form belonging to a generic class of degree $k$ forms in $\Q[X_1,\ldots,X_n]$?

For any fixed real symbol $P$, the key to proving or disproving pointwise convergence as in (\ref{intro_convergence}) is the associated maximal operator 
\beq\label{T_max_dfn}
f \mapsto \sup_{0<t<1} |T_t^Pf|.
\eeq
For a given $s$, to prove that pointwise convergence (\ref{intro_convergence}) holds for all $f \in H^s(\R^n)$, it suffices to prove (for example) that the maximal operator maps $H^s(\R^n)$ to $L^2_{\mathrm{loc}}(\R^n)$. In the other direction, to prove that convergence (\ref{intro_convergence}) fails for some functions in $H^s(\R^n)$, it suffices to prove that the maximal operator is unbounded from $H^s(\R^n)$ to $L^1_{\mathrm{loc}}(\R^n)$; see for example \cite[Appendix A]{Pie20} for a summary of these standard arguments. Thus we state our main result in terms of showing the maximal operator (\ref{T_max_dfn}) is unbounded from $H^s(\R^n)$ to $L^1_{\mathrm{loc}}(\R^n)$ for $s$ in a certain range.

For any fixed $n \geq 2$ and degree $k \geq 2,$ nonsingular forms are generic among degree $k$ forms in $\Q[X_1,\ldots,X_n]$. 
Given a value $s>0$, the truth (or falsity) of a bound  of the form
    \beq\label{T_bound_inv}
        \|\sup_{0<t<1}|T_t^{P}f|\|_{L^1_{\mathrm{loc}}(\R^n)} \leq C_s \|f\|_{H^s(\R^n)} \qquad \text{for all $f \in H^s(\R^n)$}
    \eeq
  is invariant under $\mathrm{GL}_n(\R)$-action on the polynomial $P$ (see \S \ref{sec_change_var}). 
Thus if one wishes to understand this putative bound for an arbitrary polynomial $P$ with nonsingular leading form $P_k \in \Q[X_1,\ldots,X_n]$, it is no loss of generality to first apply a $\mathrm{GL}_n(\Q)$ change of variable to put $P_k$ in a convenient form. We heavily  exploit the following property: for \emph{every} nonsingular form in $L[X_1,\ldots,X_n]$ for an infinite field $L$, there is a $\mathrm{GL}_n(L)$ change of variables under which the form becomes \emph{Dwork-regular} (see the definition in (\ref{DR_dfn})). Thus in the study of generic forms, it is no loss of generality to focus on Dwork-regular forms, and we do so here. 

The fact that diagonalization, so convenient for quadratic forms, is out of reach for most higher-degree forms, is a dominant theme in the study of symmetric tensors (which, roughly speaking, generalize the symmetric matrix associated to a quadratic form). This has led to the development of many notions of rank for degree $k$ forms, including the Schmidt rank (or $h$-index),  Waring rank (symmetric tensor rank), slicing rank,  relative rank, the property of decomposability, and more. Each such notion of rank is motivated by specific applications in algebraic invariant theory, number theory, algebraic geometry, computational complexity, etc.
Similarly, our present work leads to  a new notion of rank, which we now define. 

\begin{dfn*}[Intertwining rank]
A variable $X_i$   intertwines with $X_j$ (with $i \neq j$) in a form $P_k$ of degree $k \geq 2$ if $(\partial^2/\partial X_i\partial X_j)P_k \not\con 0$. By convention, $X_i$ intertwines with itself. The intertwining rank $r(X_i)$ of   $X_i$ in $P_k$ is the number of variables  with which $X_i$ intertwines. The \emph{intertwining rank} of the form  $P_k$ is $\min_{1 \leq i \leq n} r(X_i)$. 
\end{dfn*}
For example, $X_1^3 + X_2^3 + X_3^3+X_4^3$  has intertwining rank 1, while $X_1^3 + X_1X_2^2 + X_2X_3X_4$ has intertwining rank $2.$ Our main result is:

\begin{thm}\label{thm:main2} Fix $n\geq 2$ and $k\geq 2$.
Let $P\in \R[X_1,...,X_n]$ be a polynomial whose leading form $P_k \in \Q[X_1,\ldots,X_n]$ is Dwork-regular in $X_1,\ldots,X_n$ over $\Q$ and has intertwining rank $r$. Suppose there is a constant $C_s$ such that for all $f\in H^s(\R^n)$,
    \beq\label{thm_T_bound}
        \|\sup_{0<t<1}|T_t^{P}f|\|_{L^1(B_n(0,1))} \leq C_s \|f\|_{H^s(\R^n)}.
    \eeq
Then $s\geq \frac{1}{4} + \del(n,k,r)$ with   \[\del(n,k,r)=\frac{n-r}{4((k-1)(n-(r-1))+1)}.\]
\end{thm}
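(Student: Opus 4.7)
The plan is to construct a counterexample adapting the framework of \cite{ACP23, EPV22a}, with the intertwining rank $r$ playing the role of a decoupling parameter that records how many Fourier variables can be handled independently via the Weil bound. By the $\mathrm{GL}_n(\Q)$-invariance of the bound (\ref{thm_T_bound}) (noted in \S\ref{sec_change_var}), I may relabel variables so that $X_1$ is a variable realizing the minimum intertwining rank $r$ of $P_k$, intertwining only with $X_1,\ldots,X_r$. Since $\partial_1 \partial_j P_k \equiv 0$ for all $j > r$, two integrations yield the decomposition
\[
P_k(X_1,\ldots,X_n) = E(X_1,\ldots,X_r) + F(X_2,\ldots,X_n),
\]
in which $X_1$ is entirely absent from $F$. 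The ``entangled'' middle variables $X_2,\ldots,X_r$ will be frozen to suitable integer constants, after which only $X_1$ (through $E$) and $X_{r+1},\ldots,X_n$ (through $F$) remain active.

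Next I construct a test function $f$ whose Fourier transform is localized near a product lattice set $\Lambda\subset \Z^n$, in the style of \cite{ACP23, EPV22a}. Specifically, $\xi_1$ varies over an interval of length $\asymp N$, the coordinates $\xi_2,\ldots,\xi_r$ are frozen at chosen values $\xi_2^\circ,\ldots,\xi_r^\circ \in \Z$, and each of $\xi_{r+1},\ldots,\xi_n$ varies over an interval of length $\asymp M$, where $M=N^{\alpha}$ for a parameter $\alpha \in (0,1]$ to be optimized. Then $|\Lambda|\asymp N M^{n-r}$ and, with an appropriate smooth cutoff, $\|f\|_{H^s(\R^n)}^2\asymp N^{2s}|\Lambda|$.

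I then evaluate the maximal operator at rational space-time points $(x,t)=(x_0/q,\,t_0/q^{k-1})$ indexed by a prime $q$ (whose size will be chosen to balance the phase modulus) and integers $x_0,t_0$. A standard Poisson-summation reduction (as in \cite[\S 4]{EPV22a}) converts $T_t^P f(x_0/q)$, up to negligible smooth errors, into a constant multiple of the complete exponential sum
\[
\Scal(x_0,t_0) = \sum_{\xi \in (\Z/q\Z)^n} e^{2\pi i(\xi\cdot x_0 + P_k(\xi)t_0)/q}\, W(\xi),
\]
where $W$ is a cutoff adapted to $\Lambda \bmod q$. After freezing $\xi_2,\ldots,\xi_r$ and using $P_k=E+F$, the sum $\Scal$ factors into a one-variable sum in $\xi_1$ (controlled by the classical Weil bound for curves) and an $(n-r)$-variable complete sum in $(\xi_{r+1},\ldots,\xi_n)$ driven by the degree-$k$ slice form $F|_{\xi_2^\circ,\ldots,\xi_r^\circ}$. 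The Dwork-regularity of $P_k$, once transferred to the slice, makes the associated projective hypersurface smooth over $\overline{\F_q}$ for all but finitely many primes $q$, so Deligne's form of the Weil conjectures provides square-root cancellation of size $q^{(n-r)/2}$ in the $(n-r)$-dimensional sum for generic $t_0$, matched by a large main term for a sparse set of exceptional $t_0$.

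To close the argument, I select $t_0$ in a residue class that produces a large main term and then maximize over $x_0$ in a box inside $B_n(0,1)$, thereby lower-bounding $\sup_{0<t<1}|T_t^P f(x)|$ on a set of positive measure; plugging this bound into (\ref{thm_T_bound}), balancing $N$, $M=N^\alpha$, and $q$ against the $H^s$-norm bound and the Weil savings, and optimizing $\alpha$, yields the claimed inequality $s \geq \tfrac{1}{4} + (n-r)/(4((k-1)(n-r+1)+1))$. The principal technical obstacle I anticipate is the transfer of Dwork-regularity along the slicing $\{\xi_j = \xi_j^\circ\}_{j=2}^{r}$: for the argument to succeed I need that, for generically chosen frozen values, the sliced form in $(\xi_1,\xi_{r+1},\ldots,\xi_n)$ still cuts out a smooth projective hypersurface over $\overline{\F_q}$ for almost all primes $q$. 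This stability-under-slicing is precisely why Dwork-regularity, rather than mere nonsingularity, is the natural hypothesis here, and I expect the bulk of the work to go into a careful geometric check of this fact together with the associated choice of the residue class for $t_0$.
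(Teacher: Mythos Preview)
Your proposal has the right high-level architecture---freeze the intertwined variables $\xi_2,\ldots,\xi_r$ and exploit the resulting decoupling---but two aspects diverge from the paper in ways that leave genuine gaps.

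First, your treatment of the $\xi_1$ variable is different from the paper's and appears not to give the needed gain. You propose to let $\xi_1$ range over a discrete set of $\asymp N$ lattice points and then invoke a one-variable Weil bound on the resulting sum. The paper does \emph{not} sum over $\xi_1$; instead it dilates the smooth cutoff in the $\xi_1$-direction by a factor $S_1=R^{1/2}$. This dilation is precisely the source of the $\tfrac14$ in the threshold (via the $S_1^{-1/2}=R^{-1/4}$ factor in $\|f\|_{L^2}$), and, equally importantly, it buys precision $O(1/(S_1 R^{k-1}))$ in the choice of $t$, which is what allows one to set $L^k t$ \emph{exactly} equal to a rational $2\pi a/q$ while simultaneously satisfying the Fourier-inversion constraint $t\approx -x_1/(\partial_1 P_k)(M\circ R,\tilde R)$. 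A discrete sum in $\xi_1$ with Weil savings of size $q^{1/2}$ is a different mechanism at a different scale (since $q=R^\kappa$ with $\kappa<1$), and you have not shown that optimizing your parameters $N,M,q$ reproduces $\delta(n,k,r)$; the assertion at the end is unsupported.

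Second, you misidentify where the intertwining hypothesis is actually used. You anticipate the main obstacle to be the transfer of Dwork-regularity under slicing; in the paper this is Proposition~\ref{prop:dwork_deligne} and is comparatively routine. The essential role of the intertwining rank is in the \emph{partial summation} step (the paper's \S\ref{sec:expsum_partial_sum}): when one removes the non-arithmetic weight $e(Q(\cdot)t)$ from the sum over $m_{r+1},\ldots,m_n$, the derivative bound \eqref{eqn:mixed_partial_P_tilde} produces a term of size $S_1 R^{k-1}(L/R)^{|\kappa|}$ whenever $P_k$ contains a monomial with both $X_1$ and some $X_j$, $j>r$. That term would force $S_1\ll 1$ and destroy the $1/4$ gain. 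The hypothesis that $X_1$ does not intertwine with $X_{r+1},\ldots,X_n$ is exactly what makes $\partial_j\partial_1 P_k\equiv 0$ for $j>r$, killing those bad terms. Your ``standard Poisson-summation reduction\ldots up to negligible smooth errors'' glosses over precisely this point; the errors are not negligible without the intertwining control, and the paper's \S5 is devoted to making this quantitative.

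A cleaner version of your idea would be: freeze $\xi_2,\ldots,\xi_r$, observe that the leading form of the resulting polynomial in $(\xi_1,\xi_{r+1},\ldots,\xi_n)$ has the special shape $c\xi_1^k+G(\xi_{r+1},\ldots,\xi_n)$ with $G$ nonsingular (by Lemma~\ref{lem:dwork_equiv} with $S=\{r+1,\ldots,n\}$), then invoke the rank-$1$ result of \cite{EPV22a} in dimension $n-r+1$ as a black box, noting $\delta(n-r+1,k,1)=\delta(n,k,r)$. But to make that rigorous you must lift the $(n{-}r{+}1)$-dimensional counterexample to $\R^n$ and control the errors coming from the width-$1$ smearing in $\xi_2,\ldots,\xi_r$; this control is exactly the content of the paper's integration-by-parts step in \S\ref{sec:expsum_partial_int}, and carrying it out amounts to reproving the theorem rather than citing \cite{EPV22a}.
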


We now briefly situate Theorem \ref{thm:main2} with respect to   previous literature, and then we   explain the context of Dwork-regular forms, describe more precisely the notion of ``generic'' forms, and illustrate that a  strength of the theorem is its application to indecomposable forms.

\subsection{Relation to previous literature on  convergence problems}  As an immediate consequence of Theorem \ref{thm:main2}, pointwise convergence as in (\ref{intro_convergence}) fails for some $f \in H^s(\R^n)$ for the initial value problem (\ref{PDE}) defined by $P$, for each $s< 1/4 + \del(n,k,r)$  (following the standard arguments recorded in \cite[Appendix A]{Pie20}).
Our present work also adapts (in a trivial way) to dimension $n=r=1$ (see Remark \ref{remark_n_r_1}), but we omit the details, since in 1 dimension,  (\ref{intro_convergence}) holds for all polynomials $P$  of degree $k \geq 2$ if $s \geq 1/4$ and fails if $s<1/4$, by \cite[Cor. 2.6]{KPV91}, \cite{DahKen82,KenRui83}.

The threshold $1/4$ is a common sticking point of many methods in the literature relating to the convergence problem (\ref{intro_convergence}); see e.g. the survey in \cite[\S 1.2]{ACP23}. The main content of Theorem \ref{thm:main2} is for $n \geq 3$, $k \geq 3$, and $2 \leq r<n$.
 In all dimensions,   counterexamples to   (\ref{intro_convergence}) and (\ref{thm_T_bound}) for all $s<1/4$, for any real polynomial symbol (with leading form of any intertwining rank $r \geq 1$), are due to Sj\"{o}lin \cite{Sjo98}.  Theorem \ref{thm:main2} is the first result to go beyond $1/4$ for intertwining rank $r \geq 2$, for all dimensions $n \geq 3$. The strength of our result decreases as $r$ increases, and subsides to the requirement $s \geq 1/4$ when $r=n.$
   For intertwining rank $r=1$,   Theorem \ref{thm:main2} recovers the special case of diagonal symbols considered in \cite{ACP23}, and   the symbols of the form (\ref{intro_special}) considered in \cite{EPV22a}.  For degree $k=2$, by the spectral theorem, any quadratic leading form is diagonalizable under $\mathrm{GL}_n(\R)$, which (after further renormalization) reduces the case of quadratic forms to the case of intertwining rank $r=1$.
For dimension $n=2$, the only cases are intertwining rank $r=1$, in which case Theorem \ref{thm:main2} recovers a result of \cite{EPV22a}, and $r=2$, in which case Theorem \ref{thm:main2} states $s \geq 1/4$, which was previously known. It remains an interesting open question whether the regularity condition in Theorem \ref{thm:main2} can be increased further.

In all dimensions,  (\ref{intro_convergence}) holds for all $s>1/2$, for a wide class of differentiable functions, including  any real
polynomial $P$ of principal type of order $\al$ for $\al>1$ (meaning $|\nabla P(\xi)| \gg (1+|\xi|)^{\al-1}$ for all sufficiently large $|\xi|$), by \cite[Thm. D]{BenDev91} and \cite[Remark 2.2]{RVV06}.  
  Positive results proving bounds related to (\ref{thm_T_bound}) for $s \leq 1/2$, such as the celebrated work in the case $P(\xi)=|\xi|^2$ in \cite{DuZha19},   must proceed by entirely different methods. For further notes on the vast literature on convergence results, maximal operators and connections to local smoothing,   we refer to \cite[\S 1.2]{ACP23}. 
  
\subsection{The role of Dwork-regular forms} Dwork-regular forms have been extensively developed by Dwork \cite{Dwo62} and later Katz (e.g. \cite{Kat08,Kat09}). 
To set the context for their definition, first recall that  a form $P_k$ is said to be nonsingular over $\Q$  if the polynomials $P_k$, $\partial P_k /\partial X_1,\ldots, \partial P_k/ \partial X_n$ have no common zeroes in $\mathbb{P}_{\overline{\Q}}^{n-1}$ (correspondingly the projective hypersurface defined by $P_k=0$ in $\mathbb{P}_{\overline{\Q}}^{n-1}$ is nonsingular). (Here and throughout, $\overline{L}$ denotes a fixed algebraic closure of a given field $L$, and $\mathbb{P}_{\overline{L}}^{n-1}$ denotes the $(n-1)$-dimensional projective space over the field $\overline{L}$.)
In comparison, $P_k$ is said to be Dwork-regular over $\Q$ in the variables $X_1,\ldots,X_n$ if  there are no simultaneous solutions in $\mathbb{P}^{n-1}_{\overline{\Q}}$ to
\beq\label{DR_dfn}
        P_k(X_1,\ldots,X_n)=0, \qquad X_i\frac{\partial P_k}{\partial X_i}(X_1,\ldots,X_n)=0, \qquad 1 \leq i \leq n.
\eeq
A comparison of the definitions shows that any Dwork-regular form over $\Q$ is nonsingular over $\Q$.  As mentioned before, any nonsingular form becomes Dwork-regular  under an appropriate change of variables (see \S \ref{sec:dwork}).
Our interest in passing to Dwork-regular forms is that they are particularly amenable to applications of the Weil-Deligne bound (Lemma \ref{lem:deligne}) even after fixing one or more variables (a consequence of Proposition \ref{prop:dwork_deligne}). This  allows us to make new progress on the convergence problem (\ref{intro_convergence}) despite a central difficulty that appears if each variable ``interacts'' with other variables in the leading form $P_k$.    Intertwining rank captures  the amount of such interaction.  The   novelty in our present work is that we can prove new results for forms of  all intertwining ranks $1<r<n$.

\subsection{Genericity: an underlying motivating question}
We are motivated by the question: what is the behavior of the initial value problem (\ref{PDE}) when $P_k$ is a ``generic''   form in $\Q[X_1,\ldots,X_n]$?   Technically, a class of forms is said to be generic if it corresponds to a  open set (in the Zariski topology) in the moduli space of all degree $k$ homogeneous forms in $\Q[X_1,\ldots,X_n].$ 

As one example,   nonsingular forms are generic. It is equivalent to show that being singular is a condition on the coefficients of $P_k$ that defines a closed set in the Zariski topology.  Since $P_k$ is singular if and only if $P_k, \partial P_k/\partial X_1,...,\partial P_k/\partial X_n$ have a common nonzero root, then $P_k$ is singular if and only if  the resultant of $P_k, \partial P_k/\partial X_1,...,\partial P_k/\partial X_n$  vanishes. This resultant is a (nonzero) polynomial in the coefficients of $P_k, \partial P_k/\partial X_1,...,\partial P_k/\partial X_n$, so that $P_k$ being singular is characterized by its coefficients lying in the vanishing set of a polynomial, proving the claim. 

As another example,   \emph{indecomposable} forms are generic; we describe this property thoroughly in \S \ref{sec:indecomposable} below. The union of two Zariski closed sets (e.g. the set of singular forms and the set of decomposable forms) is closed, and so the complement (e.g. the set of forms that are nonsingular and indecomposable) is open, and hence generic. 
Indeed, any generic condition will include nonsingular forms (generically), and indecomposable forms (generically). This fact cuts in two directions, one convenient and one inconvenient.  First, on the one hand, even if we are interested in studying generic forms, it is reasonable only  to   consider nonsingular forms (which is advantageous for an application of Lemma \ref{lem:deligne}). But on the other hand, it shows that to understand the generic situation, we must  understand the case of indecomposable forms. 

One strength of Theorem \ref{thm:main2} is that it proves the first  (nontrivial) counterexamples to (\ref{intro_convergence})  that apply to  leading forms $P_k$ that are indecomposable. Thus in the next section we describe decomposability/indecomposability in more detail.  
Nevertheless, Theorem \ref{thm:main2} falls short of proving nontrivial results for a generic class of forms: it is only nontrivial for forms of rank strictly smaller than $n$, and these are not generic. In \S \ref{sec_codim}  we compute the codimension of Dwork-regular forms of   intertwining rank $r<n$, among degree $k$ forms in $\Q[X_1,\ldots,X_n]$. This codimension quantifies  that  Theorem \ref{thm:main2} proves nontrivial results for a class of forms that is not generic, but that nevertheless contains    ``many more'' real symbols than were tractable in previous works.

\subsection{Indecomposable forms of degree $k$:   definition, remarks and examples}\label{sec:indecomposable}

A form is called decomposable (or sometimes of Sebastiani-Thom type) over a field $L$ if there is a $\mathrm{GL}_n(L)$ change of variables so that the form can be written as a sum of at least two forms in disjoint sets of variables, for example
\beq\label{PQQ}
P(X_1,...,X_n) = Q_1(X_{1},...,X_{m})+Q_2(X_{m+1},...,X_{n})
\eeq
 for some $1 \leq m < n$.
 Otherwise, a form is indecomposable over $L$.  
All of the forms considered in \cite{ACP23,EPV22a} were of the special shape (\ref{intro_special}),  
and thus have intertwining rank $r=1$. All forms with intertwining rank $r=1$ are decomposable. 
Yet we are   motivated to tackle indecomposable forms, since in the moduli space of degree $k \geq 3$ forms in $\Q[X_1,\ldots,X_n],$   indecomposable forms are generic for $n \geq 2$, at least for $(n,k)\neq (2,3)$. (This is related e.g. to \cite[\S 6]{Wan15}, \cite[Thm. 3.2]{HLYZ21}, \cite[p. 303]{ORySha03}; see further details in \S \ref{sec_details_examples}.)

  Note that the intertwining rank of any decomposable polynomial is at most $\lfloor n/2 \rfloor$; this can be seen by inspecting (\ref{PQQ}). We use this to deduce the following immediate corollary of Theorem \ref{thm:main2}, verified in \S \ref{sec_cor}. 
\begin{cor}\label{corollary_decomp}
Fix $n\geq 2$ and $k\geq 2$.
Let $P\in \R[X_1,...,X_n]$ be a polynomial whose leading form $P_k \in \Q[X_1,\ldots,X_n]$ is decomposable and nonsingular in $X_1,\ldots,X_n$ over $\Q$. Suppose there is a constant $C_s$ such that for all $f\in H^s(\R^n)$,
   \[
        \|\sup_{0<t<1}|T_t^{P}f|\|_{L^1(B_n(0,1))} \leq C_s \|f\|_{H^s(\R^n)}.
\]
Then $s\geq \frac{1}{4} + \frac{n}{4((k-1)(n+2)+2)}$. 
\end{cor}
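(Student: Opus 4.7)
The plan is to deduce the corollary from Theorem \ref{thm:main2} by producing, after a $\mathrm{GL}_n(\Q)$ change of variables, coordinates in which $P_k$ is simultaneously Dwork-regular in all $n$ variables and has intertwining rank at most $\lfloor n/2\rfloor$. Since the bound (\ref{thm_T_bound}) is $\mathrm{GL}_n(\R)$-invariant (\S \ref{sec_change_var}), and decomposability over $\Q$ is by definition preserved by $\mathrm{GL}_n(\Q)$, I would first apply the change of variables given by decomposability to write
\[
P_k(X_1,\ldots,X_n) = Q_1(X_1,\ldots,X_m) + Q_2(X_{m+1},\ldots,X_n)
\]
for some $1 \leq m \leq n-1$; after relabeling, take $m \leq \lfloor n/2\rfloor$. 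Since the two variable sets are disjoint and $P_k$ is homogeneous of degree $k$, both $Q_1$ and $Q_2$ must also be homogeneous of degree $k$.

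Next I would verify that each $Q_i$ is nonsingular in its own variables over $\Q$: a singular point $(a_1,\ldots,a_m) \in \mathbb{P}_{\overline{\Q}}^{m-1}$ of $Q_1$ would lift to $(a_1,\ldots,a_m,0,\ldots,0) \in \mathbb{P}_{\overline{\Q}}^{n-1}$, a singular point of $P_k$ (using that $\partial Q_2/\partial X_j$ vanishes at the origin since $k \geq 2$), contradicting the hypothesis; symmetrically for $Q_2$. I would then invoke the Dwork-regularization result recalled in \S \ref{sec:dwork} on each $Q_i$ separately, via a block-diagonal transformation in $\mathrm{GL}_m(\Q) \times \mathrm{GL}_{n-m}(\Q) \subset \mathrm{GL}_n(\Q)$, which preserves the decomposable structure. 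The key check is that the sum of two forms on disjoint variable sets, each Dwork-regular in its own variables, is Dwork-regular on the union: at any common zero in $\mathbb{P}_{\overline{\Q}}^{n-1}$ of $P_k = 0$ and of $X_i(\partial P_k/\partial X_i) = 0$ for every $i$, Euler's identity gives $k Q_1 = \sum_{i=1}^m X_i(\partial Q_1/\partial X_i) = 0$ and likewise $k Q_2 = 0$; Dwork-regularity of $Q_1$ then forces $X_1=\cdots=X_m=0$, and of $Q_2$ forces $X_{m+1}=\cdots=X_n=0$, contradicting projective nontriviality.

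In these coordinates, the mixed second partials of $P_k$ across the two blocks vanish identically, so the intertwining rank of $P_k$ satisfies $r \leq \min(m,n-m) = m \leq \lfloor n/2\rfloor$. Theorem \ref{thm:main2} then yields $s \geq \tfrac{1}{4} + \delta(n,k,r)$. A short monotonicity calculation shows that $\delta(n,k,r) = (n-r)/(4((k-1)(n-r+1)+1))$ is strictly decreasing in $r$ (for fixed $n,k$), so $\delta(n,k,r) \geq \delta(n,k,\lfloor n/2\rfloor)$; substituting $r = \lfloor n/2\rfloor$ and comparing to $n/(4((k-1)(n+2)+2))$ gives equality when $n$ is even and a strict inequality $\geq$ when $n$ is odd (in the latter case yielding a bound genuinely stronger than the stated one). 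Either way, the corollary's lower bound on $s$ follows.

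The main (mild) obstacle in this plan is the verification that componentwise Dwork-regularization produces a globally Dwork-regular form; the Euler-identity computation above dispatches this cleanly. The remaining steps — descending nonsingularity to each block, tracking intertwining rank through the block decomposition, and the elementary monotonicity check on $\delta(n,k,r)$ — are routine bookkeeping.
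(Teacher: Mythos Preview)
Your proposal is correct and follows essentially the same route as the paper's own verification in \S \ref{sec_cor}: decompose $P_k$, descend nonsingularity to each block, Dwork-regularize each block separately via a block-diagonal $\GL_n(\Q)$ map, observe the intertwining rank is at most $\lfloor n/2\rfloor$, and invoke the monotonicity of $\delta(n,k,r)$ in $r$. You in fact supply more detail than the paper at two points --- the explicit descent of nonsingularity to each $Q_i$, and the Euler-identity check that blockwise Dwork-regularity implies global Dwork-regularity --- both of which the paper simply asserts.
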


Since it   has been remarked that it is challenging to exhibit indecomposable forms (see e.g. \cite[p. 348]{Pum06}, \cite[p. 576]{Wan15}), we provide explicit examples.
For each degree $k \geq 3$ and rank $2\leq r \leq n,$ we exhibit indecomposable forms of degree $k$ that are Dwork-regular over $\Q$ in $X_1,\ldots, X_n$ with   intertwining rank $r$, namely \begin{align*}
    P_k(X_1,\ldots,X_n)&=X_1^k+\cdots +   X_n^k + \sum_{2\leq j \leq r} X_1X_j^{k-1} + \sum_{2\leq i < j \leq n} X_iX_j^{k-1},\quad \text{$k \geq 3$ odd;}
   \\
        P_k(X_1,\ldots,X_n)&=X_1^k+\cdots +   X_n^k + \sum_{2\leq j \leq r} X_1^2X_j^{k-2}+ \sum_{2\leq i < j \leq n} X_i^2X_j^{k-2},
   \quad \text{$k \geq 4$ even.}
   \end{align*}
For example, in dimension $n=3$ the examples with intertwining rank $2$ are  
\begin{align*}
    P_k(X_1,X_2,X_3)&=X_1^k+X_2^k +   X_3^k +   X_1X_2^{k-1}+ X_2X_3^{k-1},\quad \text{$k \geq 3$ odd;}
   \\
        P_k(X_1,X_2,X_3)&=X_1^k+X_2^k +   X_3^k +   X_1^2X_2^{k-2}  + X_2^2X_3^{k-2},
   \quad \text{$k \geq 4$ even.}
\end{align*}
In  \S \ref{sec_details_examples}, we use a criterion of Harrison \cite{Har75,HarPar88} to verify that these are indecomposable forms over $\Q$ (and hence in particular cannot be brought to have intertwining rank $1$ by any $\mathrm{GL}_n(\Q)$ change of variables).

\subsection{Further directions} By  the invariance of (\ref{T_bound_inv}) under $\mathrm{GL}_n(\Q)$-action on $P$, the main result of Theorem \ref{thm:main2} furthermore applies  to any polynomial $P$ with leading form $P_k \in \Q[X_1,\ldots,X_n]$ lying in the $\mathrm{GL}_n(\Q)$-orbit of a Dwork-regular form of  intertwining rank $r$. How big is such an orbit? This points to an interesting question, which   is in fact typical when one encounters a notion of rank for a higher degree form. Given a particular notion of rank, in an application one often wants to manipulate the original form (or class of forms) to make the (particular) rank more advantageous; the limits of this procedure may depend on the underlying field (and whether it is algebraically closed). For example, in the case of Schmidt rank, there is recent work on regularization and a new relative rank in \cite{LamZie21x}, and the relation to algebraic closure in \cite{LamZie22x}. In the case of Waring rank, see the celebrated work of Alexander and Hirschowitz \cite{AleHir95}  over $\C$ (and a nice overview in \cite{RanSch00}), or more recent work for monomials in \cite{CCG12} and partial progress over $\R$ or $\Q$ in \cite{HanMoo22},  with intriguing remarks on the dependence on the underlying field in \cite{Rez13}.
For our particular setting, this becomes the question: how does the intertwining rank behave under minimization via $\mathrm{GL}_n(\Q)$?   We pursue this question, which requires completely different methods, in other work.

\subsection{Notation} 
In this paper we employ the convention $e(t) = e^{it}$. Correspondingly, $\hat{f}(\xi) = \int_{\R^m}f(x) e^{-i x\cdot \xi} dx$ and $f(x) = (2\pi)^{-m}\int_{\R^m} \hat{f}(\xi) e^{i x \cdot \xi} d\xi$, so that Plancherel's theorem takes the form $\|f\|^{2}_{L^2(\R^m)} = (2\pi)^{-m} \|\hat{f}\|_{L^2(\R^m)}^2.$ The Sobolev space $H^s(\R^m)$ is defined to be all $f \in \mathcal{S}'(\R^m)$ with finite Sobolev norm
\[ \|f\|_{H^s(\R^m)}^2 = \frac{1}{(2\pi)^m} \int_{\R^m} (1+|\xi|^2)^s |\hat{f}(\xi)|^2 d\xi.\]

We use the convention that $B_m(c,r)$ is the Euclidean ball of radius $r$ centered at $c$ in $\R^m$. The notation $A \ll_\kappa B$ denotes that $|A| \leq C(\kappa) B$ for a constant $C(\kappa)$. It is harmless in our argument to allow constants to depend on the dimension $n$, the symbol $P$ of degree $k$, the intertwining rank $r$, and a Schwartz function $\phi$ we will fix once and for all. Certain small constants, which we can choose freely,  we will denote by $c_0,c_1,c_2,...$; we will demarcate these explicitly in inequalities when we are preparing to exploit their small size. 
For $v=(v_1,...,v_m),w=(w_1,...,w_m)\in \R^m$, we define $v\circ w = (v_1w_1,...,v_mw_m)$. For a multi-index $\al$ we set $y^\al = y_1^{\al_1} \cdots y_n^{\al_n}$, $\al! = \al_1! \cdots \al_n!,$ $|\al| = \al_1 + \cdots + \al_n,$ and $\partial^\al  = \partial_1^{\al_1}  \cdots \partial_n^{\al_n};$ for two multi-indices $\al,\be$, $\al \geq \be$ and $\al - \be$ denote coordinate-wise relations.

\section{Method of proof}\label{sec:method}

To prove Theorem \ref{thm:main2}, we construct a family of test functions $\{f_j\}$ that are Fourier-supported in an annulus $\{(1/C)R_j\leq |\xi|\leq CR_j\}$ of radius $R_j$ for a sequence of $R_j \maps \infty$ as $j \maps \infty$. By definition
\[R_j^s \|f_j\|_{L^2}\ll_{C,s} \|f_j\|_{H^s} \ll_{C,s} R_j^s \|f_j\|_{L^2}.\]
Hence Theorem  \ref{thm:main2} follows immediately from an explicit construction:

\begin{thm}\label{thm:main3}
Let $n \geq 2$. Let $P\in \R[X_1,...,X_n]$ be a polynomial of degree $k \geq 2$ whose leading form $P_k\in \Q[X_1,\ldots,X_n]$   is Dwork-regular over $\Q$ in the variables $X_1,\ldots,X_n$, and has intertwining rank  $r \leq n$. Fix any $s< \frac{1}{4} + \del(n,k,r)$ with $\del(n,k,r)$ as in Theorem \ref{thm:main2}. Then there exists an infinite sequence of $j \maps \infty$ such that for  $R_j=2^j$, there exists a function $f_j\in L^2(\R^n)$, where $\|f_j\|_{L^2}=1$ and $\hat{f}_j$ is supported in an annulus $\{(1/C)R_j\leq |\xi|\leq CR_j\}$, and with the property that
    \[
        \lim_{j\rightarrow \infty} \frac{\|\sup_{0<t<1} |T_t^{P}f_j(x)|\|_{L^1(B_n(0,1))}}{R_j^s} = \infty.
\]
\end{thm}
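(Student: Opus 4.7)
The plan is to adapt the arithmetic construction of \cite{ACP23, EPV22a} by using the intertwining rank hypothesis to reduce to an effective dimension $n' := n - r + 1$, in which the analysis parallels the rank-$1$ (diagonal) case. After permuting coordinates, assume $X_1$ achieves the minimum $r(X_1) = r$, so that $(\partial^2/\partial X_1\partial X_j)P_k \equiv 0$ for all $j > r$. Expanding $P_k$ in powers of $X_1$, this forces the coefficient of each $X_1^i$ with $i \geq 1$ to depend only on $X_2,\ldots,X_r$, so
\[
P_k(X_1,\ldots,X_n) \;=\; \sum_{i=1}^{k} X_1^i\, h_i(X_2,\ldots,X_r) \;+\; h_0(X_2,\ldots,X_n).
\]
Freezing $X_2 = a_2, \ldots, X_r = a_r$ therefore produces a restricted leading form $\tilde P_k(X_1, X_{r+1},\ldots,X_n)$ that splits (modulo a constant) as a univariate polynomial in $X_1$ plus a form in $X_{r+1},\ldots,X_n$ alone. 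This puts us in the rank-$1$ regime of $n'$ free variables.

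Given $R_j = 2^j$, pick a prime $p$ in the dyadic range $[R_j^\theta, 2R_j^\theta]$, with $\theta = 1/((k-1)n' + 1)$ the exponent we will optimize. By Dwork-regularity of $P_k$ and Proposition \ref{prop:dwork_deligne}, select residues $(a_2,\ldots,a_r) \in (\mathbb{F}_p^\times)^{r-1}$ from a positive-proportion set for which the restricted form $\tilde P_k \pmod p$ remains amenable to the Weil-Deligne bound in its $n'$ free variables. Then set
\[
\hat f_j(\xi) \;=\; \frac{1}{|\Xi|^{1/2} M^{n/2}} \sum_{\zeta \in \Xi} \phi\!\left(\frac{\xi - R_j\zeta/(Np)}{M}\right),
\]
where $\phi$ is a fixed Schwartz bump supported in $B_n(0,1)$, $N$ is a large fixed integer, $M \asymp R_j^{1-\theta}$, and
\[
\Xi \;=\; \bigl\{ (\zeta_1, a_2, \ldots, a_r, \zeta_{r+1},\ldots,\zeta_n) \,:\, (\zeta_1, \zeta_{r+1}, \ldots, \zeta_n) \in \Xi' \bigr\},
\]
with $\Xi' \subset (\mathbb{F}_p^\times)^{n'}$ the subset on which $\nabla \tilde P_k \not\equiv 0 \pmod p$, so $|\Xi'| \gg p^{n'}$. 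One checks that $\hat f_j$ is supported in the annulus $\{(1/C)R_j \leq |\xi| \leq CR_j\}$ and $\|f_j\|_{L^2} \asymp 1$.

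For $t_0 = 2\pi q/p$ with $q \in (\mathbb{Z}/p)^\times$ and for $x_0$ lying on a suitable sublattice (modulo errors of size $\ll R_j^{\theta-1}$), Taylor-expand the phase $x_0 \cdot \xi + P(\xi) t_0$ about each center $R_j\zeta/(Np)$. The expansion splits into a $\zeta$-independent piece, a resonant piece of the form $(2\pi/p)(q\,\tilde P_k(\zeta) + m \cdot \zeta)$ for some $m \in \mathbb{Z}^{n'}$, and a smooth remainder of magnitude $\ll 1$. The resulting complete exponential sum over $\zeta \in \Xi'$ is either coherent of size $\asymp p^{n'}$ (when $m$ matches the dominant directions of $\tilde P_k$, i.e.\ a resonance) or bounded by $O_{k,n}(p^{n'/2})$ by the Weil-Deligne inequality, applied to $\tilde P_k$ using Dwork-regularity after freezing. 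Counting resonant $(t_0, x_0)$ with $x_0 \in B_n(0,1)$ and summing the corresponding lower bounds on $|T_{t_0}^P f_j(x_0)| \asymp |\Xi|^{1/2} M^{n/2}$ yields
\[
\|\sup_{0<t<1}|T_t^P f_j|\|_{L^1(B_n(0,1))} \;\gtrsim\; R_j^{1/4 + \delta(n,k,r)}
\]
after optimization in $\theta$; dividing by $R_j^s$ for $s < 1/4 + \delta(n,k,r)$ yields the desired divergence.

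The main obstacle is ensuring that after freezing $r-1$ variables to integers, the restricted form $\tilde P_k$ still admits a Weil-Deligne bound of full strength in its $n'$ remaining variables. This is precisely where Dwork-regularity, rather than mere nonsingularity of $P_k$, becomes essential: it guarantees that restrictions to almost every coordinate hyperplane remain well-behaved for Deligne-type square-root cancellation, a property that ordinary nonsingular forms can fail upon restriction. A secondary difficulty is the scale bookkeeping --- arranging $M$, $p$, and $N$ so that the annular Fourier support, the Taylor-expanded phase, and the density of resonances in $B_n(0,1)$ are simultaneously compatible with the optimal exponent $\theta = 1/((k-1)n'+1)$ --- which proceeds as in \cite{ACP23, EPV22a} but must be carried out with the frozen coordinates treated as constants throughout.
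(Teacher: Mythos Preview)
Your high-level strategy is sound and matches the paper's approach: the intertwining-rank hypothesis indeed lets one freeze the $r-1$ coordinates $X_2,\ldots,X_r$ near fixed values so that the leading form restricts to the split shape $cX_1^k + Q_k(X_{r+1},\ldots,X_n)$, and the numerology $\delta(n-r+1,k,1)=\delta(n,k,r)$ checks out. The paper carries out exactly this freezing (the bump $\Phi_r(S_1x_1,x_2,\ldots,x_r)e((M\circ R)\cdot v)$ in \S5.1 fixes $\xi_2,\ldots,\xi_r$ near $M_2R,\ldots,M_rR$), and the Dwork-regularity input you identify is precisely Proposition~\ref{prop:dwork_deligne}.

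However, the test function you write down is not the rank-1 construction of \cite{ACP23,EPV22a}, and as written it does not work for $k\geq 3$. You place $\zeta_1$ in the lattice sum $\Xi'$ on equal footing with $\zeta_{r+1},\ldots,\zeta_n$, with a uniform bump width $M$ in every direction. But in the ACP23/EPV22a construction the first coordinate is \emph{not} a lattice direction: it carries a single bump of width $S_1\asymp R^{1/2}$, and the exponential sum runs only over the remaining $n'-1=n-r$ coordinates. The reason is the constraint from Fourier inversion (what the paper calls Step~3 in \S\ref{sec_heuristic}): one needs $\Phi_n(S\circ(x+t\nabla P_k))\approx 1$, which for each $j$ forces $t\approx -x_j/R^{k-1}+O(1/(S_jR^{k-1}))$. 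With a single $t$ this can only be satisfied on a positive-measure set of $x$ if all $S_j$ but one are $O(1)$; the one large $S_j$ is what buys the extra $R^{1/4}$ in the final exponent. An all-lattice-directions ansatz of Bourgain type works for $k=2$ but not for $k\geq 3$.

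Consequently the complete sum you should be analyzing is $(n-r)$-dimensional, with Weil--Deligne bound $q^{(n-r)/2}$, and the mechanism is not a resonance/nonresonance dichotomy but the second-moment argument of Proposition~\ref{prop:bound_T} showing $|\mathbf T(a,b;q)|\gg q^{(n-r)/2}$ for a positive proportion of $(a,b)$. Your single scale parameter $\theta$ is also too coarse: the construction needs three separately optimized scales $S_1=R^\sigma$, $L=R^\lambda$, $Q=R^\kappa$ (see \S\ref{sec:parameters}), subject to the constraints $\sigma\leq 1/2$ and \eqref{eqn:cond_LQRS}. Finally, note that the frozen values are $\asymp R$, so the restricted polynomial's lower-order coefficients grow with $R$; one cannot invoke the rank-1 result as a black box but must redo the partial-summation and integration-by-parts error analysis (\S\ref{sec:expsum_partial_sum}--\ref{sec:expsum_partial_int}) tracking this dependence, which is where the intertwining hypothesis re-enters to kill the $|\alpha|=\alpha_1=1$ terms.
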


To prove Theorem \ref{thm:main3}, we define each test function $f$ so that $|T_t^Pf(x)|$ can be approximated by  an $(n-r)$-dimensional exponential sum, which we   show is ``large'' for many $x \in B_n(0,1)$, after choosing $t$ appropriately (depending on   $x$).  This strategy is motivated by ideas of Bourgain (for degree $k=2$) as explained in \cite{Pie20}, and  the flexible construction (applicable to degree $k\geq 3$) developed in our earlier work \cite{ACP23}, for the diagonal case $P(X_1,\ldots,X_n)=X_1^k + \cdots + X_n^k.$
However, a difficulty arises if the leading form $P_k$ of the real symbol has intertwining rank $r>1$: in order to optimize the test functions $f$   to violate the supposed upper bound (\ref{thm_T_bound}) for $s$ as large as possible,    one is naturally led to dilate one variable within the exponential sum, say $X_1,$ by a large parameter. This large dilation contributes large error terms to certain approximation arguments.  We overcome this by   using the notion of intertwining rank.
To assist the reader in tracking the main ideas of the method, we now present a series of heuristic computations that are  not rigorous, but simply emphasize numerology. The  remainder of the paper carries out each step rigorously.

\subsection{Heuristic overview}\label{sec_heuristic}
 
Let $\Phi_n(x_1,\ldots,x_n)$ be a non-negative Schwartz function with $\Phi_n (0)=1$ and $\hat{\Phi}_n$ supported in $[-1,1]^n.$ As in Theorem \ref{thm:main3}, we think of $R$ as a parameter that will go to infinity.
Define a test function 
\beq\label{intro_f_dfn} f(x) = \Phi_n(S \circ x) \sum_{\bstack{m \in \Z^n}{m_j \approx R/\Lambda_j}} e((\Lambda \circ m) \cdot x)
\eeq
for some parameters $S=(S_1,\ldots,S_n)$ and $\Lambda = (\Lambda_1,\ldots,\Lambda_n)$, with each $S_i,\Lam_i$ chosen later to be $1, R$ or a small power of $R$. Let $\|S\| = \prod S_j$ for the moment, and similarly for $\|\Lambda\|$. The Fourier transform $\hat{f}$ is supported in an annulus of radius $\approx R$ if each $S_j\ll R$, and $\|f\|_{H^s} \approx R^s \|S\|^{-1/2} R^{n/2}\|\Lambda\|^{-1/2}$, so that by normalizing appropriately, $f$ fits the hypotheses of Theorem \ref{thm:main3}.   For this test function,
\[ T_t^Pf(x)  = \frac{1}{(2\pi)^n} \int_{\R^n} \hat{\Phi}_n(\xi) e((S \circ \xi)\cdot x)
    \sum_{\bstack{m \in \Z^n}{m_j \approx R/\Lambda_j}} e((\Lambda \circ m) \cdot x)
        e(P(S \circ \xi + \Lambda \circ m)t)d\xi.\]
        For simplicity we temporarily assume $P$ is a homogeneous form of degree $k\geq 2$, defined in terms of coefficients and multi-indices by
        \[ P(y_1,\ldots,y_n) = \sum_{|\al| =k} c_{\al} y^\al.\]
Step 1: Use partial summation to remove all terms in the sum over $m$ that depend on $\xi$ (i.e. that are ``not arithmetic''); this replaces the sum over $m$ (up to an error term) by 
\beq\label{intro_arith}
\approx w(R/\Lambda_1,\ldots, R/\Lambda_n)  \sum_{\bstack{m \in \Z^n}{m_j \approx R/\Lambda_j}} e((\Lambda \circ m) \cdot x+P(\Lambda \circ m)t),
\eeq
in which the new sum is the ``arithmetic contribution'' while \beq\label{intro_w_dfn}
w(y_1,\ldots,y_n) =  e((P(S \circ \xi + \Lambda \circ y) - P(\Lambda \circ y))t) \eeq
is the ``weight'' that has been removed by partial summation. The weight $w(R/\Lambda_1,\ldots, R/\Lambda_n) $ contributes to the integral over $\xi$ a factor with a linear phase in $\xi$, namely $\approx e( S \circ \xi \cdot t \nabla P(\underline{R})),$ 
where $\underline{R}=(R,\ldots,R).$

Step 2: Use integration by parts to remove all 
terms in the phase of the integral over $\xi$ that are order 2 or higher in $\xi$ (up to an error term).

Step 3: After Step 2, one may immediately apply Fourier inversion to the remaining integral over $\xi$, so that the main contribution to $T_t^Pf(x)$ is a product of the arithmetic contribution and  
\[ \approx \Phi_n(S \circ (x + t \nabla P(\underline{R}))).\]
Then place constraints on  $x$ and $t$ so that $S \circ (x + t \nabla P(\underline{R})) \approx 0$, so that applying $\Phi_n(0)=1$ (and continuity of $\Phi_n$) implies that $\Phi_n(S \circ (x + t \nabla P(\underline{R})))\approx 1$ and hence
\beq\label{intro_T_sum}
|T_t^Pf(x)| \approx  |\sum_{\bstack{m \in \Z^n}{m_j \approx R/\Lambda_j}} e((\Lambda \circ m) \cdot x+P(\Lambda \circ m)t)|.
\eeq

Step 4: Construct a set of $x \in \R^n$ that is a positive proportion of $B_n(0,1)$ so that for each $x$ in the set there exists $t \in (0,1)$ for which the arithmetic contribution (\ref{intro_T_sum}) is large.  Up to some simple changes of variables, the set is a union of boxes centered at rationals $(a_1/q,a_2/q,\ldots,a_n/q)$ for primes $q \approx Q,$ where $Q$ is a small power of $R$ to be chosen later. 

Step 5: Optimize the choices of $S=(S_1,\ldots,S_n),$ $\Lambda = (\Lambda_1,\ldots,\Lambda_n)$, and $Q$,
subject to the constraints that the error terms in all previous steps are acceptable. 
 
To unravel the chain of dependencies that make these steps efficient and compatible, first consider Step 3, which requires that for each $j=1,\ldots,n$
\beq\label{t_relation_heuristic}
S_j (x_j + t R^{k-1}) \ll 1 \iff t \approx -\frac{x_j}{R^{k-1}} + O(  \frac{1}{R^{k-1}S_j}). \eeq
(This sketch assumes in particular that $\partial_1 P(\underline{R})\gg R^{k-1}$; to achieve this, we develop Lemma \ref{lem:coefficients}.)
 Given $(x_1,\ldots,x_n)$, if we choose $t$ to satisfy this for $x_1$, then the only way it can simultaneously satisfy it for $x_2,\ldots,x_n$   is  for $x_2,\ldots,x_n$ to all lie in $O(S_j^{-1})$ neighborhoods of $x_1$. This is too limiting in Step 4 unless we set $S_2=\cdots=S_n=1$, which we now do, so $S=(S_1,1,\ldots,1)$ and $\|S\|=S_1.$ From now on, because of the ``large'' rescaling factor $S_1$, the first coordinate $x_1$ will play a special role. 

Next consider Step 2, in which we use iterated integration by parts (coordinate by coordinate) to remove a ``weight'' from the integral that contains all terms in the phase that are order 2 or higher in $\xi$; this weight takes the approximate form 
\[ W(\xi_1,\ldots,\xi_n)  = e([P(S \circ \xi +  \underline{R}) - L_0(\xi) - L_1(\xi)]t)  = e(t \sum_{\bstack{|\be + \ga| = k}{|\be|\geq 2}}c_{\be + \ga}C(\be,\ga) (S \circ \xi)^\be \underline{R}^\ga),\]
in which $L_0$ (respectively $L_1$) represents terms in $P(S \circ \xi + \underline{R})$ that are order  0 (respectively order 1) in $\xi$, and $C(\be,\ga)$ are positive combinatorial constants. As usual, the error term when a weight is removed by integration by parts (or summation by parts) will be smaller if the weight is slowly-varying, and thus we must control the derivatives of $W$. The error accrued in Step 2 must be at most a small proportion of the main term (\ref{intro_T_sum}). This will be achieved if for each multi-index $\kappa \in \{0,1\}^n,$ for all $\xi \in \supp \hat{\Phi}_n \subseteq [-1,1]^n$,
\beq\label{intro_W_sum}
|\frac{\partial^{|\kappa|}}{\partial \xi^\kappa} W(\xi)| \ll 1 \iff t \sum_{\bstack{|\be + \ga| = k}{|\be| \geq 2, \be \geq \kappa}}c_{\be + \ga}C(\be,\ga) (S \circ \xi)^{\be - \kappa}S^\kappa \frac{\be!}{( \be - \kappa)!} \underline{R}^\ga \ll 1.
\eeq
From Step 3 we know that $t\approx R^{-(k-1)},$ so we require the sum above   to satisfy $\ll R^{k-1}$. Each term in the sum is roughly of size $S^\be \underline{R}^\ga = S_1^{\be_1} R^{k-|\be|}$ for some $|\be|\geq 2$. There are two scenarios: if $|\be|>\be_1$, this term is $\ll R^{k-1}$ as long as $S_1 \ll R$. If $\be_1 = |\be|$, which can only occur if $\be_1\geq 2,$ then this term is $\ll R^{k-1} (S_1^{\be_1}/R^{\be_1-1})$, which is $ \ll R^{k-1}$ as long as $S_1 \ll R^{\frac{\be_1 -1}{\be_1}},$ which is most restrictive when $\be_1=2$. Thus we  impose the condition $S_1 \ll R^{1/2}.$

Next consider Step 1, in which the error introduced by iterated partial summation must be at most a small proportion of the main term (\ref{intro_T_sum}). This will be achieved 
 if for each multi-index $\kappa \in \{0,1\}^n,$ for all $y$ with $y_j \approx R/\Lam_j$, 
\beq\label{intro_w_sum} |\frac{\partial^{|\kappa|}}{\partial y^\kappa} w(y)| \ll \Lambda^\kappa R^{-|\kappa|}
\iff t \sum_{\bstack{|\be + \ga| = k}{|\be| \geq 1, \ga \geq \kappa}}c_{\be + \ga}C(\be,\ga) (S \circ \xi)^{\be}  (\Lambda \circ y)^{\ga-\kappa} \Lambda^\kappa \frac{\ga!}{( \ga - \kappa)!}\ll \Lambda^\kappa R^{-|\kappa|}.
\eeq
Note that in contrast to (\ref{intro_W_sum}), in this case the phase in the weight $w(y)$ includes terms of order 1 in $\xi$, so that $|\be|=1$ is allowed in the sum immediately above. Each term in the sum immediately above is roughly of size $S^\be \Lambda^\kappa (\Lambda \circ y)^{\ga - \kappa } \approx S_1^{\be_1}  \Lambda^\kappa \underline{R}^{\ga - \kappa}\approx S_1^{\be_1} R^{k - |\be|} \cdot  \Lambda^\kappa R^{- |\kappa|}.$ Thus the  condition (\ref{intro_w_sum}) will be met, recalling $t\approx R^{-(k-1)}$, as long as $S_1^{\be_1} R^{k - |\be|} \ll R^{k-1}$ for all $|\be| \geq 1$. There are again several scenarios: if $|\be| > \be_1$ or if $\be_1 = |\be| \geq 2$, this term is $\ll R^{k-1}$ by arguing as in Step 2, under our assumption $S_1 \ll R^{1/2}$. The problem is that there is now also a third case,  with $\be_1 = |\be|=1$ in which case the requirement is asking that $S_1^{1} R^{k-1} \ll R^{k-1}$. These problematic terms can be seen as the contribution to the weight (\ref{intro_w_dfn}) that is varying the fastest with respect to $y$, namely the portion of the phase that is highest order in $y$ (total degree $k-1$) and linear in $\xi$. (We also provide an explicit example of such terms in (\ref{P_example}).) One way to achieve the requirement $S_1^{1} R^{k-1} \ll R^{k-1}$ is to  impose $S_1 \ll 1$, but this is inefficient in Step 5. The strategy we adopt is to modify the definition of the test function $f$ so that such terms never appear. 

Precisely, in the definition (\ref{intro_f_dfn}) of the test function $f$, we now restrict  the sum over $m \in \Z^n$  to sum only over those coordinates $m_j$ with the following property: for each multi-index $\al=(\al_1,\ldots,\al_n)$, if  $\al_j \geq 1$ and $\al_1 \geq 1$ then the coefficient $c_\al=0$ in the original polynomial $P(y)$. Equivalently, we define the sum to be only over those coordinates $m_j$ such that $X_j$ never appears in a monomial with $X_1$ in the original polynomial $P(y)$. 
(Equivalently, set $\Lambda_j \approx R$ for each $j$ such that $X_j$ appears in a monomial with $X_1$, and in all other coordinates take $\Lambda_j=L$, for $L$ a small power of $R$ to be chosen later.)

Because the exponential sum is a source of gain in Steps 4 and 5, we wish to sum over as many coordinates as possible, so depending on $P(y)$ we relabel coordinates in the beginning so that $X_1$ is the variable that appears in monomials with as few other coordinates as possible, say $X_2,\ldots, X_r$ with $r<n$; this is the motivation for defining intertwining rank. We now let   $m = (m_{r+1},\ldots,m_n)$, and only sum over these coordinates. The conclusion is that in place of (\ref{intro_T_sum}) we arrive at a main contribution of the form
\beq\label{intro_T_sum_mod}
|T_t^Pf(x)| \approx  |\sum_{\bstack{(m_{r+1},\ldots,m_n) \in \Z^{n-r}}{m_j \approx R/L}} e(m \cdot (Lx_{r+1},\ldots,Lx_n)+P(R/L,\ldots,R/L,m)L^k t)|.
\eeq
(Here we used homogeneity of $P$ of degree $k$; to make $R/L$ integral, see Remark \ref{remark_integer}.)

In Step 4, to construct a set of $x$ for which the above sum in (\ref{intro_T_sum_mod}) is ``large,'' imagine that each $Lx_j =a_j/q$ and $L^kt=a_1/q$ are rationals of prime denominator $q \approx Q$, so that the sum can be regarded as $\approx ((R/L)q^{-1})^{n-r} $ copies of a sum where each coordinate $m_j$ runs over a complete set of residues modulo $q$. Since $P(y_1,\ldots,y_n)$ is Dwork-regular, even after specializing the first $r$ variables, the remaining polynomial is well-behaved (Proposition \ref{prop:dwork_deligne}). A major feature of our argument shows that for a positive proportion  ($\gg q^{n-r+1}$) of choices of  $a_1, a_{r+1},\ldots, a_n$ the $(n-r)$-dimensional sum mod $q$ is of the optimal size $\approx q^{(n-r)/2}$ (Proposition \ref{prop:bound_T}). Hence at precisely such a point $x$ and for such a  $t,$
\beq\label{intro_T_size}
|T_t^Pf(x)| \approx \left(\frac{R}{Lq}\right)^{n-r} q^{(n-r)/2} \approx \left(\frac{R}{LQ^{1/2}}\right)^{n-r}.
\eeq

To achieve a similar result for a positive measure of $x$, we need to show this continues to hold for $Lx_j$ and $L^kt$ merely ``close'' to rationals with denominator $q$. Typically, deducing this from the case where they are precisely rationals would follow by applying partial summation to the sum in (\ref{intro_T_sum_mod}). The error incurred by partial summation will be too large if the ``weight'' removed involves   terms of the highest order in $m$. Thus we must choose $t$ so that (i): $L^kt=a_1/q$ precisely. Fortunately this is possible because of the wiggle room allowed in (ii): $L^kt \approx -L^kx_1/R^{k-1} + O(L^k/R^{k-1}S_1)$, from (\ref{t_relation_heuristic}) in Step 3. Given $x_1$ with $-L^kx_1/R^{k-1}$ in an interval of length $O(1/q)$ centered at $a_1/q$ we can always choose $t$ meeting both requirements (i) and (ii) as long as $Q^{-1} \ll L^k/R^{k-1}S_1,$ which we now assume. 

In contrast, the coordinates $Lx_{r+1},\ldots,Lx_n$ appear as coefficients of the lowest-order (linear) terms in $m$ so that   partial summation will contribute reasonable errors when we allow $Lx_j$ to vary in an interval around $a_j/q$. If the interval is of length $V$, say, the contributed error will be proportional to  $(R/L)^{n-r}V^{n-r}$ times the size of the main term, so we require $(R/L)^{n-r}V^{n-r}\ll 1$. At the same time, we want $V$ to be large in order for the boxes so constructed to cover a positive proportion of $(x_{r+1},\ldots,x_n) \in [0,1]^{n-r}$. In this regard the principle of simultaneous Dirichlet approximation in $n-r$ dimensions motivates the choice $V=1/qQ^{1/(n-r)} \approx Q^{-1-1/(n-r)}$ (see e.g. \cite[Appendix B]{Pie20}). Taken together, these two requirements force the condition $Q^{-1-1/(n-r)} \ll L/R.$ 

Cumulatively, this construction yields boxes in the coordinates $x_1,x_{n-r},\ldots,x_n$, each of measure $\approx  Q^{-1} V^{n-r}$, centered at around $\gg q^{n-r+1}$ rational tuples with denominator $q$ for each prime $q \approx  Q$. A naive calculation suggests  the total measure of the union of the boxes could be   $\approx (Q/\log Q) Q^{n-r+1} Q^{-1} (Q^{-1-1/(n-r)})^{n-r} \approx 1/\log Q$. Since the boxes can overlap significantly, a sophisticated justification is required although the conclusion agrees with the above (Proposition  \ref{prop:Omega}).

Upon reaching Step 5, these heuristics suggest that for the test function $f$ so constructed, with $\|f\|_{H^s} \approx R^s S_1^{-1/2} (R/L)^{(n-r)/2}$, on a set $x \in B_n(0,1)$ of measure $\gg 1/\log Q,$  
 $|T_t^Pf(x)| \gg (R/LQ^{1/2})^{n-r}$. This occurs under the constraints $S_1 \leq R^{1/2}$, $Q^{-1-1/(n-r)} \ll L/R$, 
$Q^{-1} \ll L^k/R^{k-1}S_1$. The claim of Theorem \ref{thm:main3} consequently holds for each $s$ such that 
\[ \frac{(\frac{R}{LQ^{1/2}})^{n-r}(\log Q)^{-1}}{S_1^{-1/2} (R/L)^{(n-r)/2}} \gg R^{s'}\]
for some $s'>s.$ Upon setting $L=R^\lambda, Q=R^\kappa, S_1= R^\sig$ this is equivalent to a linear condition on $\lambda,  \kappa, \sig$ and $s$ subject to linear constraints, and it can be optimized, which we do in detail in \S \ref{sec:parameters}.

 \subsection{Outline of the paper} In \S \ref{sec:dwork} we state and prove all the key properties of Dwork-regular polynomials we will use, including  that upon fixing one or more variables, the resulting polynomial is a Deligne polynomial over $\F_q$ (for all but finitely many primes $q$). In \S \ref{sec:exponential} we  prove upper and lower bounds on (complete and incomplete) exponential sums involving Deligne polynomials. In \S \ref{sec:reducing} we approximate $T_t^{P}f$, for appropriate test functions $f$, by an exponential sum. In \S \ref{sec:arithmetic} we define a  set of $x \in B_n(0,1)$ for which we can approximate this sum by complete exponential sums to which we can apply the arithmetic results of \S \ref{sec:exponential}. In \S \ref{sec:parameters} we then  optimize the choices of all parameters, thus proving Theorem \ref{thm:main3}. Finally, in \S \ref{sec_details_examples} we provide details on examples of Dwork-regular, indecomposable forms of arbitrary intertwining rank and degree, and remark on the codimension of such forms.

\section{Properties of Dwork-regular forms}\label{sec:dwork}
In this section we first gather three   algebraic properties of Dwork-regular forms that we will apply throughout the proof. Then in \S \ref{sec_gradient}, we prove a lower bound on a partial derivative of a Dwork-regular form, in \S \ref{sec_change_var} we  show that boundedness (or unboundedness) of the maximal operator is invariant under a $\GL_n(\Q)$ change of variables, in \S \ref{sec_cor} we verify Corollary \ref{corollary_decomp}, and finally in \S \ref{sec:dispersive} we remark that the PDE's we consider are dispersive.

It is convenient to work temporarily in a more abstract setting, and simply fix a field $L$, which could for example be $\Q$, $\R$ or a finite field $\F_q$. We will later call upon the lemmas we prove both in the setting of infinite fields such as $\Q$ and $\R$ and finite fields $\F_q$ for $q$ prime. Let $L$ be a field and $H\in L[X_1,...,X_n]$ a homogeneous polynomial of degree $k \geq 2$.
Then $H$ is nonsingular over $L$  if $H, \partial H /\partial X_1,\ldots, \partial H/ \partial X_n$ have no common zeroes in $\mathbb{P}_{\overline{L}}^{n-1}$ (correspondingly the projective hypersurface defined by $H=0$ in $\mathbb{P}_{\overline{L}}^{n-1}$ is nonsingular). 
Recall that $H$ is  Dwork-regular in the variables $X_1,...,X_n$ over $L$ if there are no solutions in $\mathbb{P}^{n-1}_{\overline{L}}$ to the simultaneous equations
\[
        H(X_1,\ldots,X_n)=0, \qquad X_i\frac{\partial H}{\partial X_i}(X_1,\ldots,X_n)=0 \qquad 1 \leq i \leq n.
\]
If $H$ is Dwork-regular over $L$,  then $H$ is  nonsingular over $L$. However, it can be that $H$ is nonsingular but not Dwork-regular: for example,   $X_1^k +\cdots + X_{n-1}^{k} + X_{n-1}X_n^{k-1}$ over $\Q$. 
However,   if the field $L$ is infinite, given any nonsingular form, there exists a   $\GL_n(L)$ change of variables under which the form becomes Dwork-regular (see \cite[pp. 67--68]{Dwo62} and \cite[Lemma 3.1]{Kat09}). (In fact, for a given form, there are many such changes of variables: the proof of \cite[Lemma 3.1]{Kat09} can be adapted to  show that the set of such elements   is dense  in $\GL_n(L)$.)
We quote from Katz in \cite[p. 1252]{Kat08}, that the archetypical Dwork-regular  polynomial $H$ would be of the form $H(X) = \sum_{i=1}^n X_i ^k + \tilde{H}(X)$, where $\tilde{H}$ is any polynomial of degree at most $k-1$. The antithesis to a Dwork-regular polynomial, when $n=2m$ is even and $L$ has odd characteristic, is something of the form $H(X) = \sum_{i=1}^{m} X_i X_{m+i}$.
 By Euler's identity, for a form $H$ of degree $k$, 
$k H = \sum_{i=1}^n X_i (\partial/\partial X_i)H,$ so that when discussing Dwork-regular forms it is natural to assume that $\mathrm{char} L \ndiv k$, if $L$ is finite.

In this section, we  first present an equivalent characterization of   Dwork-regularity that is easier to work with. (This property has previously been remarked in the context of \cite[Lemma 3.1]{Kat09}.)

\begin{lemma}\label{lem:dwork_equiv}
    Let $L$ be a field and let $H(X_1,...,X_m)\in L[X_1,...,X_m]$ be a homogeneous polynomial of degree $d \geq 2$. For every nonempty $S\subseteq \{1,...,m\}$, define $H_S:=H|_{X_j=0,j\not\in S}$. Then $H$ is Dwork-regular in $X_1,\ldots,X_m$ over $L$ if and only if  
        \begin{enumerate}
            \item for all $S$ with $|S|=1$, $H_S$ is not the zero polynomial, and
            \item for all $S$ with $|S|\geq 2$, the hypersurface defined by $H_S=0$ is nonsingular in $\mathbb{P}^{|S|-1}_{\Bar{L}}$ in the variables $X_i,i\in S$.
        \end{enumerate}
\end{lemma}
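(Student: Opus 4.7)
The approach is to translate the Dwork-regular condition into a statement about singular points of the restrictions $H_S$, grouped by the support $S \subseteq \{1,\ldots,m\}$ of a putative projective point. The key observation is that for any $P=[x_1:\cdots:x_m]\in\mathbb{P}^{m-1}_{\bar L}$ with support $S=\{i: x_i\neq 0\}$, the system defining Dwork-regularity simplifies: the equation $X_i(\partial H/\partial X_i)(P)=0$ is automatic for $i\notin S$, while for $i\in S$ it is equivalent to $(\partial H/\partial X_i)(P)=0$.

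First I would record the elementary identities used throughout: since $H_S$ is obtained from $H$ by setting $X_j=0$ for $j\notin S$, one has $H(P)=H_S((x_i)_{i\in S})$, and, because differentiation in $X_i$ commutes with substitution in the other variables, $(\partial H/\partial X_i)(P)=(\partial H_S/\partial X_i)((x_i)_{i\in S})$ for every $i\in S$. Consequently, $H$ is Dwork-regular if and only if for every nonempty $S$ there is no tuple $(x_i)_{i\in S}$ with all $x_i\neq 0$ satisfying $H_S=0$ together with $\partial H_S/\partial X_i=0$ for all $i\in S$.

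For the forward direction, I would argue by contrapositive. If (1) fails, say $H_{\{i\}}\equiv 0$, the point $P$ supported only at $i$ with $x_i\neq 0$ witnesses a failure of Dwork-regularity since $H_{\{i\}}(x_i)=0$ and the single partial $\partial H_{\{i\}}/\partial X_i$ is identically zero. If (2) fails for some $S$ with $|S|\geq 2$, there exists a point $Q=[y_i]_{i\in S}\in\mathbb{P}^{|S|-1}_{\bar L}$ where $H_S$ and all its partials in $X_i,i\in S$ vanish. Let $S'=\{i\in S: y_i\neq 0\}\subseteq S$, and lift $Q$ to $P\in\mathbb{P}^{m-1}_{\bar L}$ by padding with zeros. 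Then $H(P)=H_{S'}((y_i)_{i\in S'})=H_S(Q)=0$, and for each $j\in S'$, $(\partial H/\partial X_j)(P)=(\partial H_{S'}/\partial X_j)((y_i)_{i\in S'})=(\partial H_S/\partial X_j)(Q)=0$; while for $j\notin S'$, the coordinate $X_j(P)=0$ makes $X_j(\partial H/\partial X_j)(P)=0$ automatically. Thus $P$ violates Dwork-regularity. Conversely, suppose (1) and (2) hold, and let $P$ be a hypothetical Dwork-regular failure with support $S$. If $|S|=1$, say $S=\{i\}$, then $H_{\{i\}}=c_i X_i^d$ with $c_i\neq 0$ by (1), so $H_{\{i\}}(x_i)=c_i x_i^d\neq 0$, a contradiction. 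If $|S|\geq 2$, then $H_S$ and all its partials $\partial H_S/\partial X_j$ for $j\in S$ vanish at the point $[(x_i)_{i\in S}]\in\mathbb{P}^{|S|-1}_{\bar L}$, contradicting nonsingularity of $H_S=0$ from (2).

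The only subtle point—and the mild obstacle worth flagging—is ensuring, in the forward direction of (2), that a singular point $Q$ of $H_S=0$ whose coordinates are not all nonzero still produces a valid witness against Dwork-regularity; this is handled by passing to the support $S'\subsetneq S$ and checking that the coordinates outside $S'$ contribute trivially (via their vanishing) to the equations $X_j(\partial H/\partial X_j)(P)=0$, bypassing the need to control $\partial H_S/\partial X_j$ at $Q$ for indices $j\in S\setminus S'$. Aside from this bookkeeping, the proof is a direct unpacking of definitions.
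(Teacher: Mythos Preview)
Your proof is correct and follows essentially the same approach as the paper's: both arguments pivot on the support $S$ of a projective point and use the identities $H(P)=H_S((x_i)_{i\in S})$ and $(\partial H/\partial X_i)(P)=(\partial H_S/\partial X_i)((x_i)_{i\in S})$ for $i\in S$ to pass back and forth between the Dwork-regular system and singularities of $H_S$. The one extra step you take---passing to $S'=\{i\in S: y_i\neq 0\}$ when (2) fails---is harmless but unnecessary: since $Q$ is a singular point of $H_S$, the partials $\partial H_S/\partial X_j$ already vanish at $Q$ for \emph{all} $j\in S$, so one can work directly with $S$ as the paper does.
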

 
The first condition of Lemma \ref{lem:dwork_equiv} implies that a degree $d$ form that is Dwork-regular in $X_1,...,X_m$ necessarily contains a nonzero multiple of  each monomial $X_1^d,...,X_m^d$.  

Second, we verify that  if a form $H \in \Z[X_1,\ldots,X_m]$ is Dwork-regular  over $\Q$ in the variables $X_1,\ldots, X_m$, then  its reduction modulo $q$ is Dwork-regular over $\F_q$ in the variables $X_1,\ldots,X_m$  for all but finitely many primes $q$;  in particular this is true for all primes $q \geq K_1$ for a finite constant $K_1 = K_1(H)$. We can describe this abstractly over any field $L$ as follows:

\begin{lemma}\label{lemma_Dwork_reduced}
Let $L$ be a field and let $H(X_1,...,X_m)\in L[X_1,..,X_m]$ be Dwork-regular over $L$  in the variables $X_1,...,X_m$. Then there exists a finite set $S$ of finite places of $L$ such that for all finite places $\fp\not\in S$, the reduced polynomial $H\modd{\fp}$ is Dwork-regular over the residue field $\mathcal{O}_S/\fp$.
\end{lemma}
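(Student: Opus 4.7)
The plan is to apply the equivalent characterization of Dwork-regularity in Lemma \ref{lem:dwork_equiv}, which rewrites Dwork-regularity as a finite list of Zariski-open conditions on the coefficients of $H$, and then to observe that each nonzero element of $L$ lies in $\mathcal{O}_{\mathfrak{p}}^{\times}$ for all but finitely many finite places $\mathfrak{p}$. Taking the union of the ``bad'' places across all the conditions will yield the finite exceptional set $S$.

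First, I would handle condition (1) of Lemma \ref{lem:dwork_equiv}. Since $H$ is homogeneous of degree $d$, the restriction $H|_{X_j=0,\, j\neq i}$ equals $c_i X_i^d$, where $c_i\in L$ is the coefficient of $X_i^d$ in $H$. Condition (1) is then the statement that $c_1,\ldots,c_m$ are all nonzero in $L$. Let $S_1$ be the finite set of finite places of $L$ at which at least one of $c_1,\ldots,c_m$ fails to lie in $\mathcal{O}_{\mathfrak{p}}^{\times}$ (this is finite since each nonzero element of $L$ has nontrivial valuation at only finitely many places). For any $\mathfrak{p}\notin S_1$, the reductions $\overline{c}_1,\ldots,\overline{c}_m$ in the residue field are all nonzero, so condition (1) is inherited by $H\pmod{\mathfrak{p}}$.

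Next I would handle condition (2). For each subset $T\subseteq\{1,\ldots,m\}$ with $|T|\geq 2$, the form $H_T$ is a degree-$d$ form in $|T|$ variables, and the nonsingularity of the projective hypersurface $\{H_T=0\}$ over $\overline{L}$ is equivalent to the non-vanishing of its discriminant $\Delta(H_T)$, which is a universal integer polynomial in the coefficients of $H_T$ (these coefficients being a subset of the coefficients of $H$). By hypothesis and Lemma \ref{lem:dwork_equiv}, $\Delta(H_T)\neq 0$ in $L$ for every such $T$. Since there are only $2^m-m-1$ subsets to consider, let $S_2$ be the (finite) union over these $T$ of the places at which $\Delta(H_T)$ is not a unit. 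For any $\mathfrak{p}\notin S_2$, the reduced discriminant $\Delta(H_T\bmod \mathfrak{p})$ is nonzero in the residue field, so the reduced hypersurface is nonsingular over the algebraic closure of the residue field, giving condition (2) for $H\pmod{\mathfrak{p}}$.

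Setting $S=S_1\cup S_2$, for every finite place $\mathfrak{p}\notin S$ both conditions of Lemma \ref{lem:dwork_equiv} hold for $H\pmod{\mathfrak{p}}$, so $H\pmod{\mathfrak{p}}$ is Dwork-regular over $\mathcal{O}_S/\mathfrak{p}$. In the concrete case $L=\Q$ and $H\in\Z[X_1,\ldots,X_m]$ alluded to before the lemma, $S_1$ consists of primes dividing any of the $c_i$, $S_2$ consists of primes dividing any of the (nonzero integer) discriminants $\Delta(H_T)$, and one may simply take $K_1(H)$ larger than the maximum such prime. There is no real obstacle; the only non-elementary input is the existence of the universal discriminant $\Delta(H_T)$ of a homogeneous form, which is classical.
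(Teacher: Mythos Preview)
Your approach is correct for the intended application ($L=\Q$) but follows a genuinely different route from the paper's proof. The paper works directly with the defining ideal $I=(H,\,X_1\partial_1 H,\ldots,X_m\partial_m H)$: by projective Nullstellensatz, Dwork-regularity over $L$ gives $(X_1^d,\ldots,X_m^d)\subseteq I$, and after writing each $X_i^d$ explicitly as an $L[X]$-combination of the generators, one enlarges $S$ to absorb all denominators appearing in those combinations; reducing the identities modulo any $\mathfrak p\notin S$ and applying Nullstellensatz again gives Dwork-regularity over the residue field. Your argument instead invokes Lemma~\ref{lem:dwork_equiv} to replace Dwork-regularity by a finite list of nonsingularity conditions, and then uses the classical discriminant to encode each one as a single nonzero element of $L$. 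This has the advantage of being more explicit---you exhibit the bad primes concretely as divisors of the $c_i$ and of the $\Delta(H_T)$---whereas the paper's Nullstellensatz argument is characteristic-free but produces the bad set only implicitly.

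Two small points to tighten. First, you should also put into $S$ the finitely many places where some coefficient of $H$ has negative valuation, so that the reduction $H\bmod\mathfrak p$ (and hence $\Delta(H_T\bmod\mathfrak p)=\Delta(H_T)\bmod\mathfrak p$) is well-defined; you handle this implicitly in the $\Z$-coefficient case at the end but not in general. Second, your equivalence ``nonsingular $\Leftrightarrow$ $\Delta\neq 0$'' uses Euler's identity and so requires $\mathrm{char}\,L\nmid d$; in positive characteristic dividing $d$ the partials can have a common projective zero at which $H_T$ does not vanish, so $\Delta(H_T)$ can vanish for a nonsingular $H_T$. Since the paper only applies the lemma with $L=\Q$ this is harmless, but if you want the full generality of the lemma as stated you should either add the hypothesis $\mathrm{char}\,L\nmid d$ or replace the single discriminant by the finite set of integer polynomials cutting out the singular locus in the universal family over $\mathrm{Spec}\,\Z$.
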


We reserve the precise definitions of the notion of a finite place, a residue field, and the ring $\mathcal{O}_S$ to  \S \ref{sec:proof_of_lemma}. In the case when $L=\Q$, a finite place corresponds to a prime number, and the conclusion of the lemma is that once a finite number of ``bad'' primes are excluded, then for all remaining prime numbers $q$, the reduction of $H$ modulo $q$ is Dwork-regular over the finite field $\F_q$.
 
We next  recall that a polynomial $P\in \Z[X_1,...,X_m]$ of degree $d \geq 2$ is a \textit{Deligne} polynomial over a finite field $L$ of characteristic $q$ if
    \begin{enumerate}
        \item $q \nmid d$, and
        \item the hypersurface defined by the leading form  $P_d(X_1,\ldots,X_m)=0$  is nonsingular in $\mathbb{P}_{\overline{L}}^{m-1}$.
    \end{enumerate} 
(In the case that $m=1$, (2) is replaced by $P_d(X_1) \not\con 0.$ Recall that the  leading form of a polynomial is the homogeneous part of highest degree.)
A crucial fact we apply later is that after specializing one or more coordinates of a Dwork-regular form, the remaining polynomial is Deligne:

\begin{prop}\label{prop:dwork_deligne} Let $L$ be a finite field and let $H(X_1,...,X_m) \in L[X_1,...,X_m]$ be a homogeneous  polynomial of degree $d$  that is Dwork-regular over $L$ in the variables $X_1,...,X_m$, with $\mathrm{char} L \nmid d$.   Fix $1 \leq r \leq m-1$. Then for any constants $c_1,...,c_r\in L$, $H|_{X_1=c_1,...,X_r=c_r}$ is a Deligne polynomial in $X_{r+1},...,X_m$ over $L$.
 
\end{prop}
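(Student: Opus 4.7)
The plan is to identify the leading form of the specialized polynomial $P(X_{r+1},\ldots,X_m) := H|_{X_1=c_1,\ldots,X_r=c_r}$ and then read off the Deligne property directly from the characterization of Dwork-regularity given in Lemma \ref{lem:dwork_equiv}. First I would use homogeneity of $H$ to compute the leading form of $P$. Because every monomial $c\,X_1^{a_1}\cdots X_m^{a_m}$ of $H$ has total degree $d$, after substituting $X_i = c_i$ for $1 \leq i \leq r$ its contribution to $P$ has degree $a_{r+1}+\cdots+a_m = d-(a_1+\cdots+a_r) \leq d$, with equality if and only if $a_1 = \cdots = a_r = 0$. Hence the degree-$d$ part of $P$ is precisely $H|_{X_1=0,\ldots,X_r=0} = H_S$, where $S = \{r+1,\ldots,m\}$ in the notation of Lemma \ref{lem:dwork_equiv}.

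Next I would split into two cases according to $|S|=m-r$. If $r=m-1$, so $|S|=1$, part (1) of Lemma \ref{lem:dwork_equiv} guarantees $H_S \not\equiv 0$; being a homogeneous degree-$d$ polynomial in the single variable $X_m$, it must be a nonzero scalar multiple of $X_m^d$, so $P$ has degree $d$ and the one-variable form of the Deligne nonsingularity condition (namely $P_d(X_m) \not\equiv 0$) holds. If instead $r \leq m-2$, so $|S| \geq 2$, part (2) of Lemma \ref{lem:dwork_equiv} gives that the hypersurface $\{H_S = 0\}$ is nonsingular in $\mathbb{P}^{|S|-1}_{\overline L}$ in the variables $\{X_i : i \in S\}$; in particular $H_S$ is nonzero, so $P$ has degree $d$, and its leading form $H_S$ satisfies exactly the projective nonsingularity condition required of a Deligne polynomial.

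Finally, the remaining Deligne requirement, namely $\mathrm{char} L \nmid d$, is already part of the hypotheses, so combining the two cases above yields that $P \in L[X_{r+1},\ldots,X_m]$ is a Deligne polynomial of degree $d$. There is no real obstacle here: the only substantive content is the observation that specializing some variables to constants in a homogeneous form produces a polynomial whose leading form is obtained by setting those same variables to zero, which is immediate from homogeneity. Once this is in hand, Lemma \ref{lem:dwork_equiv} supplies exactly the two conditions (non-vanishing in the single-variable case, projective nonsingularity otherwise) that Deligne's definition demands.
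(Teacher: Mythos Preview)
Your proof is correct and follows essentially the same approach as the paper's: both identify the leading form of the specialized polynomial as $H_S$ with $S=\{r+1,\ldots,m\}$ (the paper via the bihomogeneous decomposition $H=\sum_i G_iF_{d-i}$, you via the equivalent monomial-degree argument) and then invoke Lemma~\ref{lem:dwork_equiv} to verify the Deligne conditions. The only cosmetic difference is that the paper deduces $\deg P=d$ from the presence of $X_j^d$ in $H$ (the remark after Lemma~\ref{lem:dwork_equiv}), while you deduce it from $H_S\not\equiv 0$; both are valid.
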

In particular, we remark that by Lemma \ref{lem:dwork_equiv}, the constants $c_i$ in Proposition \ref{prop:dwork_deligne} can be 0 in $L$. 
We now turn to the proof of these results.

\subsection{Proof of Lemma \ref{lem:dwork_equiv}}\label{sec:dwork_equiv}

   We suppose $H$ is not Dwork-regular and then show  either (1) or (2) is violated. For $H$ not Dwork-regular, there exists an $a=[a_1:\cdots:a_m]\in \mathbb{P}^{m-1}_{\Bar{L}}$ such that
     \[
            H(a)=0,\qquad (X_i \frac{\partial H}{\partial X_i})(a)=0 \text{ for } 1\leq i \leq m.
      \]
    In particular, $a_j\neq 0$ for some $1\leq j \leq m$ and so for this $j$, $\frac{\partial H}{\partial X_j}(a)=0$. Define the set $S=\{j:a_j\neq 0 \}\subseteq \{1,...,m\}$. If $|S|=1$ then $H_S$ is either identically zero or a monomial in one variable, say in $X_j$. Moreover, when evaluated at the point $a=[0:\cdots : a_j : \cdots :0]\in \mathbb{P}^{m-1}_{\Bar{L}}$,  the monomial in $a_j \neq 0$ satisfies $ H_S(a) = H(a)=0.$ Thus the coefficient of the monomial is zero, and $H_S \con 0$ so that (1) is violated.
    If on the other hand $|S|\geq 2$, say $S = \{ i_1,\ldots, i_{|S|}\}$ then let $b=[a_{i_1}:\cdots :a_{i_{|S|}}] \in \mathbb{P}^{|S|-1}_{\Bar{L}}$. Then upon regarding $H_S$ as a polynomial in $X_{i_1},\ldots, X_{i_{|S|}},$
  $
            H_S(b) = H(a)=0.
       $
   For each $i\in S$, when we evaluate at the point $b$ (or $a$ respectively),
        \[
            \frac{\partial H_S}{\partial X_i}(b)   =             \frac{\partial H}{\partial X_i}(a)    = 0.
        \]
  This produces a singular point on the projective hypersurface  $H_S=0$  in $\mathbb{P}^{|S|-1}_{\overline{L}},$
 violating (2).

    Finally, suppose $H$ is Dwork-regular; we will argue that (1) and (2) must hold, by contradiction. Indeed, suppose  either $H_S$ is identically zero for some $S$ with $|S|=1$ or $H_S=0$ is singular for some $S$ with $|S|\geq 2$. Write $S=\{i_1,...,i_{|S|}\}\subseteq \{1,...,m\}$. For the case $|S|=1$, define $a=[a_1:\cdots : a_m]\in \mathbb{P}^{m-1}_{\Bar{L}}$ by $a_j=1$ if $j\in S$ and 0 otherwise.  Note that $H(a)=H_S(a)=0$. For $i\not\in S$, the coordinate $X_i(a)=0$ while for $i\in S$, at the point $a$,
 \[
            \frac{\partial H}{\partial X_i} (a) = \frac{\partial H_S}{\partial X_i} (a) =0
\]
   since $H_S \con 0$. Consequently,
      \begin{align}\label{eqn:a_dwork}
           H(a)=0,\qquad (X_i \frac{\partial H}{\partial X_i})(a)=0 \text{ for } 1\leq i \leq m,
        \end{align}
        which violates Dwork-regularity of $H$, a contradiction.
  On the other hand, if  $|S|\geq 2$, let $b=[b_{i_1}:\cdots:b_{i_{|S|}}]\in \mathbb{P}^{|S|-1}_{\Bar{L}}$ be such that
 \[
            H_S(b)=0,\qquad  \frac{\partial H_S}{\partial X_i}(b)=0 \text{ for } i\in S.
\]
    Define $a=[a_1:\cdots:a_m]\in \mathbb{P}^{m-1}_{\Bar{L}}$ by $a_j=b_j$ if $j\in S$ and 0 otherwise. 
    Then $H(a)=H_S(b)=0$. Additionally, for  $i\not\in S$, the coordinate $X_i(a)=0$ while for $i\in S$,  $(\partial H/\partial X_i)(a)=(\partial H_S/\partial X_i)(b)=0$. Thus $a$ satisfies \eqref{eqn:a_dwork} and violates Dwork-regularity of $H$, a contradiction.

\subsection{Proof of   Lemma \ref{lemma_Dwork_reduced}}\label{sec:proof_of_lemma}
 Lemma \ref{lemma_Dwork_reduced}  follows by a standard type of argument, which applies a version of  Nullstellensatz; we provide the projective version we apply.
    \begin{lemma}\label{lem:nullstellensatz}
    Let $L$ be a field. Let $I\subseteq L[X_1,...,X_m]$ be a homogeneous ideal. Define $Z(I)=\{x\in \mathbb{P}_{\Bar{L}}^{m-1} : f(x)=0 \text{ for all homogeneous } f\in I\} \subseteq \mathbb{P}_{\Bar{L}}^{m-1}$. Then $Z(I)$ is the empty set if and only if $(X_1^d,...,X_m^d)\subseteq I$ for some $d$.
\end{lemma}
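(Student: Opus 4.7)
The plan is to reduce this lemma to the affine Hilbert Nullstellensatz via the standard construction of the affine cone. Let $V_a(I) := \{x \in \A^m_{\overline{L}} : f(x) = 0 \text{ for all } f \in I\}$ denote the affine zero locus of $I$. Because $I$ is homogeneous, if $x \in V_a(I)$ then $\lambda x \in V_a(I)$ for every $\lambda \in \overline{L}$, so $V_a(I)$ is a union of lines through the origin. The projection $V_a(I) \setminus \{0\} \to \mathbb{P}^{m-1}_{\overline{L}}$ has image exactly $Z(I)$, and therefore $Z(I) = \emptyset$ if and only if $V_a(I) \subseteq \{0\}$.

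First I would dispatch the easy direction. Suppose $(X_1^d,\ldots,X_m^d) \subseteq I$. Then any $[a_1:\cdots:a_m] \in Z(I)$ would force $a_i^d = 0$, and hence $a_i = 0$ for every $i$, contradicting the fact that a point of projective space has at least one nonzero homogeneous coordinate. Thus $Z(I) = \emptyset$.

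For the converse, assume $Z(I) = \emptyset$, so by the observation above $V_a(I) \subseteq \{0\}$. There are two cases. If $V_a(I) = \emptyset$, then the affine Nullstellensatz (applied to $I$ as an ideal in $\overline{L}[X_1,\ldots,X_m]$, using that $\overline{L}$ is algebraically closed) yields $1 \in I$, and the conclusion holds trivially with $d = 1$. Otherwise $V_a(I) = \{0\}$, and the affine Nullstellensatz gives $\sqrt{I} = I(\{0\}) = (X_1,\ldots,X_m)$. Hence each $X_i$ lies in $\sqrt{I}$, so there exists an integer $d_i \geq 1$ with $X_i^{d_i} \in I$. Taking $d = \max_i d_i$ delivers $(X_1^d,\ldots,X_m^d) \subseteq I$, as required.

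The proof is essentially bookkeeping around a classical result: the lemma is simply the projective Nullstellensatz. There is no genuine obstacle. The only conceptual point is translating between the projective zero set and the affine cone, after which the strong affine Nullstellensatz over $\overline{L}$ closes out both cases. One minor item to state carefully is that $I$ is being regarded as generating an ideal in $\overline{L}[X_1,\ldots,X_m]$ for the purpose of applying the Nullstellensatz, while the conclusion $(X_1^d,\ldots,X_m^d) \subseteq I$ is a statement about the original $I \subseteq L[X_1,\ldots,X_m]$; this is harmless since the witnesses $X_i^{d_i}$ lie in $I$ itself, not merely its extension.
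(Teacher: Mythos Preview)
Your proof is correct and follows essentially the same route as the paper's: pass to the affine cone $V_a(I)$, observe that $Z(I)=\emptyset$ forces $V_a(I)\subseteq\{0\}$, and then invoke the affine Nullstellensatz to obtain $X_i^{d_i}\in I$. The paper's argument is terser (it does not separate out the degenerate case $V_a(I)=\emptyset$ or flag the $L$-versus-$\overline L$ coefficient issue), but the substance is identical.
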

    \begin{proof}
    Suppose $Z(I)=\emptyset$ in $\mathbb{P}^{m-1}_{\Bar{L}}$. Define the affine set $Z_{\mathbb{A}}(I) = \{(a_1,...,a_m)\in \mathbb{A}^{m}_{\Bar{L}}: f(a)=0 \text{ for all homogeneous } f\in I\}$. Then $Z_{\mathbb{A}}(I) =(0,...,0)$ and so for each $i$ the monomial $X_i$ vanishes on $Z_{\mathbb{A}}(I) =(0,...,0)$. Then by affine Nullstellensatz  \cite[Theorem 1.5]{Lan02},  $X_i^{d_i}\in I$ for some $d_i$. For the other direction, suppose $(X_1^d,...,X_m^d)\subseteq I$ for some $d$. Let $x\in Z(I)$ so that $f(x)=0$ for all homogeneous $f\in I$. Then in particular the monomials $X_i^d$   vanish on $x$ and so $x_i=0$ for all $i$. This is a contradiction to $x$ belonging to $\mathbb{P}_{\overline{L}}^{m-1}$, so $Z(I) = \emptyset.$
    \end{proof}
 Now to prove Lemma \ref{lemma_Dwork_reduced}, let us first recall some terminology. For the field $L$, we will denote a finite place by $\mathfrak{p}$ and its associated valuation  by $v_\fp$. For example, in the case we will apply,  $L=\Q$ so if we pick a finite place  (prime number) $p$, then the associated valuation is $v_p(x) = \max\{a\in \Z : p^a|x\}$ for  $x \in \Q$; for example, $v_p(x) \geq 0$ for all primes $p$ precisely when $x$ is an integer. When working with polynomials with rational coefficients, it  can be convenient to multiply an identity of polynomials  by a sufficiently large integer (say $N$) to ``clear denominators;'' alternatively, we could work in an enlarged set of ``integers'' that include rational numbers with denominators only divisible by  primes $p|N$. For example, we could consider  rational numbers with denominators only divisible by powers of  $5$ and $7$; we  call the set of all such rationals $S$-integers for the set $S=\{5,7\}$.
 In general let $S$ be a finite set of finite places of a field $L$. An associated ring $\mathcal{O}_S$ called the $S$-integers is defined by  $\mathcal{O}_S = \{x\in L: v_\fp(x)\geq 0 \text{ for all finite places }\fp \not\in S\}$. Finally, for any finite place $\fp \not\in S$ we may consider the quotient $\mathcal{O}_S/\fp$, which is the residue field. In the case $L=\Q$ where we apply Lemma \ref{lemma_Dwork_reduced}, given a particular prime $p \not\in S$, $\Ocal_S/\fp = \Ocal_S/p\Ocal_S$ is isomorphic to $\mathcal{O}_L/\fp=\Z/p\Z \cong \F_p$ since the map $ \Ocal_S \rightarrow \Z/p\Z$ given by  $a/b \mapsto a\pmod p$ is surjective, and its kernel is $p\Ocal_S$.

 To prove the lemma, initially define $S$ to be the set of places $\fp$ such that either $v_\fp(c)<0$ for some coefficient $c$ of $H$ or $v_\fp(c)>0$ for all coefficients of $H$. Then $H\in \mathcal{O}_S[X_1,...,X_m]$ and hence $X_i \frac{\partial H}{\partial X_i} \in \mathcal{O}_S[X_1,...,X_m]$. Define the ideal
        \[
            I:=\left(H,X_1\frac{\partial H}{\partial X_1},...,X_m\frac{\partial H}{\partial X_m}\right) \subseteq \mathcal{O}_S[X_1,...,X_m].
        \]
    Since by assumption $H$ is Dwork-regular over $L$, $Z(I)=\emptyset$ in $\mathbb{P}^{m-1}_{\Bar{L}}$ where we view $I$ as an ideal in $L[X_1,...,X_m]$. Then by Lemma \ref{lem:nullstellensatz}, $(X_1^d,...,X_m^d)\subseteq I$ for some $d$. In particular for each $i$, there exist $Q_i,Q_{i,1},...,Q_{i,m}\in L[X_1,...,X_m]$ such that
       \[
            X_i^d = HQ_i + X_1\frac{\partial H}{\partial X_1}Q_{i,1} + \cdots + X_m\frac{\partial H}{\partial X_m}Q_{i,m}.
      \]
    Now we enlarge the set $S$ so that it includes places $\fp$ such that $v_\fp(c)<0$ for some coefficient of at least one of $Q_i,Q_{i,1},...,Q_{i,m}$. Then $(X_1^d,...,X_m^d)\subseteq I \subseteq \mathcal{O}_S[X_1,...,X_m]$. For $\fp \not\in S$,
        \[
            X_i^d \equiv HQ_i + X_1\frac{\partial H}{\partial X_1}Q_{i,1} + \cdots + X_m\frac{\partial H}{\partial X_m}Q_{i,m} \modd{\fp}
        \]
        and so $(X_1^d,...,X_m^d) \modd{\fp}\subseteq (H,X_1\frac{\partial H}{\partial X_1},...,X_m\frac{\partial H}{\partial X_m}) \modd{\fp}$; that is to say, working over the residue field $L'=\mathcal{O}_S/\fp$ and viewing the ideals now in $L'[X_1,\ldots,X_m],$ the inclusion $(X_1^d,...,X_m^d) \subseteq (H,X_1\frac{\partial H}{\partial X_1},...,X_m\frac{\partial H}{\partial X_m})$ holds. We apply Lemma \ref{lem:nullstellensatz} again, now with $L'=\mathcal{O}_S/\fp,$ to deduce that $H,X_1\frac{\partial H}{\partial X_1},...,X_m\frac{\partial H}{\partial X_m}$ have no common zeros in $\mathbb{P}^{m-1}_{\overline{\mathcal{O}_S/\fp}}$ and hence by definition $H$ is Dwork-regular over $\mathcal{O}_S/\fp$ in the variables $X_1,\ldots,X_m$.

\subsection{Proof of Proposition \ref{prop:dwork_deligne}}
 Let $c_1,...,c_{r}\in L$ be given, and let $G\in L[X_{r+1},\ldots,X_m]$ denote the polynomial $H(c_1,...,c_r,X_{r+1},\ldots,X_m)$.  Note that $G$ has degree $d:=\deg H$; by the remark following Lemma  \ref{lem:dwork_equiv}, $H$ must contain a nonzero multiple of $X_{j}^d$ for each $j$ and in particular for $r+1 \leq j \leq m$, so $\deg G=d$. In particular, $\mathrm{char} L \ndiv \deg G$. 
 Next,   $H$ can be written as
$
        H = G_dF_0 + G_{d-1}F_1 + \cdots + G_0F_d
$
where $G_i\in L[X_{r+1},\ldots,X_m]$ is homogeneous of degree $i$ and $F_i\in L[X_1,...,X_r]$ is homogeneous of degree $i$ (in particular $F_0 \con 1, G_0 \con 1$). The leading form of $G$ is then precisely $G_d(X_{r+1},\ldots,X_m)$, and $G_d= H_S$ for $S=\{r+1,\ldots,m\}$, in the notation of Lemma \ref{lem:dwork_equiv}. Thus the leading form of $G$ defines a nonsingular hypersurface in $\mathbb{P}_{\overline{L}}^{m-r-1}$ (or is not the zero polynomial, if $m-r=1$), by Lemma \ref{lem:dwork_equiv}, completing the proof.

 \subsection{A lower bound for a partial derivative}\label{sec_gradient}
When constructing   counterexamples, it will be important to find a lower bound for a partial derivative of the leading form $P_k$ of the polynomial symbol. (Explicitly, this  is used to guarantee that for each $x$ in a small neighborhood of the origin in $\R^n$, a choice of $t$ meets all the requirements of (\ref{eqn:t_constraints}) simultaneously.) As a motivating example, in the special case where the symbol has diagonal leading form $P_k(X) = X_1^k + \cdots + X_n^k,$ one immediately has $(\partial_1 P_k)(R,X_2,\ldots,X_n) \gg_k R^{k-1}$ for all $(X_2,\ldots,X_n) \in \R^{n-1}$, which suffices for our application; here and throughout the paper, $R$ is a large parameter that will tend to infinity at the end of the argument. If $P_k$ has higher intertwining rank, we must proceed differently; the following lemma provides an integral point where the partial derivative is sufficiently large.
\begin{lemma}\label{lem:coefficients}
  Let $P_k \in \Q[X_1,\ldots,X_n]$ be a Dwork-regular form of degree $k \geq 2$ with intertwining rank $r<n$; without loss of generality, say  $X_1$ intertwines with $X_1,X_2,...,X_r$ but not with $X_{r+1},\ldots,X_n$. 
  Then there exists a tuple $(M_1,...,M_r)\in \Z^r$, with  $M_i\geq 1$ for all $i$, such that for all $R \geq 1,$
   \[
        |(\partial_1 P_k)(M_1R,M_2R,...,M_rR,X_{r+1},\ldots,X_n)| \gg R^{k-1},
\]
    uniformly in $(X_{r+1},\ldots,X_n) \in \R^{n-r}.$
 \end{lemma}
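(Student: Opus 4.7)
The plan is to exploit the non-intertwining of $X_1$ with $X_{r+1},\ldots,X_n$ to collapse $\partial_1 P_k$ to a polynomial in only the first $r$ variables, and then to use Dwork-regularity to guarantee it is not identically zero.

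First, by the hypothesis that $X_1$ does not intertwine with any of $X_{r+1},\ldots,X_n$, we have $\partial^2 P_k/\partial X_1 \partial X_j \equiv 0$ for each $j \in \{r+1,\ldots,n\}$. Hence $\partial_1 P_k$ is independent of $X_{r+1},\ldots,X_n$, and defines a homogeneous polynomial
\[
G(X_1,\ldots,X_r) := (\partial_1 P_k)(X_1,\ldots,X_r,X_{r+1},\ldots,X_n) \in \Q[X_1,\ldots,X_r]
\]
of degree $k-1$. If I can locate integers $M_1,\ldots,M_r \geq 1$ with $G(M_1,\ldots,M_r) \neq 0$, then homogeneity gives $G(M_1 R,\ldots,M_r R) = R^{k-1} G(M_1,\ldots,M_r)$, and the uniformity in $(X_{r+1},\ldots,X_n)$ follows for free from the fact that $\partial_1 P_k = G$ does not involve those variables.

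So the remaining task reduces to showing (i) $G$ is not the zero polynomial, and (ii) $G$ takes a nonzero value at some point of $\Z_{\geq 1}^r$. For (i), I would apply Lemma~\ref{lem:dwork_equiv} with $S = \{1\}$: Dwork-regularity of $P_k$ forces $P_k|_{X_2=\cdots=X_n=0}$ to be nonzero, and being homogeneous of degree $k$ in $X_1$ alone it must equal $cX_1^k$ for some $c \neq 0$. Therefore $G(X_1,0,\ldots,0) = kcX_1^{k-1} \not\equiv 0$, so $G \not\equiv 0$. For (ii), a nonzero polynomial in $r$ variables cannot vanish on all of $\Z_{\geq 1}^r$: by induction on $r$, fix $M_1$ to be any positive integer outside the finitely many roots of the one-variable polynomial $G(X_1,0,\ldots,0)$; then $G(M_1,X_2,\ldots,X_r)$ is a nonzero polynomial in $r-1$ variables and the inductive step applies.

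There is no real obstacle here — the content of the lemma is the structural observation that non-intertwining of $X_1$ with $X_{r+1},\ldots,X_n$ makes $\partial_1 P_k$ a function of only $X_1,\ldots,X_r$, so the lower bound becomes a purely algebraic nonvanishing question handled by Dwork-regularity via Lemma~\ref{lem:dwork_equiv}. The implicit constant in $\gg R^{k-1}$ depends on $|G(M_1,\ldots,M_r)|$, which depends on $P_k$, consistent with the paper's convention that constants may depend on $P$.
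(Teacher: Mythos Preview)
Your proof is correct and follows the same path as the paper: use non-intertwining to see $\partial_1 P_k$ depends only on $X_1,\ldots,X_r$, extract $R^{k-1}$ by homogeneity, invoke Lemma~\ref{lem:dwork_equiv} with $S=\{1\}$ to get the $kcX_1^{k-1}$ term showing $G \not\equiv 0$, and then find an integer point in $\Z_{\geq 1}^r$ where $G \neq 0$. The only minor difference is that last step (the paper counts zeros in a box $[1,B]^r$, you iterate coordinate by coordinate); note that your induction sketch tacitly assumes $G(X_1,0,\ldots,0)\not\equiv 0$, which fails for an arbitrary nonzero polynomial but does hold for this specific $G$, so the argument goes through.
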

   
 Let $X'=(X_2,...,X_n)$ and rewrite $P_k$ in terms of its coefficients $c_\be$ as 
   \[
        P_k(X_1,\ldots, X_n) 
        =\sum_{j=0}^{k} X_1^{k-j} \sum_{|\beta|=j} c_\beta  X'^{\beta}
    \]
where $\beta$ is a multi-index $(\beta_2,...,\beta_n)$ with order $|\be| = \be_2 + \cdots + \be_n,$ and $X'^\beta=X_2^{\beta_2}\cdots X_n^{\beta_n}$. 
By the hypothesis that $X_1$ does not intertwine with $X_{r+1},...,X_n$, for each $|\be| <k,$ $c_\be=0$ for all $\be$ with $\be_\ell>0$ for some $\ell >r.$
  Consequently, the derivative $\partial_1 P_k$   is a function of $X_1,\ldots, X_{r}$ and is independent of $X_j$ for $j>r$:
    \begin{align}\label{eqn:partial_Pk}
       ( \partial_1 P_k)(X_1,\ldots,X_n) 
        = \sum_{j=0}^{k-1} (k-j) X_1^{k-j-1} \sum_{|\beta|=j} c_\beta  X_2^{\be_2}\cdots X_{r}^{\beta_{r}}.
    \end{align}
    Then for any value of a parameter $R \geq 1$, after plugging in any $(M_1,...,M_{r})\in \Z^{r}$, 
  \begin{align*}
        (\partial_1 P_k)(M_1R,\ldots,M_{r}R,X_{r+1},\ldots,X_n)&=\sum_{j=0}^{k-1} (k-j)(M_1R)^{k-j-1} \sum_{|\beta|=j} c_\beta   (M_2R)^{\beta_2}\cdots (M_{r}R)^{\beta_{r}} \\
        &=R^{k-1}\sum_{j=0}^{k-1} (k-j)M_1^{k-j-1} \sum_{|\beta|=j} c_\beta M_2^{\beta_2}\cdots M_{r}^{\beta_{r}}.
            \end{align*}
Thus we can conclude $|(\partial_1 P_k)(M_1R,...,M_{r}R,X_{r+1},\ldots,X_n)|\gg R^{k-1}$   (uniformly in $X_{r+1},\ldots,X_n$) as long as
  \[
        \sum_{j=0}^{k-1} (k-j)M_1^{k-j-1} \sum_{|\beta|=j} c_\beta M_2^{\beta_2}\cdots M_{r}^{\beta_{r}}  \neq 0.
 \]
From \eqref{eqn:partial_Pk}, we see that this condition is equivalent to 
$
        (\partial_1 P_k)(M_1,...,M_{r})\neq 0
$
where we view $\partial_1P_k$ as an element of $\Q[X_1,...,X_{r}]$.  Hence it remains to prove that there exists an integral point $(M_1,...,M_{r})\in \Z^{r}$ with $M_i\geq 1$ for all $i$ such that $(\partial_1 P_k)(M_1,...,M_{r})\neq 0$.

First we note that $\partial_1 P_k$ is not the zero polynomial in $X_1,\ldots, X_r$; this follows from Dwork-regularity, since by (1) of Lemma \ref{lem:dwork_equiv}, $P_k$ contains a term $cX_1^k$ for some nonzero $c \in \Q$. If $r=1$, $(\partial_1 P_k)(X_1)$ has at most $k-1$ roots, so the claim holds. For $r\geq 2$, by a trivial upper bound (see e.g. \cite[Lemma 10.1]{BCLP22} for a standard statement), for any $B\geq 1,$ there are at most $\ll B^{r-1}$ integral solutions in $[1,B]^r$ to $(\partial_1 P_k)(X_1,\ldots,X_r)=0.$ Since there are $B^r$ integral points in that box, there exists a sufficiently large $B$ such that there is an integral point, say $(M_1,\ldots,M_r) \in [1,B]^r$ with $(\partial_1 P_k)(M_1,\ldots,M_r)\neq 0.$ The lemma is proved.

 \subsection{Invariance under $\mathrm{GL}_n(\Q)$: nonsingular to Dwork-regular}\label{sec_change_var}
  Let $P\in \R[X_1,\ldots,X_n]$ be given, with leading form $P_k \in \Q[X_1,\ldots,X_n]$.   Now let $A \in \GL_n(\Q)$ and let $Q$ denote the polynomial after changing variables, i.e. $Q(\eta_1,...,\eta_n)=P(\xi_1,...,\xi_n)$ with $\xi_i = \sum_j a_{ij}\eta_j$ or equivalently $\eta=A^{-1}\xi$. (In particular, by \cite[Lemma 3.1]{Kat09}, if $P_k$ is nonsingular, there exists a choice of $A$ so that $Q(\eta_1,\ldots,\eta_n)$ is Dwork-regular over $\Q$ in $\eta_1,\ldots, \eta_n$.) 
  Then for any fixed $t>0,$
 \[
        T_t^Pf(x) 
       = \frac{1}{(2\pi)^n}\int_{\R^n} \hat{f}(\xi) e^{i(\xi \cdot x + P(\xi)t)} d\xi\\
        =\frac{1}{(2\pi)^n}\int_{\R^n} \hat{f}(A\eta) e^{i((A\eta) \cdot x + Q(\eta)t)}|A| d\eta.
\]
Now define a function $g$ such that $\hat{g}(\eta)  = |A|\hat{f}(A\eta)$ where $|A|$ denotes the determinant of $A$; equivalently $g(x) =  f((A^{-1})^Tx).$    Then 
 \[ (T_t^Pf)(x) = \frac{1}{(2\pi)^n}\int_{\R^n} \hat{g}(\eta) e^{i(\eta \cdot A^Tx + Q(\eta)t)}  d\eta = (T_t^Qg)(A^Tx).\]
Now let $\tau: \R^n \maps (0,1)$ be a measurable stopping-time function, so that proving $f(x) \mapsto \sup_{0<t<1}|T_t^Pf(x)|$ is bounded from $H^s$ to $L^1_{\mathrm{loc}}$ is equivalent to proving that  
$f(x) \mapsto  |T_{\tau(x)}^Pf(x)|$ is bounded from $H^s$ to $L^1_{\mathrm{loc}}$, independent of the function $\tau$. For a given stopping-time function $\tau$, the computation above shows that 
\[ \|T_{\tau(x)}^Pf(x)\|_{L^1(B_n(0,1))} = \int_{B_n(0,1)} |(T_{\tau(x)}^Qg)(A^Tx)|dx
= |A^T|^{-1} \int_{A^TB_n(0,1)} |(T_{\sig(u)}^Qg)(u)|du,\]
for the stopping-time $\sig(u) = \tau((A^T)^{-1}u).$
Thus 
\beq\label{TP_bdd} \|T_{\tau(x)}^Pf(x)\|_{L^1(B_n(0,1))} \leq C_s \|f\|_{H^s}
\eeq
is true for all $f \in H^s$, uniformly over all stopping-time functions, if and only if 
\beq\label{TQ_bdd}  \|T_{\sig(x)}^Qg(x)\|_{L^1(A^TB_n(0,1))} \leq C_s C'_A \|g\|_{H^s}
\eeq
is true for all $g \in H^s$, uniformly over all stopping-time functions. (Of course,    $\|g\|_{H^s} \ll_{A} \|f\|_{H^s} \ll_{A} \|g\|_{H^s}.$)
In particular, if we show for a given real $s>0$ that there is no constant $C_s$ such that 
\[\|\sup_{0<t<1}|T_t^Qg(x)|\|_{L^1(B_n(0,1))} \leq C_s    \|g\|_{H^s}\]
for all $g \in H^s$, then (\ref{TQ_bdd}) fails for all constants $C_s, C_A'$ and consequently (\ref{TP_bdd}) fails for all constants $C_s$. This invariance under $\GL_n(\Q)$ changes of variables shows that it is reasonable to study maximal operators for the class of Dwork-regular polynomial symbols in place of the class of nonsingular polynomial symbols, as we do. (The invariance demonstrated above holds for $\GL_n(\R)$ as well.)

 \subsection{Verification of Corollary \ref{corollary_decomp}}\label{sec_cor}
 Suppose as in the hypothesis that $P_k(X_1,\ldots,X_n)$ is decomposable over $\Q$, so that after an appropriate $\GL_n(\Q)$ change of variables, $P_k(X_1,\ldots,X_n) = Q_1(X_1,\ldots,X_m) + Q_2(X_{m+1},\ldots,X_n)$  holds for some $1 \leq m<n.$
 Note that  $P_k$ is nonsingular as a function of $X_1,\ldots,X_n$ if and only if $Q_1$ is nonsingular as a function of $X_1,\ldots, X_m$ and $Q_2$ is nonsingular as a function of $X_{m+1},\ldots,X_n$. By the disjointness of the variables in $Q_1$ and $Q_2$, we can apply $\GL_m(\Q)$ (respectively $\GL_{n-m}(\Q)$) changes of variables separately to $Q_1$ (respectively $Q_2$) so that both become Dwork-regular over $\Q$. This provides a block diagonal $\GL_n(\Q)$ transformation that makes $P_k$ Dwork-regular and still decomposed into a form in $X_1,\ldots,X_m$ and a second form in $X_{m+1},\ldots,X_n$. In particular, its intertwining rank remains bounded above by $r\leq \lfloor n/2 \rfloor$. Since the parameter $\delta(n,k,r)$ in Theorem \ref{thm:main2} is decreasing as a function of $r$, then  $\delta(n,k,r)\geq \delta(n,k,n/2)$ for $r\leq \lfloor n/2 \rfloor$, and this verifies the corollary.

\subsection{Dispersivity}\label{sec:dispersive}
A description of the broad principle of dispersion, in the context of an initial value problem like (\ref{PDE}),   can be found for example in \cite[\S 3.5]{Pal97} or \cite[Principle 2.1]{Tao06}. The following lemma verifies that for each real symbol considered in the main theorem, (\ref{PDE}) is  a dispersive PDE, in the sense of  the criterion presented in  \cite[Theorem 4.1]{KPV91}.
\begin{lemma}\label{lem:dispersive}
 Let $P(X_1,...,X_n)\in \R[X_1,...,X_n]$ be a polynomial of degree $k\geq 2$ such that its leading form $P_k(X_1,\ldots,X_n) \in \Q[X_1,\ldots,X_n]$ is Dwork-regular over $\Q$ in the variables $X_1,...,X_n$. Then $\nabla P_k(x_1,...,x_n) \neq 0$ for all  $(x_1,...,x_n)\in \R^n \setminus \{0\}$. Further, there exists a finite $M \in \mathbb{N}$ such that for each $i=1,...,n,$ for all $(c_1,...,c_{i-1},c_{i+1},...,c_n)\in \R^{n-1}$ and $c'\in \R$, the equation 
     \beq\label{P_sol}
         P(c_1,...,c_{i-1},x,c_{i+1},...,c_n)=c',  
        \eeq
   has at most $M$ solutions.
\end{lemma}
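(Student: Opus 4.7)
The plan is to handle the two claims separately; both follow quickly from the Dwork-regularity hypothesis.

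For the first claim, I would use that Dwork-regularity over $\Q$ implies nonsingularity over $\Q$ (as noted after \eqref{DR_dfn}), which means $P_k, \partial_1 P_k, \ldots, \partial_n P_k$ have no common zero in $\mathbb{P}^{n-1}_{\overline{\Q}}$. Since nonsingularity is a Nullstellensatz condition on the ideal generated by $P_k$ and its partial derivatives (characterized by that ideal containing a power of each $X_j$), the same conclusion holds in $\mathbb{P}^{n-1}_{\C}$. Now if $\nabla P_k(y) = 0$ for some $y \in \R^n \setminus \{0\}$, then Euler's identity $k P_k(y) = \sum_{i=1}^n y_i \partial_i P_k(y)$ forces $P_k(y) = 0$ as well (using $k \geq 2 \neq 0$ in $\R$). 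Viewing $y \in \C^n \setminus \{0\}$ then gives a common projective zero in $\mathbb{P}^{n-1}_{\C}$, contradicting nonsingularity.

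For the second claim, I would apply Lemma \ref{lem:dwork_equiv} with $S = \{i\}$ to deduce that $P_k|_{X_j = 0, \, j \neq i}$ is not the zero polynomial. Since $P_k$ is homogeneous of degree $k$, this restriction must equal $\alpha_i X_i^k$ for some $\alpha_i \neq 0$; equivalently, the coefficient of $X_i^k$ in $P_k$ is nonzero. Writing $P = P_k + (\text{terms of total degree} < k)$, the coefficient of $x^k$ in the single-variable polynomial $P(c_1, \ldots, c_{i-1}, x, c_{i+1}, \ldots, c_n)$ can only come from the $X_i^k$ monomial of $P_k$, and hence equals $\alpha_i \neq 0$ regardless of the choice of $(c_1, \ldots, c_{i-1}, c_{i+1}, \ldots, c_n) \in \R^{n-1}$. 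Therefore $P(c_1, \ldots, c_{i-1}, x, c_{i+1}, \ldots, c_n) - c'$ is a polynomial in $x$ of degree exactly $k$ for every $c' \in \R$, and so has at most $k$ real roots. Taking $M = k$ then establishes the claim.

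There is no serious obstacle here. The only conceptual point worth flagging is the passage from nonsingularity over $\overline{\Q}$ to the non-vanishing of $\nabla P_k$ on $\R^n \setminus \{0\}$, which is the field-independence remark above. Both parts exploit exactly the content of Lemma \ref{lem:dwork_equiv}(1): Dwork-regularity forces a pure $X_i^k$ term in $P_k$ for every $i$, which is then responsible both for $\partial_i P_k$ being nontrivial and for the one-variable restriction of $P$ retaining top degree $k$.
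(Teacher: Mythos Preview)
Your proof is correct and follows essentially the same route as the paper: deduce nonsingularity from Dwork-regularity and pass from $\overline{\Q}$ to $\R$ (the paper does this in one line by noting $P_k$ has rational coefficients; you make the Nullstellensatz/Euler steps explicit), and then use Lemma~\ref{lem:dwork_equiv}(1) to force a nonzero $X_i^k$ term so that each one-variable specialization has degree exactly $k$. The paper likewise takes $M=k$.
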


    \begin{proof} Since $P_k$ is Dwork-regular over $\Q$, then $P_k$ is nonsingular over $\Q$, so that $\nabla P_k(X_1,\ldots,X_n)=0$ has no nontrivial solutions over $\overline{\Q}$; since $P_k$ has rational coefficients, this implies that there are no nontrivial solutions over $\R$ either.   Fix an $1 \leq i \leq n.$   By the remark following Lemma \ref{lem:dwork_equiv}, the leading form $P_k$ contains a term that is a nonzero multiple of   $X_i^k$. Hence   $P(c_1,...,c_{i-1},x,c_{i+1},...,c_n)-c'$ contains a nonzero multiple of $x^k$, so that (\ref{P_sol}) has at most $k$ solutions. 
    \end{proof}

\section{Upper and lower bounds for exponential sums}\label{sec:exponential}

Now we prove three results about exponential sums involving a Deligne polynomial, which by Proposition \ref{prop:dwork_deligne} apply to Dwork-regular forms over $\Q$ with fixed variables, for all sufficiently large prime moduli. First, we prove in Proposition \ref{prop:bound_T} that complete exponential sums with a Deligne polynomial in the phase   are often ``large''. Second, we  prove in Proposition \ref{prop:bound_incomplete} that incomplete exponential sums are ``not too large'' (and in particular, only a logarithmic factor larger than complete exponential sums). Third, in Proposition \ref{prop:2.7} we approximate an exponential sum with real coefficients by complete exponential sums, up to an acceptable error.  

\subsection{A lower bound for many complete exponential sums}

\begin{prop}\label{prop:bound_T} Let $Q_k(X_1,...,X_m)\in \Z[X_1,...,X_m]$ be a polynomial of degree $k\geq 2$ and suppose that for all primes $q \geq K_1(Q_k)$, the reduction of $Q_k$ modulo $q$ is  a Deligne polynomial over $\F_q$.  Define for each prime $q$ and  $(a,b) \in \F_q \times \F_q^{m}$,
   \[
        \bfT(a,b;q):=\sum_{x \modd{q}^m} e\left(\frac{2\pi}{q}\left(a Q_k(x) + b \cdot x\right)\right).
  \]
Then there exist constants $0<\alpha_1, \alpha_2<1$ with $\alpha_2=\alpha_2(k,m)$, and a constant  $K_2(k,m)$ such that for every prime $q > \max\{k,  K_1(Q_k), K_2(k,m)\}$, there exist at least $\alpha_2 q^{m+1}$ choices of $(a,b)\in \F_q \times \F_q^{m}$ such that
    \begin{align}\label{eqn:T_large}
       \alpha_1 q^{m/2} \leq  |\bfT(a,b;q)| \leq (k-1)^mq^{m/2} .
    \end{align}
\end{prop}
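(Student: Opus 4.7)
The plan is to combine the Weil--Deligne upper bound with a second-moment (orthogonality) argument, which is a standard large-values strategy.

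First I would establish the upper bound. For $a \not\equiv 0 \pmod q$, the polynomial $aQ_k(x)+b\cdot x$ has leading form $aQ_k$, and since $a$ is a unit mod $q$ this form is still Deligne over $\F_q$ (the defining condition $q\nmid k$ and nonsingularity of the projective hypersurface $aQ_k=0$ both survive multiplication by a unit). An appeal to the Weil--Deligne bound (Lemma~\ref{lem:deligne}) then gives $|\bfT(a,b;q)|\leq (k-1)^m q^{m/2}$ for every $b\in\F_q^m$, provided $q>k$ and $q\geq K_1(Q_k)$. This is already the right-hand inequality in (\ref{eqn:T_large}).

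Next I would compute the second moment over $b$ for each fixed $a$. Expanding the square and using the orthogonality relation $\sum_{b\in\F_q^m} e(\tfrac{2\pi}{q} b\cdot(x-y)) = q^m \mathbf{1}[x\equiv y]$, one gets
\[
\sum_{b\in\F_q^m}|\bfT(a,b;q)|^2 \;=\; q^m\cdot\#\{(x,y)\in(\F_q^m)^2:x=y\} \;=\; q^{2m},
\]
for every $a\in\F_q$ (the value of $a$ is irrelevant here since $x=y$ kills the $Q_k$ terms). This says the RMS size of $|\bfT(a,b;q)|$ as $b$ varies is $q^{m/2}$, which is the scale at which the lower bound must live.

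Step three is the large-values extraction. Fix $a\not\equiv 0\pmod q$ and let $N_a$ denote the number of $b\in\F_q^m$ with $|\bfT(a,b;q)|\geq \alpha_1 q^{m/2}$. Splitting the second moment into the small-$|\bfT|$ and large-$|\bfT|$ parts, and using the Deligne bound on the latter,
\[
q^{2m} \;\leq\; \alpha_1^2 q^m\cdot q^m \;+\; (k-1)^{2m}q^m\cdot N_a,
\]
so $N_a\geq (1-\alpha_1^2)(k-1)^{-2m}q^m$. Choosing $\alpha_1$ small enough (e.g.\ $\alpha_1=1/2$) makes the right-hand side $\gg_{k,m} q^m$.

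Finally, summing over $a\in\F_q\setminus\{0\}$ produces at least $(q-1)(1-\alpha_1^2)(k-1)^{-2m}q^m$ pairs $(a,b)$ satisfying both inequalities in (\ref{eqn:T_large}); for $q$ larger than some threshold $K_2(k,m)$, this is at least $\alpha_2 q^{m+1}$ for a suitable $\alpha_2=\alpha_2(k,m)>0$. I do not expect a serious obstacle: the only thing to be careful about is confirming that $aQ_k$ remains Deligne for every unit $a$ mod $q$ (so that Lemma~\ref{lem:deligne} actually applies with the stated constant), and tracking that the thresholds $q>k$, $q\geq K_1(Q_k)$, and $q\geq K_2(k,m)$ suffice. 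Everything else is bookkeeping with orthogonality.
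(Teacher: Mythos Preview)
Your proof is correct and follows essentially the same second-moment-plus-Deligne strategy as the paper. The only organizational difference is that you fix $a$ and sum the second moment over $b$ alone (getting $\sum_b|\bfT(a,b;q)|^2=q^{2m}$ for each $a$), whereas the paper sums over all $(a,b)$ at once to obtain $\sum_{a,b}|\bfT(a,b;q)|^2=q^{2m+1}$ and then argues by contradiction; your version incidentally yields the slightly sharper statement that for \emph{every} unit $a$ there are $\gg_{k,m} q^m$ good choices of $b$, though the paper's cruder global count is all that is needed downstream.
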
 
In fact, we may take $\al_1=1/2$ and any $\al_2 \leq (1/8)(k-1)^{-2m}.$  
The Weil-Deligne bound is a key ingredient in the proof, which we now recall, following \cite[Thm. 8.4]{Del74} as stated in \cite[Thm. 11.43]{IwaKow04}.

\begin{lemma}\label{lem:deligne}   Let $f(X_1,...,X_m)\in \Z[X_1,...,X_m]$ be a Deligne polynomial of degree $k\geq 2$   over $\F_q$ for $q$ prime.  
Then
\[
        \left|\sum_{\mathbf{x}\in \F_q^m} e(\frac{2\pi}{q}f(x_1,...,x_m))\right|\leq (k-1)^{m}q^{m/2}.
 \]
\end{lemma}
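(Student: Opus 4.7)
My plan is to reduce the bound to the Grothendieck--Lefschetz trace formula combined with Deligne's Riemann hypothesis for varieties over finite fields, following the cohomological approach to exponential sums systematized by Deligne and Katz. Write $\psi \colon \F_q \to \C^\times$ for the additive character $\psi(x) = e(2\pi i x/q)$, let $\mathcal{L}_\psi$ denote the Artin--Schreier sheaf on $\mathbb{A}^1_{\F_q}$ (a lisse rank-$1$ $\overline{\Q}_\ell$-sheaf whose Frobenius trace function is $\psi$), and pull it back along $f \colon \mathbb{A}^m_{\F_q} \to \mathbb{A}^1_{\F_q}$ to obtain $f^\ast \mathcal{L}_\psi$ on $\mathbb{A}^m$. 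Writing $S$ for the sum in the statement, the Grothendieck--Lefschetz trace formula then gives
\[
S \;=\; \sum_{i=0}^{2m} (-1)^i \mathrm{Tr}\bigl(\mathrm{Frob}_q \bigm| H^i_c(\mathbb{A}^m_{\overline{\F}_q},\; f^\ast \mathcal{L}_\psi)\bigr).
\]

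The Deligne polynomial hypothesis has to be spent in two places. First, it forces cohomological concentration in the middle degree: $H^i_c = 0$ for $i \neq m$. This is the standard vanishing used, for instance, in Katz's \emph{Sommes exponentielles}; one compactifies $\mathbb{A}^m \hookrightarrow \mathbb{P}^m$ and studies the ramification of $f^\ast \mathcal{L}_\psi$ along the hyperplane at infinity. Because $q \nmid k$, the Artin--Schreier sheaf is genuinely wildly ramified along the divisor $\{f_k = 0\}$ inside the hyperplane at infinity, and nonsingularity of this divisor in $\mathbb{P}^{m-1}_{\overline{\F}_q}$ (i.e.\ the second half of the Deligne condition) allows a vanishing-cycles argument to kill $H^i_c$ outside the middle degree. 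Second, the same hypothesis gives a dimension bound $\dim H^m_c \leq (k-1)^m$; this is classically extracted via a Bezout/Euler characteristic calculation (one checks $|\chi_c(\mathbb{A}^m, f^\ast \mathcal{L}_\psi)| \leq (k-1)^m$, and the other $H^i_c$ vanish), or equivalently via a Swan conductor computation along the boundary.

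The bound then follows from Deligne's purity theorem (Weil II), which asserts that every Frobenius eigenvalue on $H^m_c(\mathbb{A}^m_{\overline{\F}_q},\; f^\ast \mathcal{L}_\psi)$ has complex absolute value exactly $q^{m/2}$, yielding
\[
|S| \;\leq\; \dim H^m_c \cdot q^{m/2} \;\leq\; (k-1)^m q^{m/2}.
\]
The genuine obstacle here is structural rather than computational: both the middle-degree concentration and the purity input are deep theorems of algebraic geometry with no elementary proof available for general $m \geq 2$. For this reason, in the present paper one cites the statement directly, as packaged in \cite{Del74} and summarized in Iwaniec--Kowalski, rather than reproving it from scratch.
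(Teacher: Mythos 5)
The paper offers no proof of this lemma: it cites the bound directly from Deligne, \cite[Thm.\ 8.4]{Del74} as restated in \cite[Thm.\ 11.43]{IwaKow04}, exactly as you conclude one should. Your cohomological sketch of what lies underneath is accurate in outline; the only small slip is the attribution to Weil~II --- the source actually cited is Deligne's Weil~I paper, where Theorem 8.4 is derived as an application of the Riemann hypothesis proved there, rather than via the later purity theorem of Weil~II.
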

The proof of the proposition is a mild variation on \cite[Prop. 2.2]{ACP23}.
We first claim that
    \beq\label{T_average}
            \sum_{\substack{a \modd{q}\\b \modd{q}^m}} |\bfT(a,b;q)|^2 = q^{2m+1}.
        \eeq
The left-hand side may be expanded as
\[ \sum_{x, \tilde{x}} \sum_{a\modd{q}} e\left(\frac{2\pi a}{q}(Q_k(x) - Q_k(\tilde{x})) \right) \sum_{b\modd {q}^{m}} e\left(\frac{2\pi  }{q}b \cdot (x-\tilde{x}) \right).
\]
The innermost sum vanishes unless $x_i\equiv \tilde{x}_i\modd{q}$ for all $1\leq i\leq m$; in this case the left-hand side evaluates to $q^{2m+1}$, as claimed. Next we observe that since $Q_k$ is Deligne over $\F_q$ of degree $k\geq 2$, so is $aQ_k(x)+b\cdot x$ for $a\neq 0  \in \F_q$, so we can apply Lemma \ref{lem:deligne} to bound $\bfT(a,b;q)$; for $a=0$ the sum vanishes unless $b=0$ and we address this below. Since (\ref{T_average}) shows that $\bfT(a,b;q)$ is of size $q^{m/2}$ on average, and the Weil-Deligne bound shows it cannot ever be much larger, we can deduce that most of the time it is not much smaller either. Precisely, suppose for a given pair $0<\alpha_1,\alpha_2<1$ there are   $<\alpha_2q^{m+1}$ choices of $(a,b) \modd{q}^{m+1}$ such that $|\bfT(a,b;q)|\geq \alpha_1 q^{m/2}$. Then
\[
        \sum_{\substack{a \modd{q}\\b \modd{q}^m}} |\bfT(a,b;q)|^2 \\
        =   q^{2m} 
        + \sum_{\substack{a \not\con 0\\ |\bfT(a,b;q)|\geq \alpha_1 q^{m/2}}} |\bfT(a,b;q)|^2 
        + \sum_{\substack{a\not\con 0\\ |\bfT(a,b;q)|< \alpha_1 q^{m/2}}} |\bfT(a,b;q)|^2.
  \]
Here the first term is the contribution from $a \con 0$.
The first sum on the right-hand side is $<\alpha_2 q^{m+1} ((k-1)^mq^{m/2})^2$; the second sum on the right-hand side is  $<q^{m+1} (\alpha_1 q^{m/2})^2$. Hence  
 \[
        \sum_{\substack{a \modd{q}\\b \modd{q}^m}} |\bfT(a,b;q)|^2
         <  
        (q^{-1}+ \alpha_2  (k-1)^{2m} +\alpha_1^2)q^{2m+1}  <(1/3+\alpha_2(k-1)^{2m}+\alpha_1^2)q^{2m+1}.
  \]
 This is $<q^{2m+1}$, contradicting (\ref{T_average}), for sufficiently small $\alpha_1,\alpha_2$; for example, we may take $\al_1=1/2$ and any $\al_2 \leq (1/4)(k-1)^{-2m}.$ For such $\al_1,\al_2$, there must then be $\geq \al_2 q^{m+1}$ choices of $(a,b) \modd{q}^{m+1}$ for which the left-hand inequality in (\ref{eqn:T_large}) holds. 
 
 Finally, the only case in which $|\mathbf{T}(a,b;q)|>(k-1)^mq^{m/2}$ is when $a=0 \in \F_q$ (in which case $\mathbf{T}(a,b;q)$ is a linear exponential sum which vanishes unless $b = 0 \in \F_q^m$). Thus aside from $(a,b)=(0,0)$, all those $(a,b)$ satisfying the lower bound in (\ref{eqn:T_large}) also satisfy the upper bound. We can summarize this by saying that at least $\al_2 q^{m+1}$ choices satisfy both bounds, with the modified choice $\al_2= (1/8)(k-1)^{-2m},$ as long as we assume $q$ is sufficiently large that $(1/4)(k-1)^{-2m}q^{m+1} \geq 2,$ which is true for all $q \geq K_2$, for an appropriate choice of $K_2=K_2(k,m).$
 The proposition is proved.

 \subsection{An upper bound for incomplete exponential sums}
 
 To show the maximal operator associated to $T_t^{P}f$ is large, we will repeatedly approximate integrals and sums by complete exponential sums. We first state general formulas for partial summation and partial integration; these are proved simply by iteration one coordinate at a time, and we omit the proofs. Given a subset $J\subseteq \{1,\ldots,n\}$, we define $I = \{1,\ldots,n\}\setminus J$ and use the notation $(\mathbf{x}_{(J)},\mathbf{x}_{(I)}) \in \R^{n}$ to indicate $\xbf_{(J)} \in \R^{|J|}$ is indexed by $j \in J$ and $\xbf_{(I)} \in \R^{n-|J|}$ is indexed by $j \in \{1,\ldots,n\}\setminus J.$  

\begin{lemma}\label{lem:partial_summation}
Let $a(\mathbf{m})$ be a sequence of complex numbers indexed by $\mathbf{m}=(m_1,...,m_n)\in \Z^n$. Let $h(\mathbf{y})$ be a $C^{(n)}$ function on $\R^n$ such that for every $\boldsymbol{\kappa}= (\kappa_1,...,\kappa_n) \in \{0,1\}^n$, there exists a positive real number $B_{\boldsymbol{\kappa}}$ such that for
    \begin{align*}
        \partial_{\boldsymbol{\kappa}} h(\mathbf{y}):=\frac{\partial^{\kappa_1+\cdots+\kappa_n}}{\partial y_1^{\kappa_1} \cdots \partial y_n^{\kappa_n}} h(\mathbf{y}),
    \end{align*}
we have
$
        \left|\partial_{\boldsymbol{\kappa}} h(\mathbf{y})\right| \leq B_{\boldsymbol{\kappa}}$
        uniformly in $\mathbf{y}\in [\mathbf{M},\mathbf{M}+\mathbf{N}],$
        where $\mathbf{M}=(M,M,\ldots,M)$ and $\mathbf{N} = (N_1,N_2,\ldots,N_n).$
Then
    \begin{align*}
        \sum_{\mathbf{M}\leq \mathbf{m}\leq \mathbf{M+N}} a(\mathbf{m}) e(h(\mathbf{m})) = e(h(\mathbf{M+N}))\sum_{\mathbf{M}\leq \mathbf{m}\leq \mathbf{M+N}} a(\mathbf{m})  + E
    \end{align*}
    where
        \begin{align*}
            |E|\ll_n \sup_{\substack{J\subseteq \{1,...,n\}\\|J| \geq 1}}\left\{ \prod_{j \in J}N_j \cdot \sup_{\mathbf{u}_{(J)}\leq \mathbf{N}_{(J)}}|A (\mathbf{u}_{(J)},\mathbf{N}_{(I)})| \cdot   \sup_{1\leq \ell \leq |J|} \sup_{\substack{\boldsymbol{\alpha_1},...,\boldsymbol{\alpha_\ell} \in \{0,1\}^n\\\boldsymbol{\alpha_1}+ \cdots +\boldsymbol{\alpha_\ell} = \boldsymbol{1}_J}} \prod_{i=1}^\ell B_{\boldsymbol{\alpha_i}} \right\},
        \end{align*}
        in which 
        \[A (\mathbf{u}_{(J)},\mathbf{N}_{(I)})=\sum_{\substack{\mathbf{M}_{(I)}\leq \mathbf{m}_{(I)} \leq \mathbf{M}_{(I)}+\mathbf{N}_{(I)}\\
        \mathbf{M}_{(J)}\leq \mathbf{m}_{(J)} \leq  \mathbf{M}_{(J)}+\mathbf{u}_{(J)}}} a(\mathbf{m}_{(J)},\mathbf{m}_{(I)}).\]
\end{lemma}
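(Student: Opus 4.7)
The plan is to induct on the dimension $n$, using classical one-dimensional Abel (partial) summation in each coordinate. The base case $n=1$ is immediate from Abel summation: writing $A(u) = \sum_{M \leq m \leq M+u} a(m)$, one has
\[
\sum_{M \leq m \leq M+N} a(m) e(h(m)) = A(N)\, e(h(M+N)) \;-\; \sum_{M \leq m < M+N} A(m-M)\bigl(e(h(m+1)) - e(h(m))\bigr),
\]
and the mean value theorem gives $|e(h(m+1)) - e(h(m))| \leq B_{(1)}$, so the error is bounded by $N \cdot \sup_u |A(u)| \cdot B_{(1)}$. This matches the stated bound with $J = \{1\}$, $\ell = 1$, $\boldsymbol{\alpha}_1 = (1)$.

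\textbf{Inductive step.} Assume the lemma in dimension $n-1$. Freeze $(m_1,\ldots,m_{n-1})$ and apply one-dimensional Abel summation in $m_n$. The resulting main term has the form $e(h(\mathbf{m}', M+N_n)) \sum_{m_n} a(\mathbf{m}', m_n)$, which I view as an $(n-1)$-dimensional sum against the function $\mathbf{y}' \mapsto h(\mathbf{y}', M+N_n)$; the inductive hypothesis applies directly and yields $e(h(\mathbf{M+N}))\sum_{\mathbf{m}} a(\mathbf{m})$ plus errors indexed by nonempty $J \subseteq \{1,\ldots,n-1\}$. The error from the $m_n$-step has the schematic form $\sum_{u_n < N_n} \tilde{A}(\mathbf{m}', u_n)\bigl[e(h(\mathbf{m}', u_n+1)) - e(h(\mathbf{m}', u_n))\bigr]$, and one further applies the $(n-1)$-dimensional version in the remaining coordinates, producing errors indexed by nonempty $J \subseteq \{1,\ldots,n\}$ with $n \in J$. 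Taking the union of the two types yields the full supremum over all nonempty $J \subseteq \{1,\ldots,n\}$.

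\textbf{Main obstacle.} The delicate part is the bookkeeping of the partition structure $\boldsymbol{\alpha}_1 + \cdots + \boldsymbol{\alpha}_\ell = \boldsymbol{1}_J$. Each Abel step in a coordinate $i \in J$ introduces a difference $e(h(\mathbf{y}+\mathbf{e}_i)) - e(h(\mathbf{y}))$ that may be handled in two ways: either bounded by $B_{\mathbf{e}_i}$ in isolation, which creates a new factor $\boldsymbol{\alpha}_{\ell+1} = \mathbf{e}_i$; or, when it arose as a mixed difference carrying coordinates $S$ from earlier steps, absorbed into an existing multi-index $\boldsymbol{\alpha}_j \mapsto \boldsymbol{\alpha}_j + \mathbf{e}_i$ via the identity $e(h(\mathbf{y}+\mathbf{e}_i)) - e(h(\mathbf{y})) = \int_0^1 \partial_i e(h(\mathbf{y}+t\mathbf{e}_i))\,dt$ together with Fa\`a di Bruno's formula applied to $e(h)$. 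Iterating over all $i \in J$ produces exactly all ordered partitions of $\boldsymbol{1}_J$ into multi-indices $\boldsymbol{\alpha}_1,\ldots,\boldsymbol{\alpha}_\ell \in \{0,1\}^n$, each contributing one factor $B_{\boldsymbol{\alpha}_i}$, which explains the supremum form in the lemma. The prefactor $\prod_{j \in J} N_j \cdot \sup_{\mathbf{u}_{(J)}} |A(\mathbf{u}_{(J)}, \mathbf{N}_{(I)})|$ is then simply the trivial volume estimate: $J$ records the coordinates in which differences have been taken (each contributing length $N_j$ and a partial sum up to some $u_j$), while $I$ records the coordinates in which the original summation interval remains intact.
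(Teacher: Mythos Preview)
Your approach is the one the paper intends: the authors say only that the lemma is ``proved simply by iteration one coordinate at a time'' and omit all details. Your identification of iterated Abel summation together with the Fa\`a di Bruno--type expansion of $\partial_{\boldsymbol{1}_J}e(h)$ (which is what generates the set-partition structure $\boldsymbol{\alpha}_1+\cdots+\boldsymbol{\alpha}_\ell=\boldsymbol{1}_J$ in the error bound) is exactly right.

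There is one wrinkle in your inductive packaging worth flagging. After the Abel step in $m_n$, the error term carries the weight $\Delta_n e(h(\mathbf{m}',\cdot))=e(h(\mathbf{m}',\cdot+1))-e(h(\mathbf{m}',\cdot))$, which is \emph{not} of the form $e(\tilde h(\mathbf{m}'))$ for any single real phase $\tilde h$; hence the $(n-1)$-dimensional statement of the lemma does not apply to it verbatim, and ``apply the $(n-1)$-dimensional version in the remaining coordinates'' is not literally an invocation of the induction hypothesis. The clean fix is the one you effectively describe in your ``main obstacle'' paragraph: rather than invoking the induction hypothesis on the error, iterate one-variable Abel summation in all $n$ coordinates \emph{before} estimating anything, obtaining $2^n$ terms indexed by $J\subseteq\{1,\ldots,n\}$ of the shape
\[
(-1)^{|J|}\sum_{\mathbf{u}_{(J)}<\mathbf{N}_{(J)}}A(\mathbf{u}_{(J)},\mathbf{N}_{(I)})\,(\Delta_J e(h))(\mathbf{M}+(\mathbf{u}_{(J)},\mathbf{N}_{(I)})),
\]
and only then bound $|\Delta_J e(h)|\le\sup|\partial_{\boldsymbol{1}_J}e(h)|$ via the chain rule for $e^{ih}$, which yields precisely $\sum_{\pi}\prod_{S\in\pi}|\partial_{\boldsymbol{1}_S}h|\ll_n\sup_{\ell}\sup_{\boldsymbol{\alpha}_1+\cdots+\boldsymbol{\alpha}_\ell=\boldsymbol{1}_J}\prod_i B_{\boldsymbol{\alpha}_i}$. (Equivalently, one can run the induction with a strengthened hypothesis allowing a general $C^{(n)}$ weight $g$ with $|\partial_{\boldsymbol{\kappa}}g|\le B'_{\boldsymbol{\kappa}}$, then specialize $g=e(h)$ at the end.) With that adjustment your argument is complete and coincides with the paper's intended proof.
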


\begin{lemma}\label{lem:partial_integration} Let $a<b$ be real numbers. Let $f(\mathbf{t})$ be an integrable function supported on $[a,b]^n$ and let $h(\mathbf{t})$ be a $C^{(n)}$ function on $\R^n$ such that for every $\boldsymbol{\kappa} = (\kappa_1,...,\kappa_n)\in \{0,1\}^n$, there exists a positive real number $B_{\boldsymbol{\kappa}}$ such that
$
        \left|\partial_{\boldsymbol{\kappa}} h(\mathbf{t})\right| \leq B_{\boldsymbol{\kappa}},
$ uniformly in $\mathbf{t} \in [a,b]^n.$
Then
    \begin{align*}
          \int_{[a,b]^n} f(\mathbf{t}) e(h(\mathbf{t})) d\mathbf{t} = e(h(\mathbf{b}))  \int_{[a,b]^n} f(\mathbf{t})  d\mathbf{t} + E
    \end{align*}
where
    \begin{align*}
        |E|\ll_n  &        \sup_{\substack{J\subseteq \{1,...,n\}\\|J| \geq 1}} \left \{(b-a)^{|J|}\sup_{\mathbf{u}_{(J)} \leq \mathbf{b}_{(J)}} |F(\mathbf{u}_{(J)},\mathbf{b}_{(I)})| \cdot  \sup_{1\leq \ell \leq |J|} \sup_{\substack{\boldsymbol{\alpha_1},...,\boldsymbol{\alpha_\ell} \in \{0,1\}^n\\\boldsymbol{\alpha_1}+ \cdots +\boldsymbol{\alpha_\ell} = \boldsymbol{1}_J}} \prod_{i=1}^\ell B_{\boldsymbol{\alpha_i}} \right\},
    \end{align*}
    in which 
    \[F(\mathbf{u}_{(J)},\mathbf{b}_{(I)})
        =\int_{[a,b]^{|I|}}  \int \cdots \int_{[a,u_{j}],j\in J} f(\mathbf{t}_{(J)},\mathbf{t}_{(I)}) d\mathbf{t}_{(J)}d\mathbf{t}_{(I)}.\]
\end{lemma}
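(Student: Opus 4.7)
The plan is to establish a multi-dimensional integration by parts (IBP) identity by iterating the 1D IBP formula one coordinate at a time, then bound each resulting boundary term using Faà di Bruno's formula. For the base case $n=1$, set $F(u) := \int_a^u f(s)\,ds$ so that $F(a)=0$; under the convention $e(t)=e^{it}$, classical IBP gives
\[
\int_a^b f(t)\,e(h(t))\,dt \;=\; F(b)\,e(h(b)) \;-\; i\int_a^b F(t)\,h'(t)\,e(h(t))\,dt,
\]
whose first term equals $e(h(b))\int_a^b f\,dt$ (the main term) and whose second term is bounded in absolute value by $(b-a)\,\sup_{u}|F(u)|\cdot B_{(1)}$, matching the stated $|E|$-bound for $J=\{1\}$.

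For general $n$, define the $n$-fold partial antiderivative $F(\mathbf{u}) := \int_{\prod_j[a,u_j]} f(\mathbf{t})\,d\mathbf{t}$, which vanishes whenever any $u_j=a$ and whose mixed partial recovers $f$; this agrees with the $F$ appearing in the statement, with $I$-coordinates frozen at $b$ and $J$-coordinates ranging. Iterating 1D IBP in each of the $n$ coordinates in turn — using the vanishing of $F$ on lower faces to cancel half of the boundary contributions at each step — yields the identity
\[
\int_{[a,b]^n} f(\mathbf{t})\,e(h(\mathbf{t}))\,d\mathbf{t} \;=\; \sum_{J \subseteq \{1,\ldots,n\}} (-1)^{|J|}\! \int_{[a,b]^{|J|}}\! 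F(\mathbf{t}_{(J)},\mathbf{b}_{(I)})\,[\partial_{\mathbf{1}_J} e(h)](\mathbf{t}_{(J)},\mathbf{b}_{(I)})\,d\mathbf{t}_{(J)},
\]
where $I=\{1,\ldots,n\}\setminus J$ and $\partial_{\mathbf{1}_J}$ denotes $\prod_{j\in J}\partial/\partial t_j$. The $J=\emptyset$ contribution is $F(\mathbf{b})\,e(h(\mathbf{b})) = e(h(\mathbf{b}))\int_{[a,b]^n}f\,d\mathbf{t}$, the claimed main term. For each $J\neq\emptyset$, Faà di Bruno's formula (with outer function $u\mapsto e^{iu}$) expands
\[
\partial_{\mathbf{1}_J}\,e(h)(\mathbf{t}) \;=\; e(h(\mathbf{t}))\sum_{\pi\in\Pi(J)} i^{|\pi|}\prod_{B\in\pi}\partial_B h(\mathbf{t}),
\]
where $\Pi(J)$ denotes the set of set-partitions of $J$. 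Bounding each $|\partial_B h|\leq B_{\mathbf{1}_B}$ and using $|\Pi(J)|\leq \mathrm{Bell}(n)\ll_n 1$ gives $|\partial_{\mathbf{1}_J} e(h)| \ll_n \sup_{\pi\in\Pi(J)}\prod_{B\in\pi}B_{\mathbf{1}_B}$, which is precisely the supremum in the statement under the identification $\boldsymbol{\alpha_i} = \mathbf{1}_{B_i}$ (where $\{B_1,\ldots,B_\ell\}$ is a partition of $J$ and $\boldsymbol{\alpha_1}+\cdots+\boldsymbol{\alpha_\ell}=\mathbf{1}_J$). Integrating over $[a,b]^{|J|}$ in absolute value and summing over the $2^n-1$ nonempty subsets $J$ produces the stated bound on $|E|$.

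The main technical subtlety — and the reason the stated bound features a supremum over all set-partitions of $\mathbf{1}_J$ rather than only the ``finest'' product $\prod_{j\in J} B_{e_j}$ — is that iterated 1D IBP naturally generates higher-order mixed partials of $h$ through the product rule: if IBP is applied in coordinate $t_j$ to an error integrand already carrying a factor $h_{j'}\,e(h)$, the derivative $\partial_j[h_{j'}\,e(h)] = h_{jj'}\,e(h) + i\,h_{j'}h_j\,e(h)$ produces both a ``coarser'' partition contribution (the $h_{jj'}$ term) and a ``finer'' one (the $h_{j'}h_j$ term). Faà di Bruno's formula is the systematic accounting of all such contributions across the $2^n$ branches and the $\mathrm{Bell}(|J|)$ set-partitions; everything else in the argument is elementary bookkeeping and the $n=1$ IBP formula.
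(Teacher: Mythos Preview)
Your proof is correct and follows exactly the approach the paper indicates: the paper omits the proof of this lemma, stating only that it is ``proved simply by iteration one coordinate at a time,'' and your argument fills in precisely those details. The iterated one-variable integration-by-parts identity you state is right (the vanishing of the antiderivative $F$ on the lower faces kills the $t_j=a$ boundary terms at each step), and your use of Fa\`a di Bruno to expand $\partial_{\mathbf{1}_J}e(h)$ as a sum over set-partitions of $J$ correctly accounts for the supremum over decompositions $\boldsymbol{\alpha_1}+\cdots+\boldsymbol{\alpha_\ell}=\mathbf{1}_J$ appearing in the statement.
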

 
We bound an incomplete exponential sum by completing the sum and applying the Weil bound:
\begin{prop}\label{prop:bound_incomplete} Let $Q_k(X_1,...,X_n)\in \Z[X_1,...,X_n]$ be a Deligne polynomial of degree $k\geq 2$ over $\F_q$ for a prime $q$.   Let $J\subseteq \{1,...,n\}$ and $I=\{1,...,n\}\setminus J$. Then for $\Hbf=(H_1,...,H_n)$ with $1\leq H_i \leq q$,
\[ \left|\sum_{\substack{\mathbf{1}_{(J)} \leq \bfm_{(J)}\leq \mathbf{q}_{(J)},\\\mathbf{1}_{(I)} \leq \bfm_{(I)}\leq \mathbf{H}_{(I)}}} e\left(\frac{2\pi}{q} Q_k(\bfm_{(J)},\bfm_{(I)})\right)\right| \ll_{k,n} q^{n/2}(\log q)^{|I|} .
       \]
\end{prop}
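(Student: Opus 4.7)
The approach is standard completion of incomplete sums via discrete Fourier analysis on $\Z/q\Z$, combined with the Weil--Deligne bound (Lemma \ref{lem:deligne}). First, for each coordinate $i \in I$ with $1 \leq H_i \leq q$, I would use the Fourier identity
\[ \mathbf{1}_{\{1 \leq m \leq H_i\}}(m) = \frac{1}{q} \sum_{a=0}^{q-1} S_{H_i}(a) \, e\Bigl(\frac{2\pi}{q} a m\Bigr), \qquad S_{H_i}(a) := \sum_{h=1}^{H_i} e\Bigl(-\frac{2\pi}{q} a h\Bigr), \]
valid for $m$ reduced modulo $q$, to replace the incomplete range $1 \leq m_i \leq H_i$ with a complete range modulo $q$ paired with an additional linear phase in the $i$-th variable. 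Applying this for every $i \in I$ converts the sum in question into
\[ \frac{1}{q^{|I|}} \sum_{\bfa_{(I)} \bmod q} \Bigl(\prod_{i \in I} S_{H_i}(a_i)\Bigr) \sum_{\bfm \bmod q^n} e\Bigl(\frac{2\pi}{q}\bigl(Q_k(\bfm) + \bfa_{(I)} \cdot \bfm_{(I)}\bigr)\Bigr), \]
where $\bfa_{(I)} \cdot \bfm_{(I)} = \sum_{i \in I} a_i m_i$.

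Next, I would observe that shifting by the linear form $\bfa_{(I)} \cdot \bfm_{(I)}$ does not alter the leading form $Q_k$ of degree $k$, so $Q_k(\bfm) + \bfa_{(I)} \cdot \bfm_{(I)}$ is still a Deligne polynomial of degree $k$ over $\F_q$. Lemma \ref{lem:deligne} therefore bounds the inner complete sum by $(k-1)^n q^{n/2}$, uniformly in $\bfa_{(I)}$.

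The remaining task is to estimate the sum of the absolute values of the Fourier coefficients $S_{H_i}(a)$. The geometric-series bound gives $|S_{H_i}(0)| = H_i \leq q$ and, for $a \in \{1, \ldots, q-1\}$, $|S_{H_i}(a)| \leq 1/|\sin(\pi a/q)| \leq q/(2 \min(a, q-a))$, from which the standard computation yields $\sum_{a \bmod q} |S_{H_i}(a)| \ll q \log q$. Multiplying over $i \in I$ contributes $(q \log q)^{|I|}$, and combining with the prefactor $q^{-|I|}$ and the inner complete-sum bound $(k-1)^n q^{n/2}$ yields the claimed $\ll_{k,n} q^{n/2}(\log q)^{|I|}$. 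There is no substantive obstacle: the only conceptual point worth verifying is that the shifted polynomial remains Deligne, which is immediate since adding a linear form preserves the leading form $Q_k$ and hence its nonsingularity in $\mathbb{P}^{n-1}_{\overline{\F}_q}$; everything else reduces to classical estimates.
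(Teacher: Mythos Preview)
Your proposal is correct and follows essentially the same approach as the paper: completion of the incomplete sum via additive characters, application of the Weil--Deligne bound (Lemma \ref{lem:deligne}) to the resulting complete sum with a linear shift, and the standard $\sum_a \min\{H,\|a/q\|^{-1}\}\ll q\log q$ estimate for the Fourier coefficients of the interval. The paper's presentation introduces an auxiliary congruence variable and detects it via characters, whereas you Fourier-expand the interval indicator directly; these are the same computation in a different order.
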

    
 The sum on the left-hand side may be written as
 \[ \sum_{\substack{\mathbf{1}_{(J)} \leq \bfm_{(J)}\leq \mathbf{q}_{(J)}}}\sum_{\substack{\mathbf{1}_{(I)} \leq \mathbf{a}_{(I)}\leq \mathbf{q}_{(I)}}} e\left(\frac{2\pi}{q} Q_k(\bfm_{(J)},\mathbf{a}_{(I)}) \right) \sum_{\substack{\mathbf{1}_{(I)} \leq \bfm_{(I)}\leq \mathbf{H}_{(I)},\\ \bfm_{(I)}\equiv \mathbf{a}_{(I)}\modd{q}^{|I|}}} 1.\]
By the identity 
        \[
            \mathbf{1}_{m\equiv a \modd{q}} = \frac{1}{q} \sum_{1\leq h  \leq q} e\left(\frac{2\pi }{q}h\cdot (m-a)\right)
        \]
   we can expand the sum as
        \begin{align*}
          \frac{1}{q^{|I|}}     \sum_{ \mathbf{h}_{(I)} \leq \mathbf{q}_{(I)}} \left(
            \sum_{\substack{  \bfm_{(J)} \leq \mathbf{q}_{(J)},\\  \mathbf{a}_{(I)} \leq \mathbf{q}_{(I)}}} 
            e\left(\frac{2\pi }{q}(Q_k(\bfm_{(J)},\mathbf{a}_{(I)})-\mathbf{h}_{(I)}\cdot \mathbf{a}_{(I)})\right) \sum_{\mathbf{m}_{(I)} \leq \mathbf{H}_{(I)}} e\left(\frac{2\pi}{q} \mathbf{h}_{(I)}\cdot \bfm_{(I)}\right) \right).
        \end{align*}
    Since $Q_k(X_1,\ldots,X_n)$ is a Deligne polynomial of degree $k \geq 2$, so is $Q_k(X_1,\ldots,X_n) - \hbf_{(I)} \cdot \mathbf{X}_{(I)}$ so by Lemma \ref{lem:deligne}, the complete sum over $\mbf_{(J)},\mathbf{a}_{(I)}$ is bounded above by $(k-1)^n q^{n/2}$. 
    For the remaining double sum over $\mathbf{h}_{(I)}$ and $\bfm_{(I)}$, we recall that for each $ h,H \geq 1,$
   \[
        \sum_{1\leq m \leq H} e(2\pi hm/q) \ll \min \{H, \|h/q\|^{-1}\}.
 \]
    (Here $\|t\|$ temporarily indicates the distance from $t$ to the nearest integer.)
By considering cases $1\leq h \leq q/2$ and $q/2<h\leq q$, note that $\sum_{1\leq h \leq q} \min \{H, \|h/q\|^{-1}\} \ll q \log q$. Hence the double sum over $\mathbf{h}_{(I)}$ and $\bfm_{(I)}$ is bounded above by $(q\log q)^{|I|}$. In total, the sum considered in the lemma is thus bounded by 
$\ll (k-1)^n q^{n/2}(\log q)^{|I|}$, and the proof is complete.

 \subsection{Approximation of a sum with real coefficients by complete sums}
We next approximate an exponential sum with real coefficients in the linear term by complete exponential sums with prime moduli.  
\begin{prop}\label{prop:2.7}
    Let $Q_k(X_1,...,X_m)\in \Z[X_1,...,X_m]$ be a Deligne polynomial of degree $k\geq 2$ over $\F_q$ for a prime $q$. Let $1\leq a <q$. Let $V \geq 0$ and $y \in \R^m$, and suppose that for each $1\leq i \leq m$ there exists $1\leq b_i \leq q$   such that $|y_i-2\pi b_i/q|<V$. Then for any $\bfM=(M,...,M)\in \R^{m}$ and  $\bfN=(N_1,...,N_m)\in \R^{m}$ with $0 \leq N_i \leq N$,
    \[
            \left|\sum_{\bfM \leq \bfm \leq \bfM+\bfN} e\left(\frac{2\pi a}{q} Q_k(\bfm)+ y \cdot \bfm\right)\right| 
            = \prod_{i=1}^m\left\lfloor \frac{N_i}{q} \right\rfloor \cdot 
             \left| \sum_{\bfm \modd q^m} e\left(\frac{2\pi}{q}(a Q_k(\bfm)+b \cdot \bfm)\right) \right| + E
    \]
    where,  under the assumption $VN \leq 1$, 
 \[
         |E|\ll_{k,m}   VN  \cdot \prod_{i=1}^m\left\lfloor  \frac{N_i}{q}\right\rfloor \cdot
           q^{m/2}
           +\sup_{|J|<m}  \prod_{j \in J}\left\lfloor \frac{N_j}{q}\right\rfloor \cdot
             q^{m/2} (\log q)^{m-|J|} .\]

\end{prop}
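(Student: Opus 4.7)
The plan is to decouple the summand into a $q$-periodic arithmetic factor times a slowly-varying real phase, peel off the latter via partial summation, and then reduce the arithmetic factor to complete exponential sums bounded via the Weil-Deligne estimate (Lemma \ref{lem:deligne}) and the incomplete-sum bound of Proposition \ref{prop:bound_incomplete}.

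First, write $y_i = 2\pi b_i/q + \delta_i$ with $|\delta_i| < V$, so that the summand factors as
\[
e\!\left(\tfrac{2\pi a}{q} Q_k(\bfm) + y\cdot\bfm\right) = \alpha(\bfm)\, e(h(\bfm)),
\]
where $\alpha(\bfm) := e(\tfrac{2\pi}{q}(a Q_k(\bfm) + b\cdot\bfm))$ is periodic of period $q$ in each coordinate and $h(\bfm) := \delta\cdot\bfm$ satisfies $|\partial_{\boldsymbol{\kappa}} h| \leq V^{|\boldsymbol{\kappa}|}$ for every $\boldsymbol{\kappa} \in \{0,1\}^m$. Apply Lemma \ref{lem:partial_summation} to extract $e(h(\bfm))$:
\[
\sum_{\bfM \leq \bfm \leq \bfM+\bfN} \alpha(\bfm)\, e(h(\bfm)) = e(h(\bfM+\bfN))\, S_0 + E_1,
\]
where $S_0 := \sum_{\bfM \leq \bfm \leq \bfM+\bfN} \alpha(\bfm)$ and $|E_1|$ is controlled by partial sums of $\alpha$ weighted by $\prod_{j\in J} N_j \cdot V^{|J|}$ for subsets $J \subseteq \{1,\dots,m\}$ with $|J|\geq 1$.

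Next decompose the range of summation in $S_0$ into full cycles modulo $q$ in each coordinate plus residual edge pieces. Since $\alpha$ is $q$-periodic in each coordinate, the all-full-cycles contribution is exactly $\prod_i \lfloor N_i/q\rfloor \cdot \bfT(a,b;q)$. The remaining contributions, indexed by nonempty subsets $I \subseteq \{1,\dots,m\}$ of coordinates belonging to a partial final block, are incomplete exponential sums of the shape appearing in Proposition \ref{prop:bound_incomplete}: complete in the coordinates outside $I$, partial in those inside $I$, each weighted by the full-cycle count $\prod_{j \notin I}\lfloor N_j/q\rfloor$. By Proposition \ref{prop:bound_incomplete}, each such piece is bounded by $\prod_{j \notin I}\lfloor N_j/q\rfloor \cdot q^{m/2}(\log q)^{|I|}$, which aggregated over $I$ produces, upon taking absolute values, precisely the second error term of the proposition. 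To bound $E_1$, apply the same decomposition to each partial sum of $\alpha$ arising in Lemma \ref{lem:partial_summation} and use $|\bfT(a,b;q)| \ll q^{m/2}$ (Lemma \ref{lem:deligne}) together with $u_j \leq N_j$ to bound each such partial sum by $\ll \prod_i \lfloor N_i/q\rfloor \cdot q^{m/2}$, up to edge contributions that can be absorbed into the second error term. This gives
\[
|E_1| \ll \sup_{|J|\geq 1} (VN)^{|J|} \cdot \prod_i \lfloor N_i/q\rfloor \cdot q^{m/2},
\]
and under the hypothesis $VN \leq 1$ the supremum is attained at $|J|=1$, producing the first stated error term.

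The principal difficulty is combinatorial bookkeeping: each of the $2^m$ corner patterns in the decomposition of $S_0$, and each partial sum of $\alpha$ arising in $E_1$, itself produces further edge contributions, and one must check that these distribute cleanly between the two stated error terms (possibly after enlarging implicit constants). The hypothesis $VN \leq 1$ is essential to collapse the supremum over $|J|$ in the partial-summation error to its dominant $|J|=1$ contribution; without it, higher-order terms $(VN)^{|J|}$ for $|J|\geq 2$ would have to be tracked explicitly. Beyond this combinatorial care, every step reduces directly to Lemma \ref{lem:deligne} and Proposition \ref{prop:bound_incomplete}.
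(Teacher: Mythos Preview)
Your proposal is correct and follows essentially the same approach as the paper: factor the summand into a $q$-periodic arithmetic piece times the linear phase $e((y-2\pi b/q)\cdot\bfm)$, remove the latter by Lemma~\ref{lem:partial_summation} (where linearity forces only $|\boldsymbol{\alpha}_i|=1$ to contribute, giving the $V^{|J|}$ factor), and then decompose both the main term and the partial sums appearing in the error into complete copies of $\bfT(a,b;q)$ bounded by Lemma~\ref{lem:deligne} plus incomplete remainders bounded by Proposition~\ref{prop:bound_incomplete}. The paper carries out exactly this sequence, with the same use of $VN\leq 1$ to collapse $(VN)^{|J|}$ to $VN$.
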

\begin{remark}
    \label{remark_size_of_V}
In our later application we will take $m=n-r$ and coordinates $(m_{r+1},\ldots,m_n) \in \Z^{n-r}$. Set  $N=\max\{N_{r+1},...,N_n\}.$ The bound for $|E|$ is increasing as a function $N_j$ so we can replace each $N_j$ by $N$. Recalling the hypothesis $VN \leq 1,$  the bound for $|E|$ can be crudely estimated by
   \[
        |E| \ll_{k,n,r}   \left\lfloor \frac{N}{q} \right\rfloor^{n-r}
           q^{(n-r)/2}(VN+   \left\lfloor \frac{N}{q} \right\rfloor^{-1}
              (\log q)^{n-r}) .
  \]
We need the error to be at most a small proportion (say half the size) of the main term. To achieve this, we will choose $(a,b)\in \F_q \times \F_q^{n-r}$ so that the exponential sum in the main term is large (using Proposition \ref{prop:bound_T}) and  we will impose conditions that force  $V \leq d_0 N^{-1}$ for a constant $d_0<1$ as small as we like, and $N/q \gg q^{\Del_0}$ for some $\Del_0>0$ (see \S \ref{sec:arithmetic}).
\end{remark}

To prove the proposition, we apply partial summation with respect to $\mathbf{m}$, following Lemma \ref{lem:partial_summation}, with the function $h(\mathbf{m}) = (y-\tfrac{2\pi}{q}b)\cdot \mathbf{m}.$  Since this is linear,
for a multi-index $\boldsymbol{\alpha} \in \{0,1\}^n$, the $\partial_{\boldsymbol{\al}}$ partial derivative as a function of $\mathbf{m}$ vanishes unless $|\boldsymbol{\al}|=1$, say $\partial_{\boldsymbol{\al}}= \partial_i,$ in which case
       \[
         |\partial_i((y-\tfrac{2\pi}{q}b)\cdot \mathbf{m})| =   |y_i-2\pi b_i/q| < V.
      \]
Thus   using the notation of the lemma, for any nonempty subset $J \subseteq\{1,\ldots,m\},$ we only need to consider the following expression in the case each $|\boldsymbol{\alpha}_i|=1$ (so that $\ell=|J|$):
   \[
             \sup_{1\leq \ell \leq |J|} \sup_{\substack{\boldsymbol{\alpha_1},...,\boldsymbol{\alpha_\ell} \in \{0,1\}^n\\\boldsymbol{\alpha_1}+ \cdots +\boldsymbol{\alpha_\ell} = \boldsymbol{1}_J}}  \prod_{i=1}^{\ell} B_{\boldsymbol{\alpha_i}} = \prod_{i=1}^{|J|} V = V^{|J|}.
        \]
    Now apply partial summation to see that
        \begin{multline}\label{eqn:2.7_ps}
            \sum_{\bfM\leq \bfm \leq \bfM+\bfN} e\left(\frac{2\pi a}{q} Q_k(\bfm)+ y\cdot \bfm\right) \\
            = e\left( (y-\frac{2\pi}{q}b)\cdot (\bfM+\bfN) \right) \sum_{\bfM \leq \bfm \leq \bfM+\bfN} e\left(\frac{2\pi}{q}(a Q_k(\bfm)+b \cdot \bfm)\right) + E_1
        \end{multline}
    where $E_1$ is dominated by
    \[ \sup_{|J| \geq 1} V^{|J|}\prod_{j \in J}N_j \cdot \sup_{\mathbf{u}_{(J)} \leq \mathbf{N}_{(J)}} \left| \sum_{\substack{\mathbf{M}_{(I)} \leq \bfm_{(I)}\leq \mathbf{M}_{(I)}+\mathbf{N}_{(I)},\\\mathbf{M}_{(J)} \leq \bfm_{(J)}\leq \mathbf{M}_{(J)}+ \mathbf{u}_{(J)}}} e\left(\frac{2\pi }{q}(a Q_k(\bfm_{(J)},\bfm_{(I)}) + b\cdot (\bfm_{(J)},\bfm_{(I)}))\right)\right|  
 \]
    with nonempty $J\subseteq \{1,...,m\}$ and $I= \{1,...,m\} \setminus J$.
The first factor is $\leq (VN)^{|J|} \leq VN \leq 1.$    We may estimate the sum in the error $E_1$ by first breaking it into a main term of as many complete sums (complete in all $m$ coordinates) as possible, that is,
   \[ \prod_{i\in I }\left\lfloor \frac{N_i}{q} \right\rfloor\cdot 
            \color{black}\prod_{j\in J}
            \left\lfloor \frac{u_j}{q} \right\rfloor\cdot
            \sum_{\bfm \modd q^{m}} e\left(\frac{2\pi }{q}(a Q_k(\bfm_{(J)},\bfm_{(I)}) +  b \cdot(\bfm_{(J)},\bfm_{(I)}))\right),\]  
plus an error term of incomplete sums, that is, of the form
    \[    \sum_{\substack{\tilde{I}\subseteq I, \tilde{J}\subseteq J,\\
            |\tilde{I}\cup \tilde{J}|<m }}  
             \prod_{i\in \tilde{I}}\left\lfloor \frac{N_i}{q} \right\rfloor \cdot 
             \prod_{j\in \tilde{J}}
            \left\lfloor \frac{u_j}{q} \right\rfloor\cdot
            \sideset{}{^*}\sum  e\left(\frac{2\pi }{q}(a Q_k(\bfm_{(J)},\bfm_{(I)}) + b\cdot (\bfm_{(J)},\bfm_{(I)}))\right)
  \]
 in which the starred sum is over $\mathbf{1} \leq \bfm_{(\tilde{I})}\leq \mathbf{q}_{(\tilde{I})}$, $\mathbf{1} \leq \bfm_{(I\setminus \tilde{I})}\leq \mathbf{H}_{(I\setminus \tilde{I})}$,
            $\mathbf{1} \leq \bfm_{(\tilde{J})}\leq \mathbf{q}_{(\tilde{J})}$, $\mathbf{1} \leq \bfm_{(J \setminus \tilde{J})}\leq \mathbf{H}_{(J\setminus \tilde{J})}
            $,  for some $\mathbf{H}=(H_1,...,H_m)$ with $1\leq H_i < q$.
Since $Q_k$ is a Deligne polynomial of degree $k\geq 2$, we may apply the Weil-Deligne bound in Lemma \ref{lem:deligne} so that the contribution of the complete sums is at most  \[  \ll_{k,m}    \prod_{i\in I }\left\lfloor \frac{N_i}{q} \right\rfloor \cdot
             \prod_{j\in J}
            \left\lfloor \frac{u_j}{q} \right\rfloor \cdot q^{m/2}
            \ll \prod_{i=1}^m\left\lfloor \frac{N_i}{q} \right\rfloor\cdot
            q^{m/2} .\]
Again using that $Q_k$ is a Deligne polynomial, we may apply Proposition \ref{prop:bound_incomplete} so the contribution of the incomplete sums is 
\[\ll \sup_{\substack{\tilde{I}\subseteq I,\tilde{J}\subseteq J,\\ | \tilde{I}\cup \tilde{J}|< m}}  \prod_{i\in \tilde{I}}\left\lfloor \frac{N_i}{q} \right\rfloor\cdot  \prod_{j\in \tilde{J}}
            \left\lfloor \frac{u_j}{q} \right\rfloor \cdot q^{m/2} (\log q)^{m-(|\tilde{J}|+|\tilde{I}|)}
            \ll \sup_{|J|< m}  \prod_{j \in J} \left\lfloor \frac{N_j}{q} \right\rfloor\cdot  q^{m/2} (\log q)^{m-|J|}.
     \]
     This is sufficient for bounding $E_1.$
Finally we can similarly separate the main term of the right-hand side of \eqref{eqn:2.7_ps} into complete and incomplete sums, that is,
\[
             \prod_{i=1}^m\left\lfloor \frac{N_i}{q} \right\rfloor \cdot \left| \sum_{\bfm \modd q^m} e\left(\frac{2\pi}{q} (a Q_k(\bfm)+ b\cdot \bfm)\right) \right| \\
             + E_1',\]
            where 
            \[ E_1' = \sum_{|J|<m}  \prod_{j\in J}\left\lfloor \frac{N_j}{q}\right\rfloor \cdot \sum_{\substack{\mathbf{1}_{(J)} \leq \bfm_{(J)}\leq \mathbf{q}_{(J)},\\\mathbf{1}_{(I)} \leq \bfm_{(I)}\leq \mathbf{H}_{(I)}}} e\left(\frac{2\pi}{q}(a Q_k(\bfm_{(J)},\bfm_{(I)})+b\cdot (\bfm_{(J)},\bfm_{(I)}))\right)
    \]
for some $H_i <q$. Another application of Proposition \ref{prop:bound_incomplete} shows that 
\[ |E_1'|
            \ll \sup_{|J|< m} \prod_{j\in J}\left\lfloor \frac{N_j}{q}\right\rfloor \cdot q^{m/2}(\log q)^{m-|J|}.
     \]
Combined with the bound for $E_1$, this proves the proposition.

\section{Reducing the maximal operator to an exponential sum}\label{sec:reducing}

\subsection{Initial definition of the test function}

We now construct a collection of test functions  to prove Theorem \ref{thm:main3}. The symbol $P\in \R[X_1,...,X_n]$ has leading form $P_k \in \Q[X_1,\ldots,X_n]$ that   is Dwork-regular in $X_1,\ldots,X_n$ over $\Q$  and has intertwining rank $r$. By relabelling variables we may assume that $X_1$ intertwines with  $X_1,X_2,...,X_{r}$ and does not intertwine with $X_{r+1},\ldots,X_n$. By a $\mathrm{GL}_n(\Q)$ change of variables we may clear denominators and assume from now on that $P_k$ has integral coefficients.
Fix  $R\geq 1$, which will later be a parameter that tends to infinity. Let
\[
        S_1 = R^\sigma, \qquad L = R^\lambda
 \]
where $0 < \sigma< 1$, $0< \lambda < 1$ are parameters we will choose optimally later.   Once and for all, fix a Schwartz function $\phi$ on $\R$ with the properties (i) $\phi \geq 0$, (ii) $\phi(0) = (2\pi)^{-1}\int \hat{\phi}(\xi)d\xi=1,$ (iii) $\mathrm{supp}(\hat{\phi}) \subseteq [-1,1]$; such a function may be constructed in a standard way, such as in \cite[\S 2.1]{Pie20}. Then define for each $m \geq 1$ and variables $u_1,\ldots,u_m$ that $\Phi_m(u_1,\ldots,u_m) = \phi(u_1) \cdots \phi(u_m)$.

We will define the test function $f$, tailored to the fact that $X_1$ does not intertwine with $X_{r+1},\ldots,X_n$.
Fix an integral tuple $M=(M_1,M_2,...,M_r)\in\Z^r$ with $M_i\geq 1$ as provided by Lemma \ref{lem:coefficients}. 
Define the test function $f$ to be  
    \begin{multline*} 
        f(x) := \Phi_r(S_1x_1,x_2,\ldots,x_r)e((M\circ R)\cdot (x_1,\ldots,x_r))
        \\\times \Phi_{n-r}(x_{r+1},\ldots,x_n)\sum_{\substack{m\in \Z^{n-r},\\R/L\leq m_j<2R/L}} e(Lm\cdot (x_{r+1},\ldots,x_n))
    \end{multline*}
where we recall the notation $M\circ R = (M_1R,...,M_rR)$.
Let $M_* = \max\{2,M_1,M_2,...,M_{r}\}$. The Fourier transform of $f$ is supported in  
  \[
       [M_1R-S_1,M_1R+S_1]\times [M_2R-1,M_2R+1]\times \cdots \times [M_{r}R-1,M_{r}R+1]\times [R-1,2R+1]^{n-r},\]
       which is contained in $B_n(0,\sqrt{n} M_*R + \sqrt{n} S_1) \setminus B_n(0,\sqrt{n}R - \sqrt{n}S_1).$
Since $S_1 = R^\sig$ with $\sig<1$ there exists some $R_1(\sig)$ such that for $R \geq R_1(\sig)$,  $S_1 \leq (1/2)R$ so that this region is contained in the annulus $\{ (1/C)R \leq |x|\leq C R\}$ with $C=2M_*\sqrt{n}$, say, so that $C$ depends only on the symbol $P$ and the dimension $n$.

Finally, we note  for later reference the size of the norm of $f$. Since $\hat{f}$ is supported in the annulus stated above, for all $R \geq 1,$ $R^s\|f\|_{L^2(\R^n)} \ll_s \|f\|_{H^s(\R^n)} \ll_s R^s \|f\|_{L^2(\R^n)}.$ Moreover,
\beq\label{f_norm}
S_1^{-1/2} \lfloor R/L \rfloor^{\frac{n-r}{2}} \|\phi\|_{L^2(\R)}^n \leq \|f\|_{L^2(\R^n)} \leq S_1^{-1/2} \lceil R/L \rceil^{\frac{n-r}{2}} \|\phi\|_{L^2(\R)}^n.
\eeq
Explicitly, using the notation $\xi = (\xi_1,\ldots,\xi_r)$ and $\eta = (\xi_{r+1},\ldots,\xi_n),$
\[ \hat{f}(\xi, \eta)  = \sum_{\substack{m \in \Z^{n-r}\\R/L \leq m_j < 2R/L}} g_{m}(\xi,\eta),\]
in which $g_{m}(\xi,\eta) = S_1^{-1} \hat{\Phi}_{r}(S^{-1}\circ(\xi - M\circ R)) \hat{\Phi}_{n-r}(\eta - Lm)$, where $S^{-1}=(S_1^{-1},1,...,1)$. Thus $g_m$ is supported in the set $\Bcal + (M\circ R,Lm)$, where $\Bcal$ is the box $[-S_1,S_1] \times [-1,1]^{n-1}.$ In particular, for $n \geq 2, n-r\geq 1,$ and $L \geq 4$, the supports of $g_{m}$ as $m$ varies are distinct. Thus by Plancherel's theorem,
\[ (2\pi)^n\|f\|_{L^2(\R^n)}^2=\|\hat{f}\|^2_{L^2(\R^n)} = \sum_{\substack{m \in \Z^{n-r}\\R/L \leq m_j < 2R/L}} \|g_{m}\|_{L^2(\R^n)}^2,\]
and the claim (\ref{f_norm}) holds since by Plancherel's theorem,
\[\|g_{m}\|_{L^2(\R^n)}^2 = S_1^{-1}\|\hat{\phi}\|_{L^2(\R^n)}^{2n} = S_1^{-1}(2\pi)^n\|\phi\|_{L^2(\R^n)}^{2n}.\]
\begin{remark}\label{remark_n_r_1}
Note that if $n=r$ or $n=1$ the above sum is empty; computing the $\|f\|_{H^s}$ norm as above and taking $\sig=1/2$   produces a counterexample to (\ref{thm_T_bound}) for all $s < 1/4,$ which is the claim of Theorem \ref{thm:main2} in this case. Thus from now on we may assume $r<n$ and $n \geq 2$.
\end{remark}

\subsection{Approximation of the maximal operator by an exponential sum} 
Since $f$ treats the intertwined variables $x_1,\ldots,x_r$ differently, it is convenient to define $v=(x_1,\ldots,x_r)$ and $w = (x_{r+1}, \ldots,x_n)$, and similarly use $\xi=(\xi_1,...,\xi_{r})$, $\eta=(\xi_{r+1},...,\xi_n)$; finally, we will continue to denote $S \circ \xi = (S_1\xi_1,\xi_2,\ldots,\xi_r)$ and $M\circ R=(M_1R,...,M_{r}R)$ for $(M_1,\ldots,M_r)$ provided by Lemma \ref{lem:coefficients}. By definition, for the test function $f$, 
\begin{multline}\label{eqn:T_int} 
  T_t^{P}f(x) =   \frac{1}{(2\pi)^{n}} \int_{\R^{n}} \hat{\Phi}_{n}(\xi,\eta) e((S\circ \xi + M\circ R)\cdot v + \eta\cdot w ) \\
  \times \sum_{\substack{m \in \Z^{n-r} \\R/L \leq m_j < 2R/L}} e(Lm\cdot w) e(P(S\circ \xi+M\circ R,\eta+Lm)t)d\xi d\eta.
\end{multline}
The main result of this section is that $T_t^{P}f(x)$ is well-approximated by an exponential sum defined as follows: for any $u \in \R^{n-r}$ with each $u_j> R/L$, and any $w \in \R^{n-r},t \in \R$ set
\[
    \Sbf(u;w,t):=\sum_{\substack{m \in \Z^{n-r} \\R/L \leq m_j < u_j}} e\left(Lm\cdot w + P_k(M\circ R,Lm) t\right).
\]
For simplicity, when $u = (2R/L,\ldots, 2R/L)$, we denote this sum by $\mathbf{S}(2R/L;w,t)$.

Fix $0<c_0<1/2$. Then since $\phi$ is smooth and $\phi(0)=1$, there exists a small constant $\delta_0=\delta_0(c_0, \phi)$ such that 
    \begin{align}\label{eqn:phi_delta}
        \phi(y)\geq 1-c_0/2 \qquad \text{for all $|y|\leq \delta_0$.}
    \end{align}
 The main result of this section is:

\begin{prop}\label{prop:approx_maximal} Let $0<c_0<1/2$ be a small constant and $\delta_0$ be as in \eqref{eqn:phi_delta}. There exist constants $0< c_1(\delta_0,k,P),c_2(k,P)<1/2$ such that for all  $c_1<c_1(\delta_0,k,P),c_2<c_2(k,P)$ and $0<c_3<1$ as small as we like, the following holds. 

Let $x\in (-c_1,-c_1/2]\times [-c_1,c_1]^{n-1}$, $\sigma \leq 1/2$ and $R\geq R_2(c_1,c_2,k, P,\sigma)$. Let $t\in (0,1)$ satisfy  
    \begin{align}\label{eqn:t_constraints}
        t &= \frac{-x_1}{(\partial_1 P_k)(M\circ R,\tilde{R})} + \tau, \qquad |\tau| \leq \frac{c_2\delta_0}{S_1R^{k-1}},\qquad \text{and} \qquad
        t \leq \frac{c_3}{R^{k-1}},
    \end{align}
    in which   $\tilde{R}:=(L(\lceil 2R/L \rceil -1),...,L(\lceil 2R/L \rceil -1)) \in \R^{n-r}$.
Then with $w=(x_{r+1},\ldots,x_n),$
 \[
        |T_t^{P}f(x)| \geq (1-c_0)^n |\Sbf(2R/L;w,t)| + \Ebf_1
 \]
where  
    \[
        |\Ebf_1| \ll_{\phi,n,k,r,P}c_3  \sup_{u \in [R/L,2R/L]^{n-r}} |\Sbf(u;w,t)|.
   \]
\end{prop}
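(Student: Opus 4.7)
The plan is to approximate $T_t^P f(x)$ by $(1-c_0)^n|\mathbf{S}(2R/L;w,t)|$ via a three-step reduction mirroring \S\ref{sec_heuristic}: (i) partial summation in $m$ to factor out an $(\xi,\eta)$-dependent smooth weight from the inner sum; (ii) integration by parts in $(\xi,\eta)$ to remove the higher-order part of the $(\xi,\eta)$-phase; (iii) Fourier inversion on the remaining linear phase, leaving a value of $\Phi_n$ close to $\Phi_n(0)=1$ by the hypotheses on $(x,t)$ and \eqref{eqn:phi_delta}. Throughout, every error term must be controlled by $c_3\sup_u|\mathbf{S}(u;w,t)|$.

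For step (i), I would rewrite the summand in \eqref{eqn:T_int} as the product of the target arithmetic kernel $e(Lm\cdot w+P_k(M\circ R,Lm)t)$ with the smooth weight
\[
W_{\xi,\eta}(m):=e\bigl([P(S\circ\xi+M\circ R,\eta+Lm)-P_k(M\circ R,Lm)]t\bigr),
\]
which captures both the $(\xi,\eta)$-dependent cross terms and the lower-degree contribution $P-P_k$. Applying Lemma \ref{lem:partial_summation} produces a main term $W_{\xi,\eta}(m_*)\cdot\mathbf{S}(2R/L;w,t)$, where $m_*$ is the corner of the summation domain (with $Lm_*=\tilde{R}$), plus an error driven by the $m$-derivatives of $W_{\xi,\eta}$ and by partial exponential sums. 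The decisive computation is: for $j>r$, the identity $\partial_1\partial_j P_k\equiv 0$ (from the labelling enforced by the intertwining rank hypothesis) kills every $y_1$-term in the Taylor expansion of $\partial_j P_k(y+a)-\partial_j P_k(a)$, so that $|\partial_j P(y+a)-\partial_j P_k(a)|=O(R^{k-2})$ uniformly in $(\xi,\eta)\in\supp\hat{\Phi}_n$ and $m$ in the summation range. Consequently $|\partial_{m_j}W_{\xi,\eta}|=O(tLR^{k-2})=O(c_3 L/R)$ using $t\leq c_3/R^{k-1}$, and the partial-summation error is bounded by $O(c_3\sup_u|\mathbf{S}(u;w,t)|)$.

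For steps (ii) and (iii), pull $\mathbf{S}(2R/L;w,t)$ outside the $(\xi,\eta)$-integral and decompose the total $(\xi,\eta)$-phase of the remaining integrand into a constant part, a linear form $L_1(\xi,\eta)$, and a higher-order part $H_1(\xi,\eta)$ of $(\xi,\eta)$-degree $\geq 2$. Apply Lemma \ref{lem:partial_integration} with $e(H_1)$ as a smooth weight: the hypothesis $\sigma\leq 1/2$ bounds each $(\xi,\eta)$-derivative $|\partial^\kappa H_1|\lesssim c_3$, since the contributing terms have magnitude $t\cdot S_1^{\beta_1}R^{k-\beta_1}\leq c_3$ precisely when $\sigma\leq 1/2$ (so that $\beta_1(1-\sigma)\geq 1$ whenever $\beta_1\geq 2$), yielding an integration-by-parts error of $O(c_3\sup_u|\mathbf{S}(u;w,t)|)$. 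The main term then evaluates by Fourier inversion to $(2\pi)^n\Phi_n(\mathbf{y})$, where $\mathbf{y}_1=S_1(x_1+t\partial_1 P(M\circ R,\tilde{R}))$ and $\mathbf{y}_i=x_i+t\partial_iP(M\circ R,\tilde{R})$ for $i\geq 2$. By the choice of $t$ in \eqref{eqn:t_constraints} together with Lemma \ref{lem:coefficients} (which guarantees $|\partial_1P_k(M\circ R,\tilde{R})|\gg R^{k-1}$), one obtains $|\mathbf{y}_1|\leq Cc_2\delta_0+O(c_3R^{\sigma-1})$ and $|\mathbf{y}_i|\leq c_1+O(c_3)$ for $i\geq 2$. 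For $c_1,c_2,c_3$ sufficiently small and $R$ sufficiently large, each $|\mathbf{y}_i|\leq\delta_0$, so $\Phi_n(\mathbf{y})\geq(1-c_0/2)^n\geq(1-c_0)^n$ by \eqref{eqn:phi_delta}. Taking absolute values in the resulting identity for $T_t^Pf(x)$ gives the claimed bound.

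The main obstacle is the $m_j$-derivative estimate for $j>r$ in step (i): a priori the Taylor expansion of $\partial_j P_k(y+a)$ contains a dominant linear-in-$y_1$ term whose coefficient $S_1\partial_1\partial_j P_k(a)$ would be of size $S_1R^{k-2}$, forcing $|\partial_{m_j}W_{\xi,\eta}|\gtrsim S_1/R$ and inflating the partial-summation error to $\gtrsim S_1\sup_u|\mathbf{S}|$, which fails badly once $S_1=R^\sigma$ grows. The intertwining rank hypothesis is engineered exactly to eliminate this term via $\partial_1\partial_j P_k\equiv 0$ for $j>r$, which is also the reason the test function $f$ sums only over the last $n-r$ coordinates rather than all $n$. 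Beyond this structural input, the remainder of the proof is careful bookkeeping: every partial exponential sum arising in the error terms of Lemmas \ref{lem:partial_summation} and \ref{lem:partial_integration} is dominated by $\sup_u|\mathbf{S}(u;w,t)|$, and each is integrated against the $L^1$-bounded Schwartz factor $\hat{\Phi}_n$, producing an overall error of the desired size.
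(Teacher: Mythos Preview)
Your proposal is correct and follows essentially the same three-step argument as the paper's proof in \S\ref{sec:expsum_partial_sum}--\S\ref{sec:expsum_proof}: partial summation in $m$ exploiting $\partial_1\partial_j P_k\equiv 0$ for $j>r$, integration by parts in $(\xi,\eta)$ using $\sigma\leq 1/2$, and Fourier inversion followed by the $\Phi_n$ lower bound via \eqref{eqn:phi_delta}. The only cosmetic difference is that the paper isolates $(\nabla P_k)(M\circ R,\tilde R)$ rather than $(\nabla P)(M\circ R,\tilde R)$ as the linear part before Fourier inversion, absorbing the lower-order gradients into the weight removed by integration by parts; your version introduces the harmless extra $O(c_3 R^{\sigma-1})$ correction in $\mathbf{y}_1$, which is equally acceptable.
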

Since we will bound $|\Sbf(u;w,t)|$ by a function that is increasing as a function of  $u_j$, we will obtain $|\Ebf_1| \ll_{\phi,n,k,r,P} c_3  |\Sbf(2R/L;w,t)|$, so that the error term is at most half the main term, by taking $c_3$ appropriately small, as we may.
    For the conditions in (\ref{eqn:t_constraints}) to be compatible we  need $(\partial_1 P_k)(M\circ R,\tilde{R})$ to be nonzero and moreover to be of size $R^{k-1}$. This is assured by Lemma \ref{lem:coefficients}.

To prove the proposition, we first use partial summation to  remove all terms in the sum over $m$ in (\ref{eqn:T_int}) that do not appear in $\Sbf(2R/L;w,t)$; to accrue only an acceptable error, we require the third constraint in \eqref{eqn:t_constraints}, and intertwining rank plays a key role. The next step is to use integration by parts to remove all terms in the phase in (\ref{eqn:T_int}) that are nonlinear in $\xi,\eta$, so that we may apply Fourier inversion to the integral of $\hat{\Phi}_n$; this applies the third constraint in  \eqref{eqn:t_constraints}. Finally, after Fourier inversion, we arrive at a main term involving the evaluation of $\Phi_n$, and to bound this from below we use the fact that $\Phi_n(0)=1$ forces $t$ to lie in a specified range around a specific point, which is encoded in the first and second constraints in \eqref{eqn:t_constraints}.

\subsection{Removing non-arithmetic terms in the exponential sum}\label{sec:expsum_partial_sum}
  We will use the notation 
    \begin{align*}
        P=P_k+P_{k-1}+\cdots + P_0
    \end{align*} 
where each $P_i$ with $0 \leq i \leq k-1$ is a homogeneous polynomial of degree $i$ with real coefficients.
Then by Taylor expansion (multinomial expansion),
\[
        P(S\circ \xi +M\circ R,\eta+Lm)
        =P_k(M\circ R,Lm) + Q(S\circ \xi+M\circ R,\eta+Lm)
  \]
in which we define
\[
     Q(u+v)
        := \sum_{j=0}^{k-1}P_j(u) + \sum_{j=0}^k \sum_{1\leq |\alpha|\leq j} \frac{(\partial_\alpha P_j)(u)}{\alpha !} v^{\alpha}, \qquad \text{for any $u,v \in \R^n$,}\]
        so that
\beq\label{eqn:Q}
        Q(S\circ \xi +M\circ R,\eta+Lm)
        = \sum_{j=0}^{k-1}P_j(M\circ R,Lm) + \sum_{j=0}^k \sum_{1\leq |\alpha|\leq j} \frac{(\partial_\alpha P_j)(M\circ R,Lm)}{\alpha !} (S\circ \xi,\eta)^{\alpha}.
\eeq
 Our goal is to extract the non-arithmetic weight $e( Q(S\circ \xi +M\circ R,\eta+Lm)t)$ from the sum over $m$ in (\ref{eqn:T_int}) via partial summation, using the assumption on intertwining rank.
To understand the role of intertwining rank, it is helpful to see a motivating example. 
\subsubsection{Example}\label{sec_example_P}
Suppose the dimension $n=2$ and $P(X_1,X_2) =X^\be$ is simply a monomial, where $\beta$ is a multi-index $(\be_1,\be_2)$ with $\be_1 + \be_2 =k$. Then 
 \beq\label{P_example}  P(S_1\xi_1 +M_1R,\xi_2+Lm_2)
        = \sum_{j=0}^{\be_1} {\binom{\be_1}{j}} (M_1R)^{\be_1-j}(S_1\xi_1)^j \cdot \sum_{\ell=0}^{\be_2} {\binom{\be_2}{\ell}}(Lm_2)^{\be_2-\ell}\xi_2^\ell.\eeq
We isolate the arithmetic term $(M_1R)^{\be_1}(Lm_2)^{\be_2}$ (the $j=\ell=0$ term), leaving the non-arithmetic terms
\[ 
H(m_2):=\sum_{\substack{\al \leq \be\\\al \neq (0,0)}} \frac{\be !}{\al ! (\be - \al)!} (M_1R,Lm_2)^{\be - \al}(S_1\xi_1,\xi_2)^{\al}.
\]
In this example, we model the sum in (\ref{eqn:T_int}) by summing over $m_2:$
\[
\sum_{R/L \leq m_2 < 2R/L} e(Lm_2x_2+ (M_1R)^{\be_1}(Lm_2)^{\be_2}t)e(h(m_2))
\]
with $h(m_2) = H(m_2)t.$
By partial summation with respect to $m_2$, the weight $e(h(m_2))$ can be removed with an error term depending on the derivative $(\partial/\partial m_2)h(m_2)$. This derivative is
\[ 
tL\sum_{\substack{\al \leq \be - (0,1)\\\al \neq (0,0)}} \frac{\be !}{\al ! (\be - \al -  (0,1))!} (M_1R,Lm_2)^{\be - \al-(0,1)}(S_1\xi_1,\xi_2)^{\al}
\ll tL\sum_{\substack{\al \leq \be - (0,1)\\\al \neq (0,0)}} R^{k - |\al|-1}S_1^{\al_1}
,
\]
where the upper bound holds uniformly for $(\xi_1,\xi_2) \in [-1,1]^2,$ $m_2 \in [R/L,2R/L).$ 
In order for the resulting error term (after an application  of  Lemma \ref{lem:partial_summation}) to be acceptable, we need
\[
t  L\sum_{\substack{\al \leq \be - (0,1)\\\al \neq (0,0)}} R^{k - |\al|-1}S_1^{\al_1} \ll L/R .
\]
Since we will ultimately have $t\approx R^{-(k-1)}$, this requires $R^{k - |\al|}S_1^{\al_1}\ll R^{k-1}$ for each $\al$.
If the original monomial $P(X_1,X_2)=X_1^{\be_1}X_2^{\be_2}$ has  $\be_1 \geq 1$ and $\be_2\geq 1$, there will be a term with $|\al| = \al_1=1$, and this term will force the condition $S_1\ll 1$. If however $X_2$ does not intertwine with $X_1$ in $P(X_1,X_2)$, $\be_1=0$ so that $\al_1=0$, removing this difficulty.  This concludes our example.

 In general, we apply Lemma \ref{lem:partial_summation} to
    \begin{align}\label{eqn:sum_remove_dep}
        \sum_{\substack{m \in \Z^{n-r} \\R/L \leq m_j < 2R/L}} e(Lm\cdot w) e(P_k(M\circ R,Lm)t) e(h(m))
    \end{align}
    with 
 $ h(m):=Q(S\circ \xi+M\circ R,\eta+Lm) t.$
We claim that  for each  multi-index $\kappa=(\kappa_{r+1},...,\kappa_{n})\in \{0,1\}^{n-r}$ with $|\kappa|\geq 1$,
    \begin{align}\label{eqn:mixed_partial_P_tilde}
 \sup_{\substack{(\xi,\eta) \in [-1,1]^n \\y \in [R/L,2R/L)^{n-r}}} \left| \frac{\partial^{|\kappa|}}{\partial y_{r+1}^{\kappa_{r+1}} \cdots \partial y_{n}^{\kappa_{n}}}
      Q((S\circ \xi,\eta)+(M\circ R,Ly)) \right| \ll_{n,k,P}   R^{k-1}(L/R)^{|\kappa|} .
    \end{align}
    Note that the derivatives controlled by $\kappa$ apply only to the last $n-r$ coordinates.
It is in proving this bound that we crucially use the definition of intertwining rank, and the construction of the test function $f$ to accommodate the fact that $X_1$ does not intertwine with $X_{r+1},\ldots,X_n$ in $P_k$; in particular, we will use the fact that for each $\ell>r$, the mixed partial $\partial_\ell \partial_1 P_k$ vanishes identically.

Assume (\ref{eqn:mixed_partial_P_tilde}) for the moment; then for all such $\kappa$,
\[   \sup_{\substack{(\xi,\eta) \in [-1,1]^n \\y \in [R/L,2R/L)^{n-r}}} \left| \frac{\partial^{|\kappa|}}{\partial y_{r+1}^{\kappa_{r+1}} \cdots \partial y_{n}^{\kappa_{n}}} h(y) \right|\ll   R^{k-1}(L/R)^{|\kappa|}t =:B_{\kappa}, 
\]
in the notation of Lemma \ref{lem:partial_summation}.
(Here we also note that $B_\kappa \con 0$ if any entry in a multi-index $\kappa$ exceeds the highest corresponding degree in the polynomial $Q$. However, unlike our previous application of partial summation in Proposition \ref{prop:2.7}, in this case the total degree of $Q$ is $k-1$ in $m$, so  we could have $\ell=1$ in the application below.) Consequently, for each nonempty $J \subseteq \{r+1,\ldots,n\},$ and $\ell \leq |J|,$
\[
        \sup_{\substack{\boldsymbol{\alpha}_1,...,\boldsymbol{\alpha}_\ell\in \{0,1\}^{n-r}\\\boldsymbol{\alpha}_1+ \cdots +\boldsymbol{\alpha}_\ell = \boldsymbol{1}_J}} \prod_{i=1}^\ell B_{\boldsymbol{\alpha_i}} 
        = \sup_{\substack{\boldsymbol{\alpha}_1,...,\boldsymbol{\alpha}_\ell\in \{0,1\}^{n-r}\\\boldsymbol{\alpha}_1+ \cdots +\boldsymbol{\alpha}_\ell = \boldsymbol{1}_J}} \prod_{i=1}^\ell (R^{k-1}(L/R)^{|\boldsymbol{\alpha}_i|}t )
        = R^{(k-1)\ell}(L/R)^{|J|}t^{\ell} .
\]
This is $\ll R^{k-1}t(L/R)^{|J|}$
under the assumption $t \ll R^{-(k-1)},$ which we assume from now on.

Hence by Lemma \ref{lem:partial_summation} the sum in \eqref{eqn:sum_remove_dep} is identical to 
 \[
        \Sbf(2R/L;w,t)e(Q(S\circ \xi + M\circ R,\eta+\tilde{R})t) + E_1
    \]
where we define $\tilde{R}:=(L(\lceil 2R/L \rceil -1),...,L(\lceil 2R/L \rceil -1)) \in \R^{n-r}$ and we may take
 \[
        |E_1|\ll_{n,k,r,P} 
        \sup_{u \in [R/L,2R/L)^{n-r}} |\Sbf(u;w,t)|\cdot
      R^{k-1}t,
   \]
 uniformly in $(\xi,\eta) \in [-1,1]^n.$ Consequently, after integrating $E_1$ trivially in $(\xi,\eta)$ by applying the compact support of $\hat{\Phi}_n$ in $[-1,1]^n$,
we conclude that $ T_t^{P}f(x) $ is precisely
    \beq\label{eqn:T_int_reduced} 
 \frac{1}{(2\pi)^{n}} \Sbf(2R/L;w,t) \int_{\R^{n}} \hat{\Phi}_{n}(\xi,\eta) e((S\circ \xi + M\circ R)\cdot v +\eta\cdot w)\\
      \times
       e(Q(S\circ \xi +M\circ R,\eta+\tilde{R})t)   d\xi d\eta + E_2
   \eeq
where
 \beq\label{E2}
        |E_2| \ll_{\phi,n,k,r,P} \sup_{u \in [R/L,2R/L)^{n-r}} |\Sbf(u;w,t)|\cdot
     R^{k-1}t.
\eeq

It remains to verify (\ref{eqn:mixed_partial_P_tilde}). We recall the expansion (\ref{eqn:Q}), and bound the mixed partial of each term. First note that for $y=(y_{r+1},\ldots,y_n)$ and any $\ell > r$, for each degree $j$,
\[|(\partial/\partial y_\ell) (P_j(M\circ R,Ly))| = L |( \partial_\ell P_j)(M\circ R,Ly)| \ll R^{j-1}L,\]
uniformly for $y \in [R/L,2R/L)^{n-r}.$
Thus for each fixed $j \leq k-1$,    for each $\kappa = (\kappa_{r+1},\ldots,\kappa_n) \in \{0,1\}^{n-r}$ with $|\kappa| \leq j$, 
\[\sup_{y \in [R/L,2R/L)^{n-r}}\left|\frac{\partial^{|\kappa|}}{\partial y_{r+1}^{\kappa_{r+1}} \cdots \partial y_{n}^{\kappa_{n}}} P_j(M\circ R,Ly)\right| \ll_{n,P_j} L^{|\kappa|} R^{j-|\kappa|}  \ll R^{k-1}(L/R)^{|\kappa|}, 
\]
which suffices for (\ref{eqn:mixed_partial_P_tilde}).
For the other terms in the expansion (\ref{eqn:Q}), fix some multi-index $\al=(\al_1,\ldots,\al_n) \in \R^n$ with $1 \leq |\al| \leq k$, and recall the multi-index $\kappa=(\kappa_{r+1},...,\kappa_{n})\in \{0,1\}^{n-r}$ with $|\kappa|\geq 1$ taking derivatives only with respect to coordinates of $y=(y_{r+1},\ldots,y_n)$. For each $0 \leq j \leq k,$ for $|\kappa| \leq j - |\al|,$
    \begin{multline*} 
    \sup_{\substack{(\xi,\eta) \in [-1,1]^n\\y \in [R/L,2R/L)^{n-r}}}\left|  \frac{\partial^{|\kappa|}}{\partial y_{r+1}^{\kappa_{r+1}} \cdots \partial y_{n}^{\kappa_{n}}}(\partial_\alpha P_j)(M\circ R,Ly)\frac{(S\circ \xi,\eta)^{\alpha}}{\al!} \right|\\
    \ll_{n,k,P} L^{|\kappa|}R^{j-|\al |- |\kappa|}   S_1^{\al_1}
    \ll R^j (L/R)^{|\kappa|} S_1^{\al_1}R^{-|\al|},
    \end{multline*}
in which we recall that $S \circ \xi = (S_1\xi_1,\xi_2,\ldots,\xi_r).$ 
If $j\leq k-1$, this already suffices for (\ref{eqn:mixed_partial_P_tilde}), since $S_1 \ll R$ shows that the right-hand side is visibly $\ll R^{k-1}(L/R)^{|\kappa|}.$ (Effectively, for $j \leq k-1$ we are using that $P_j$ is already of degree strictly smaller than $k$.)

Finally for the highest degree $j=k$ piece, we must be more careful, and use the fact that in the leading form $P_k$, $X_1$ does not intertwine with the last $n-r$ coordinates, over which we are carrying out the partial summation. Consider the expression $R^{k-1} (L/R)^{|\kappa|} S_1^{\al_1}R^{-(|\al|-1)},$ recalling $|\al|\geq 1$ in the cases we currently consider. 
As explained in the example in \S \ref{sec_example_P} and the heuristics in \S \ref{sec_heuristic}, the problematic case  is when $|\alpha|=\alpha_1=1$. But since $X_1$ does not intertwine with $X_{r+1},\ldots,X_n$, $\partial_j \partial_1 P_k \con 0$ for each $j \geq r+1$. Consequently, for any $\al$ with $\al_1 \geq 1,$ for every $\kappa = (\kappa_{r+1},...,\kappa_n)\in \{0,1\}^{n-r}$ with $|\kappa|\geq 1$, \[
\frac{\partial^{|\kappa|}}{\partial y_{r+1}^{\kappa_{r+1}} \cdots \partial y_{n}^{\kappa_{n}}}(\partial_\alpha P_k)(M\circ R,Ly)\con 0.\]  Thus for the highest degree form $P_k$, we only need to verify that $R^{k-1} (L/R)^{|\kappa|} S_1^{\al_1}R^{-(|\al|-1)}\ll R^{k-1} (L/R)^{|\kappa|}$ in cases where $|\al| \geq 1$ and $\al_1=0$, and this certainly holds.  This completes the proof of (\ref{eqn:mixed_partial_P_tilde}).

\subsection{Removing nonlinear terms in the phase, to apply Fourier inversion}\label{sec:expsum_partial_int}

We return our focus to (\ref{eqn:T_int_reduced}), and now we remove all nonlinear terms in $(\xi,\eta)$ by integration by parts, in order to apply Fourier inversion. First we Taylor expand, writing 
    \begin{align*}
        Q(S\circ \xi + M\circ R, \eta + \tilde{R})
        &=\sum_{j=0}^{k-1}P_j(M\circ R,\tilde{R}) 
        + \sum_{1\leq |\alpha|\leq k} \frac{(\partial_\alpha P)(M\circ R,\tilde{R})}{\alpha !} (S\circ \xi,\eta)^{\alpha} \\
        &=(\nabla P_k)(M\circ R,\tilde{R})\cdot (S\circ \xi,\eta) + \tilde{Q}(S\circ \xi + M\circ R, \eta + \tilde{R})
    \end{align*}
where we define $ \tilde{Q}(S\circ \xi + M\circ R, \eta + \tilde{R})$ to be
   \[ \sum_{j=0}^{k-1}P_j(M\circ R,\tilde{R}) + \sum_{j=1}^{k-1}(\nabla P_j)(M\circ R,\tilde{R})\cdot(S\circ \xi,\eta) 
        + \sum_{2\leq |\alpha|\leq k} \frac{(\partial_\alpha P)(M\circ R,\tilde{R})}{\alpha !} (S\circ \xi,\eta)^{\alpha}.
 \]
Then 
 the main term of \eqref{eqn:T_int_reduced} can be written as
    \begin{multline*} 
        \Sbf(2R/L;w,t)e((M\circ R)\cdot v) \frac{1}{(2\pi)^{n}} \int_{[-1,1]^{n}} \hat{\Phi}_{n}(\xi,\eta)
        e((S\circ \xi, \eta)\cdot ((v,w)+ (\nabla P_k)(M\circ R,\tilde{R})t) )\\
        \times
        e(\tilde{Q}(S\circ \xi+M\circ R,\eta+\tilde{R}) t)   d\xi d\eta.
    \end{multline*}
We will apply integration by parts as in Lemma \ref{lem:partial_integration} to remove  the nonlinear terms in $(\xi,\eta),$ with
$
        h(\xi,\eta) := \tilde{Q}(S\circ \xi + M\circ R,\eta+\tilde{R}) t.
$
We claim that for every $\kappa = (\kappa_1,...,\kappa_n) \in \{0,1\}^n$ with $|\kappa|\geq 1$,
    \begin{align}\label{eqn:mixed_partial_P_tilde2}
     \sup_{(\xi,\eta) \in [-1,1]^n}\left|\frac{\partial^{|\kappa|}}{\partial \xi_1^{\kappa_1} \cdots \partial \xi_r^{\kappa_r}\partial \eta_{r+1}^{\kappa_{r+1}}\cdots \partial \eta_n^{\kappa_n}} h(\xi,\eta)\right| \ll R^{k-1}t =: B_\kappa,
    \end{align}
    in the notation of Lemma \ref{lem:partial_integration}.
Then by integration by parts, the integral above becomes
\[
        e(\tilde{Q}(S\circ 1+M\circ R,1+\tilde{R})t)
        \frac{1}{(2\pi)^n}\int_{\R^{n}} \hat{\Phi}_{n}(\xi,\eta)
        e((S\circ \xi, \eta)\cdot ((v,w)+ t(\nabla P_k)(M\circ R,\tilde{R})) )d\xi d\eta + E_3
   \]
    where
$
            |E_3|\ll_{\phi, n} R^{k-1} t .
$
 By Fourier inversion, the integral above evaluates precisely to 
  \[
        \Phi_n((S_1,1,\ldots,1) \circ [(x_1,x_2,...,x_n)+t(\nabla P_k)(M\circ R,\tilde{R})]).
 \] 
 Consequently, in absolute value, \eqref{eqn:T_int_reduced} is
    \beq\label{eqn:reduced_exp}
        |\Sbf(2R/L;w,t)| \cdot | \Phi_n((S_1,1,\ldots,1) \circ [(x_1,x_2,...,x_n)+t(\nabla P_k)(M\circ R,\tilde{R})])| + E_2 + E_4
    \eeq
with $E_2$   bounded as in (\ref{E2}) and
 \beq\label{E4}
        |E_4| \ll |\Sbf(2R/L;w,t)|\cdot|E_3| \ll_{\phi, n} | \Sbf(2R/L;w,t) |  R^{k-1} t.
   \eeq

 Finally, we verify (\ref{eqn:mixed_partial_P_tilde2}).    
We bound each term in the expansion defining $\tilde{Q}$. The first sum is independent of $(\xi,\eta)$ so it vanishes when we take partials $\partial_\kappa$ with $|\kappa| \geq 1$. For each $1\leq j \leq k-1$, recall that $(S \circ \xi)=(S_1\xi_1,\xi_2,\ldots,\xi_r)$ and  $\tilde{R} \ll R$, so that at most
 \[
       \sup_{(\xi,\eta) \in [-1,1]^n}\left|\frac{\partial^{|\kappa|}}{\partial \xi_1^{\kappa_1} \cdots \partial \xi_r^{\kappa_r}\partial \eta_{r+1}^{\kappa_{r+1}}\cdots \partial \eta_n^{\kappa_n}} (\nabla P_j)(M\circ R, \tilde{R})\cdot(S\circ \xi,\eta) \right|
       \ll  R^{j-1}S_1
       \ll R^{k-2}S_1.
\]
This is     $\ll R^{k-1}$ as desired, since $S_1=R^\sig < R$.
Now fix a multi-index $2\leq |\alpha|\leq k$. For each $0\leq j \leq k$,
\[
         \sup_{(\xi,\eta) \in [-1,1]^n}\left|\frac{\partial^{|\kappa|}}{\partial \xi_1^{\kappa_1} \cdots \partial \xi_r^{\kappa_r}\partial \eta_{r+1}^{\kappa_{r+1}}\cdots \partial \eta_n^{\kappa_n}} \frac{(\partial_\alpha P_j)(M\circ R, \tilde{R})}{\alpha !} (S\circ \xi,\eta)^{\alpha} \right|
        \ll R^{j-|\alpha|}S_1^{\alpha_1}.
   \]
For all the lower-degree terms with $j \leq k-1,$ this is $ \ll R^{k-1}S_1^{\alpha_1}R^{-|\alpha|}\ll R^{k-1}.$
Finally, consider $R^{k-|\alpha|}S_1^{\alpha_1}$ in the case $2\leq |\alpha| \leq k$ and $j=k$. (Note that unlike in the partial summation in the previous section, we do not need to consider the case $|\al|=1$ and $j=k$, which is singled out as the main term here.) 
If $\alpha_1=0$, this is $\ll R^{k-1}$. If $|\alpha|>\alpha_1\geq 1$, this is $\ll R^{k-1}(S_1/R)$. If $|\alpha| = \alpha_1\geq 2$, then this is $\ll R^{k}(S_1/R)^2=R^{k-1}(S_1^2/R)$. For this to be $\ll R^{k-1}$ we impose $S_1=R^\sig$ with
 \beq\label{eqn:cond_sigma}
        \sigma \leq 1/2.
    \eeq
 This proves (\ref{eqn:mixed_partial_P_tilde2}) and hence (\ref{eqn:reduced_exp}).

\subsection{Restrictions on $t$ to complete the proof of Proposition \ref{prop:approx_maximal}}\label{sec:expsum_proof} 
Recall from  \eqref{eqn:phi_delta} that $\phi(y) \geq 1-c_0/2$ if $|y| \leq \del_0.$ 
Thus
$\Phi_n((S_1,1,\ldots,1) \circ [(x_1,x_2,...,x_n)+t(\nabla P_k)(M\circ R,\tilde{R})]) \geq (1-c_0/2)^n$, which suffices for Proposition \ref{prop:approx_maximal}, if we choose 
   \begin{align}\label{eqn:cond_t_1}
        t=\frac{-x_1}{(\partial_1 P_k)(M\circ R,\tilde{R})} + \tau, \qquad |\tau| \leq \frac{c_2\delta_0}{S_1 R^{k-1}}
    \end{align}
where $c_2<1$ is sufficiently small that $c_2|(\partial_jP_k)(M\circ R, \tilde{R})|/R^{k-1}<1/2$ for each $1 \leq j \leq n$, and furthermore restrict 
$x\in [-c_1,c_1]^n$ where $0<c_1<\delta_0/4$ and $c_1|(\partial_jP_k)(M\circ R, \tilde{R})/(\partial_1P_k)(M\circ R, \tilde{R})|<\del_0/4$ for each $2 \leq j \leq n$. (Recall from Lemma \ref{lem:coefficients} that $M$ is chosen so that $|(\partial_1 P_k)(M\circ R,\tilde{R})| \gg R^{k-1} \neq 0$ for all $R$, and in the other direction it is always true that $|(\partial_j P_k)(M\circ R,\tilde{R})| \ll R^{k-1}$ for each $1 \leq j \leq n.$) For then, simultaneously 
\[
        |S_1(x_1+(\partial_1 P_k)(M\circ R,\tilde{R}) t)| = |S_1\cdot (\partial_1 P_k)(M\circ R,\tilde{R})\tau| <\delta_0/2,
    \]
and for $j=2,...,n$,
 \[
        |x_j+(\partial_j P_k)(M\circ R,\tilde{R}) t|< \del_0/4 +\del_0/4 +  \delta_0/2.
 \]
    In fact if $x_1\in [-c_1,-c_1/2]$ then we can ensure $t\in (0,1)$ since for appropriate $c_1,c_2,$ for all sufficiently large $R\geq R_2=R_2(c_1,c_2,\sigma, P,k)$, we have $c_1/(\partial_1 P_k)(M\circ R,\tilde{R}) + c_2/(S_1R^{k-1})<1$ and $c_1/(2(\partial_1 P_k)(M\circ R,\tilde{R})) - c_2/(S_1R^{k-1})>0$.

Finally, to bound $E_2$ and $E_4$, we impose the condition that
    \begin{align}\label{eqn:cond_t}
    t\leq \frac{c_3}{R^{k-1}}
    \end{align}
for some small constant $c_3$ of our choice, and then $\mathbf{E}_1 \ll E_2 + E_4$ is bounded as stated in Proposition \ref{prop:approx_maximal}.
For conditions \eqref{eqn:cond_t} and \eqref{eqn:cond_t_1} to be compatible, we verify that we can choose $c_1$ and $c_2$ so that $|R^{k-1}t|\leq c_1(R^{k-1}/(\partial_1 P_k)(M\circ R,\tilde{R}))+c_2(\delta_0/S_1)$ is as small as we want.

\section{The arithmetic contribution}\label{sec:arithmetic}
We now estimate the exponential sum $\Sbf(2R/L;w,t)$ which we extracted from $T_t^{P}f(x)$ in Proposition \ref{prop:approx_maximal}. It is convenient to define the notation:
    \begin{align}\label{eqn:change_var}
        s:=L^k\tau, \qquad 
        y_1:= -\frac{ L^k}{(\partial_1 P_k)(M\circ R,\tilde{R})}x_1 \modd{2\pi}, \qquad 
        y_j:= Lx_j \modd{2\pi}, 
    \end{align}
for $r+1\leq j \leq n$, and set $y=(y_{r+1},\ldots,y_n)$. (Implicitly, we also take $y_j=x_j \modd{2\pi}$ for $2 \leq j \leq r,$ although these variables do not play a role here.)
Then $y_1+s=L^k t \modd{2\pi}$ and so by recalling $w=(x_{r+1},\ldots,x_n)$ and using homogeneity of $P_k$,
    \begin{align*}
        \Sbf(2R/L;w,t) 
      &  = \sum_{\substack{m \in \Z^{n-r} \\R/L \leq m_j < 2R/L}} e(Lm\cdot w + L^k P_k(M\circ (R/L),m) t)\\
       & = \sum_{\substack{m \in \Z^{n-r} \\R/L \leq m_j < 2R/L}} e(m\cdot y +  P_k(M\circ (R/L),m)(y_1+s)).
    \end{align*}
  
\begin{remark}\label{remark_integer}  Here we encounter a feature that has not arisen in previous special cases of $P_k$ considered in \cite{ACP23,EPV22a}, for which the first coordinate only appears in $P_k$ in a diagonal term $X_1^k$, so that it can be pulled out of the exponential sum entirely.   In our general setting, to evaluate the sum using number-theoretic methods, we require each $M_iR/L$ to be an integer. We will consider a sequence of $R=R_j \maps \infty$ where each  $R_j=2^j$ for a sequence of integers $j$.  Since $L=R^\lambda$ for some $0< \lambda <1$, we have $R/L=2^{j(1-\lambda)}$ which is an integer if and only if $j\lambda$ is an integer. We will achieve this by choosing $\lambda=\lambda_1/\lambda_2$ to be rational, and then restricting to $j \maps \infty$ along the arithmetic progression $j \equiv 0 \modd{\lambda_2}$. 
 \end{remark}

For now, we assume $R$ and $L$ are fixed, and $R/L$ is an integer. We define for any prime $q$, and $a\in \F_q,b \in \F_q^{n-r},$ the complete exponential sum
\[ \bfT(a,b;q)
        = \sum_{m \modd q^{n-r}} e\left(\frac{2\pi}{q}\left(b \cdot m+a P_k(M\circ (R/L),m) \right)\right).\]
Let $K_1(P_k)$ be the constant provided by Lemma \ref{lemma_Dwork_reduced}, so that the reduction of $P_k(X_1,\ldots,X_n)$ is Dwork-regular over $\F_q$ in $X_1,\ldots,X_n$ for every prime $q \geq K_1(P_k).$ Then for such $q$, by 
Proposition \ref{prop:dwork_deligne}, $P_k(M\circ (R/L),X_{r+1},\ldots, X_n) $ is a Deligne polynomial in $X_{r+1},\ldots,X_n$ over $\F_q$. Thus we will apply Proposition \ref{prop:bound_T} to show that for many $a,b$ the sum $\bfT(a,b;q)$ must be ``large.'' Consequently, we will construct a set $\Omega$ of $(y_1,\ldots,y_n)\in [0,2\pi]^n$ with (nearly) full measure on which $|\mathbf{S}(2R/L;w,t)|$, and hence $|T_t^{P}f(x)|,$ is large. We state two propositions, first focused on the set $\Omega$, and then focused on the operator $T_t^{P}f(x).$

   \begin{prop}\label{prop:Omega} Let $R,L$ be fixed with $R/L$ an integer. There exists a parameter $K_3(P_k,k)$ such that the following holds for all $Q > K_3$. For every prime $q\in [Q/2,Q]$, let $\mathcal{G}(q)$ denote the set of all $(a,b) \in \F_q \times \F_q^{n-r}$ for which
\[
       (1/2) q^{(n-r)/2} \leq     |\bfT(a,b;q)|\leq (k-1)^{n-r}q^{(n-r)/2}.
\]
For any $0<c_4,c_5<1$   sufficiently small of our choice, define a set $\Omega \subseteq [0,2\pi]^n$ by
     \[
            \Omega = \Union_{\substack{Q/2 \leq q \leq Q\\\text{$q$ prime}}} \hspace{1em}
 \Union_{(a,b)\in \mathcal{G}(q) } B(a,b;q),
      \]
        where each box B(a,b;q) is defined to be
        \[ \{|y_1-2\pi a/q|\leq c_4q^{-1}, (y_2,\ldots,y_r) \in [0,c_1]^{r-1}, |y_j-2\pi b_j/q|\leq c_5q^{-1-1/(n-r)}, r+1 \leq j \leq n\}.\]
    Then $
            |\Omega| \gg_{c_4,c_5,n,k} (\log Q)^{-1}.
$
    \end{prop}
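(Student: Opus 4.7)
The plan is a second-moment (Cauchy--Schwarz) argument. Let
\[ f(y) := \sum_{q \text{ prime in }[Q/2,Q]} \sum_{(a,b) \in \mathcal{G}(q)} \chi_{B(a,b;q)}(y). \]
Since $f \geq 0$ and is supported in $\Omega$, Cauchy--Schwarz gives $\int f \leq (\int f^2)^{1/2}|\Omega|^{1/2}$, hence $|\Omega| \geq (\int f)^2/\int f^2$; it suffices to estimate the first and second moments. To ensure $\mathcal{G}(q)$ is populated, I would take $K_3 := \max\{k, K_1(P_k), K_2(k,n-r)\}$, where $K_1$ is from Lemma \ref{lemma_Dwork_reduced} (so $P_k$ reduces modulo $q$ to a Dwork-regular form over $\F_q$ for all primes $q > K_1$) and $K_2$ is from Proposition \ref{prop:bound_T}. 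For such $q$, Proposition \ref{prop:dwork_deligne} shows that $P_k(M\circ(R/L),X_{r+1},\ldots,X_n)$ is a Deligne polynomial over $\F_q$, and Proposition \ref{prop:bound_T} applied in dimension $m = n-r$ yields $|\mathcal{G}(q)| \geq \alpha_2 q^{n-r+1}$. Chebyshev's estimate supplies $\gg Q/\log Q$ eligible primes in $[Q/2,Q]$.

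Each box has Lebesgue measure $|B(a,b;q)| = C_0\, q^{-(n-r+2)}$ with $C_0 = (2c_4)(2c_5)^{n-r}c_1^{r-1}$, so
\[ \int f \;=\; \sum_q |\mathcal{G}(q)|\cdot C_0 q^{-(n-r+2)} \;\geq\; \alpha_2 C_0 \sum_{q\text{ prime in }[Q/2,Q]} q^{-1} \;\gg\; (\log Q)^{-1}, \]
where the last step uses that each of the $\gg Q/\log Q$ primes contributes $q^{-1}\asymp Q^{-1}$, the implicit constant depending on $c_1,c_4,c_5,n,k$.

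For the second moment $\int f^2 = \sum_{B,B'}|B\cap B'|$, I split into diagonal and off-diagonal contributions. The diagonal contributes $\sum_B |B| = \int f$. Two boxes from the same prime $q$ with $(a,b)\neq(a',b')$ are disjoint: their centers differ by at least $2\pi/q$ in some coordinate, while box widths are at most $2c_4/q$ in $y_1$ and at most $2c_5 q^{-1-1/(n-r)}$ in each $y_j$ ($j\geq r+1$), both strictly less than $2\pi/q$ once $c_4,c_5 < \pi$. For distinct primes $q\neq q'\in[Q/2,Q]$, I count intersecting pairs of fractions coordinate by coordinate: among the $qq' \asymp Q^2$ points $(2\pi a/q,2\pi a'/q')\in[0,2\pi)^2$, those lying within $\ll c_4/Q$ of the diagonal number $\ll c_4 Q$, giving that many intersecting pairs $(a,a')$ in $y_1$; similarly, for each $y_j$ with $j\geq r+1$, the diagonal strip has width $\ll c_5 Q^{-1-1/(n-r)}$ and produces $\ll c_5 Q^{1-1/(n-r)}$ intersecting pairs $(b_j,b'_j)$. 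Multiplying over the $1+(n-r)$ relevant coordinates yields $\ll c_4 c_5^{n-r} Q^{n-r}$ intersecting quadruples per fixed $(q,q')$, each of intersection measure $\leq C_0 Q^{-(n-r+2)}$. Summing over the $(Q/\log Q)^2$ prime pairs bounds the cross-prime off-diagonal contribution by $\ll c_4 c_5^{n-r} C_0 (\log Q)^{-2}$, which is lower-order than the diagonal $\asymp C_0/\log Q$ for $Q \geq K_3$ large enough (or for $c_4,c_5$ small enough). Thus $\int f^2 \ll C_0/\log Q$.

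Combining the moment estimates through Cauchy--Schwarz gives $|\Omega| \geq (\int f)^2/\int f^2 \gg_{c_4,c_5,n,k}(\log Q)^{-1}$, as desired. The main technical step is the cross-prime intersection count in the third paragraph: the saving $c_4 c_5^{n-r}$ is what forces the off-diagonal to be strictly lower order than the square of the first moment, and it relies on the quantitative observation that among pairs of fractions with distinct prime denominators of size $\asymp Q$, only a thin strip of the $Q^2$ pairs in the plane are close enough to force overlap of two narrow boxes.
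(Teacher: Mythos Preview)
Your proof is correct and follows essentially the same route as the paper: the paper invokes a ``well-distributed boxes'' lemma (their reference [ACP23, Lemma~4.1]), which is precisely the Cauchy--Schwarz argument you carry out directly, and the cross-prime count of intersecting box pairs is the same in substance. The only point worth tightening is your ``diagonal strip'' justification for the number of pairs $(a,a')$ with $|a/q-a'/q'|\ll c_4/Q$: the paper makes this rigorous by observing that for distinct primes $q,q'$ each integer $m$ admits at most one representation $m=aq'-a'q$ with $1\le a\le q$, $1\le a'\le q'$, so the number of such pairs is bounded by the number of admissible values of $m$.
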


  Next, recall that $L=R^\lam$ for a parameter $0<\lam<1$ we will choose later, and from now on we let $Q=R^\kappa$ for a parameter $0<\kappa<1$ we will choose later. 
\begin{prop}\label{prop:evaluate_S}
Let $R,L$ be fixed with $R/L$ an integer. Suppose
    \begin{align}\label{eqn:cond_LQRS}
        \frac{1}{Q} \ll \frac{L^{k}}{S_1R^{k-1}}, 
        \qquad \frac{1}{Q^{1+1/(n-r)}} \ll (R/L)^{-1}, 
        \qquad R/L \gg Q^{1+\Delta_0}
    \end{align}
where $0<\Delta_0\leq 1/(n-r)$.
Assume $R,Q$ are sufficiently large, say $R \geq R_3(\delta_0,c_1,c_2,k,P,\sigma,\lam,\kappa)$ and $Q \geq K_4(n,k,r,P_k,\Del_0)$. Then there exists a set $\Omega^* \subseteq B_n(0,1)$ with
 $
        |\Omega^*| \gg_{c_1,c_4,c_5,n,k,r} (\log Q)^{-1}$ and such that
for every $x\in \Omega^*$, there exists a $t\in (0,1)$ such that
 \[
        |T_t^{P}f(x)|
        \geq (1-c_0)^n  2^{-(n-r)-1}\bigg(\frac{R}{LQ^{1/2}}\bigg)^{n-r} -| \Ebf_1| - |\Ebf_2|
\]
where
 \[
        |\Ebf_1| + |\Ebf_2| \ll_{\phi,n,k,r,P} (c_3+c_5+Q^{-\Delta_0/2})\bigg(\frac{R}{LQ^{1/2}}\bigg)^{n-r}.
\]
\end{prop}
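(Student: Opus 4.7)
The plan is to pull back the set $\Omega \subseteq [0,2\pi]^n$ from Proposition \ref{prop:Omega} to $x$-space via the change of variables (\ref{eqn:change_var}), and for each resulting $x$ to choose a stopping time $t$ that simultaneously (i) meets the tolerance required in Proposition \ref{prop:approx_maximal} and (ii) forces $L^k t \equiv 2\pi a/q \pmod{2\pi}$, where $q$ and $a$ label the box $B(a,b;q)$ containing the image of $x$. Concretely, one sets $t_0 := -x_1/(\partial_1 P_k)(M\circ R, \tilde R)$ and picks $s \in [-\pi,\pi]$ with $s \equiv 2\pi a/q - y_1 \pmod{2\pi}$; then $\tau := s/L^k$ and $t := t_0 + \tau$ works. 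The box condition $|y_1 - 2\pi a/q| \leq c_4/q$ forces $|\tau| \leq c_4/(qL^k)$, which via the first inequality of (\ref{eqn:cond_LQRS}) is at most $c_2 \delta_0/(S_1 R^{k-1})$ once $c_4$ is chosen small relative to $c_2$; Lemma \ref{lem:coefficients} supplies $|t_0| \ll c_1/R^{k-1}$, so taking $c_1,c_3$ small keeps $t \in (0, c_3/R^{k-1})$. All three constraints of (\ref{eqn:t_constraints}) are thereby met, and Proposition \ref{prop:approx_maximal} applies.

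It remains to lower-bound $|\mathbf{S}(2R/L; w, t)|$. Using (\ref{eqn:change_var}), degree-$k$ homogeneity of $P_k$, and $L^k t \equiv 2\pi a/q$, the sum takes the form
\[
\mathbf{S}(2R/L; w, t) = \sum_{R/L \leq m_j < 2R/L} e\Bigl( m \cdot y + \tfrac{2\pi a}{q}\, P_k(M\circ (R/L), m) \Bigr),
\qquad |y_j - \tfrac{2\pi b_j}{q}| \leq V := c_5 q^{-1-1/(n-r)}.
\]
By Lemma \ref{lemma_Dwork_reduced} and Proposition \ref{prop:dwork_deligne}, $P_k(M \circ (R/L), \cdot)$ is a Deligne polynomial in $X_{r+1},\ldots,X_n$ over $\F_q$ once $q$ exceeds a finite threshold. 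The second inequality of (\ref{eqn:cond_LQRS}) gives $VN \leq 1$ for $N = R/L$, licensing Proposition \ref{prop:2.7}; the third inequality gives $\lfloor (R/L)/q \rfloor \gg Q^{\Delta_0}$. Combined with Remark \ref{remark_size_of_V}, these yield
$\mathbf{S}(2R/L; w, t) = \lfloor (R/L)/q \rfloor^{n-r} \mathbf{T}(a,b;q) + E$ with $|E| \ll (R/(LQ^{1/2}))^{n-r}(c_5 + Q^{-\Delta_0/2})$. Since $(a,b) \in \mathcal{G}(q)$ forces $|\mathbf{T}(a,b;q)| \geq q^{(n-r)/2}/2$, and $q \leq Q$, one obtains the stated main term $2^{-(n-r)-1}(R/(LQ^{1/2}))^{n-r}$. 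Running the same analysis on every partial sum $\mathbf{S}(u; w, t)$ with $u \in [R/L,2R/L]^{n-r}$ absorbs $|\mathbf{E}_1|$ into $\ll c_3(R/(LQ^{1/2}))^{n-r}$.

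The set $\Omega^* \subseteq B_n(0,1)$ is defined as the pre-image of $\Omega$ under (\ref{eqn:change_var}), intersected with $(-c_1,-c_1/2] \times [0,c_1]^{r-1} \times [-c_1,c_1]^{n-r}$. The slope in $x_1$ is $L^k/|(\partial_1 P_k)(M\circ R, \tilde R)|$ and in each $x_j$ (for $j \geq r+1$) is $L$; because the rational centers $2\pi a/q$ of the boxes $B(a,b;q)$ are approximately equidistributed on $[0,2\pi]$ as $a$ runs through all residues modulo $q$, a standard Fubini/wrapping argument transfers $|\Omega| \gg (\log Q)^{-1}$ into $|\Omega^*| \gg_{c_1,c_4,c_5,n,k,r} (\log Q)^{-1}$, independently of the precise magnitudes of these slopes. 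The principal obstacle is the compatibility spelled out in the first paragraph: forcing $L^k t$ to be exactly a rational with denominator $q \approx Q$, while keeping $\tau$ inside the tolerance $c_2\delta_0/(S_1 R^{k-1})$. This balance is exactly what the first inequality of (\ref{eqn:cond_LQRS}) is designed to guarantee.
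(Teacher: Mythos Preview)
Your proposal is correct and follows essentially the same route as the paper: pull back $\Omega$ via (\ref{eqn:change_var}), pick $s$ so that $y_1+s=2\pi a/q$ exactly, verify (\ref{eqn:t_constraints}) using the first inequality of (\ref{eqn:cond_LQRS}), and then apply Proposition~\ref{prop:2.7} (with Remark~\ref{remark_size_of_V}) both to $\mathbf{S}(2R/L;w,t)$ and to the partial sums $\mathbf{S}(u;w,t)$ governing $\mathbf{E}_1$. One small correction: the measure transfer is \emph{not} independent of the slope magnitudes---the wrapping argument for $|\Omega^*|\gg(\log Q)^{-1}$ requires the $x_1$-slope $L^k/|(\partial_1 P_k)(M\circ R,\tilde R)|\asymp L^k/R^{k-1}$ to be at least of order one, i.e.\ $\lambda>(k-1)/k$, a condition the paper records explicitly and which does hold for the final choice of $\lambda$.
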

By taking $R$ sufficiently large and choosing the absolute constants $c_3, c_5$ sufficiently small, we will ultimately make the error terms  less than half the size of the main term. We prove the propositions and then turn in \S \ref{sec:parameters} to the final choice of parameters, optimizing the counterexample and completing the proof of Theorem \ref{thm:main3}.

\subsection{Proof of Proposition \ref{prop:Omega}}
Since $M \circ (R/L) \in \Z^{r}$ is fixed and $P_k$ is Dwork-regular, by Proposition \ref{prop:dwork_deligne},  $P_k(M\circ (R/L),X_{r+1},\ldots, X_n) $ is a Deligne polynomial in $X_{r+1},\ldots,X_n$ over $\F_q$ for all primes $q \geq K_1(P_k).$   By Proposition \ref{prop:bound_T}, for all   primes $q>\max\{k,K_1,K_2\}$,  $\al_2 q^{n-r+1} \leq |\Gcal(q)| \leq q^{n-r+1}$ for some $\al_2>0$ depending only on $n,k,r$. 
We have defined the set $\Omega$ accordingly, so that for $(y_1,\ldots,y_n)\in  B(a,b;q)$ for some $(a,b) \in \mathcal{G}(q),$ a small $s$ can be chosen so that $y_1 + s=2\pi a/q$ precisely, and $(y_{r+1},\ldots,y_n)$ is well-approximated by $(2\pi b_{r+1}/q,\ldots,2\pi b_n/q)$. We will use this Diophantine behavior in Proposition \ref{prop:evaluate_S} to show that for $(y_1,\ldots,y_n) \in B(a,b;q)$, $\mathbf{S}(2R/L;w,t)$ is well-approximated by $\lfloor R/Lq\rfloor^{n-r}$ copies of $\mathbf{T}(a,b;q)$ and is hence $\gg \lfloor R/Lq\rfloor^{n-r} q^{(n-r)/2}$. For now we compute a lower bound on the measure of $\Omega$. This is not immediate, since if $n-r>1,$ many of the boxes $B(a,b;q)$ can overlap as $q$ varies; our construction of the set $\mathcal{G}(q)$ is also completely inexplicit about which $a,b$ are chosen. However, we can apply \cite[Lemma 4.1]{ACP23}, which shows that if a set of boxes is ``well-distributed,'' then the measure of their union is at least a positive proportion of the sum of their measures. 

Precisely, choose $K_0$ sufficiently large that (by the prime number theorem) $\pi(x) \geq (1/4) x/\log x$ for all $x \geq K_0/2$. Let $\mathcal{I}$ denote the index set of $(a,b;q)$ over which the unions are taken in the definition of $\Omega.$ Then by \cite[Lemma 4.1(ii)]{ACP23}, 
\begin{align*}
|   \Omega| &= |\Union_{\substack{Q/2 \leq q \leq Q\\\text{$q$ prime}}} \hspace{1em}
 \Union_{(a,b)\in \mathcal{G}(q) } B(a,b;q)|
    \gg 
        \sum_{\substack{Q/2 \leq q \leq Q\\ \text{$q$ prime}}} \hspace{1em}
 \sum_{(a,b)\in \mathcal{G}(q) } |B(a,b;q)|\\
& \gg  (Q/\log Q) (\al_2 Q^{n-r+1}) Q^{-1} (Q^{-1-1/(n-r)})^{n-r} \gg (\log Q)^{-1}
\end{align*}
(with implied constants depending  on $c_1,c_4,c_5,n,k,r$) as long as 
all the boxes $B(a,b;q)$ have comparable size and  
 \beq\label{welldist}
 \# \{ (a,b;q), (a',b';q') \in \mathcal{I} : B(a,b;q) \intersect B(a',b';q') \neq \emptyset \} \ll |\mathcal{I}|,
 \eeq
 with an acceptable implied constant. It is clearly true that $1 \ll |B(a,b;q)|/|B(a',b';q')|\ll 1$ for all pairs of indices $(a,b;q)$ and $(a',b';q') \in \mathcal{I}$. The bound (\ref{welldist}) is a statement that the boxes are well-distributed since a trivial upper bound for the cardinality would be $|\mathcal{I}|^2$; on the other hand, if all the boxes were pairwise disjoint, the cardinality would be precisely $|\mathcal{I}|.$ 
 
 It   remains to verify (\ref{welldist}). Upon requiring $Q > 2\max\{k,K_0,K_1,K_2\}$ and recalling the construction of the sets $\mathcal{G}(q)$, $ (Q/\log Q) Q^{n-r+1}\ll |\mathcal{I}| \ll  (Q/\log Q) Q^{n-r+1}.  $
 The contribution to (\ref{welldist}) when $(a,b;q)=(a',b';q')$ as tuples is clearly $\ll \mathcal{I},$ so we consider instead the case when the tuples are distinct, and we suppose that $ B(a,b;q) \intersect B(a',b';q') \neq \emptyset,$ so that in particular,
 \begin{align*}
     |a/q-a'/q'| & \leq   \frac{c_4}{2\pi}(1/q + 1/q'),    \\
     |b_j/q - b_j'/q'| & \leq   \frac{c_5}{2\pi}(1/q^{1+1/(n-r)} + 1/(q')^{1+1/(n-r)}), \qquad r+1 \leq j \leq n.
 \end{align*}
 If $q = q'$ then by taking $c_4, c_5<1$, the above relations impose $|a-a'|<1$ and $|b_j - b_j'|<1$ for all $j$, so that $(a,b;q)=(a',b';q')$, which is a case we already considered. So we now assume $q \neq q'$ are distinct primes in $[Q/2,Q]$. Then the above relations show that 
 \begin{align}
     |aq'-a'q| & \leq   \frac{c_4}{\pi}Q,  \nonumber  \\
     |b_jq' - b_j'q| & \leq   \frac{c_5C}{\pi}Q^{1-1/(n-r)}, \qquad r+1 \leq j \leq n \label{relations}
 \end{align}
where $C$ is a constant depending on $n,r$. Recall that given an integer $m$, and distinct primes $q,q'$, there is a unique choice of a pair $a,a'$ with $1 \leq a \leq q$ and $1 \leq a' \leq q'$ with $aq'-a'q=m.$ Indeed, if there were another representation $a_0, a_0'$ then $aq'-a'q=a_0q'-a_0'q$ would imply that $(a-a_0)q'=(a'-a_0')q$, so that $q' | (a'- a_0')$ and $q|(a-a_0)$, implying $a=a_0$ and $a' =a_0'$, as claimed. Thus once a tuple $(m_1,m_{r+1},\ldots,m_n)$ of integers is chosen with $|m_1| \leq (c_4/\pi)Q$  and $|m_j| \leq (c_5C/\pi)Q^{1-1/(n-r)}$ for each $j=r+1,\ldots,n$, there is at most one choice of a pair $(a,b)\in \F_q \times \F_q^{n-r}$ and $(a',b')\in \F_{q'} \times \F_{q'}^{n-r}$ satisfying the $n-r+1$ relations in (\ref{relations}) above. Taking into account all possible  values of such  $(m_1,m_{r+1},\ldots,m_n)$, this shows that given $q\neq q'$, at most \[\ll_{c_4,c_5,n,r} Q \cdot (Q^{1-1/(n-r)})^{n-r} \ll Q^{n-r}\]
 pairs of index tuples $(a,b;q)$ and $(a',b';q')$ can have $ B(a,b;q) \intersect B(a',b';q') \neq \emptyset$. 
 Taking a union over all pairs of primes $q \neq q' \in [Q/2,Q]$ bounds the left-hand side of (\ref{welldist}) by $\ll (Q/\log Q)^2 Q^{n-r} \ll |\mathcal{I}|.$
 This proves (\ref{welldist}) and completes the proof of Proposition \ref{prop:Omega}.

\subsection{Proof of Proposition \ref{prop:evaluate_S}}\label{sec_Omega_star} 
The existence and measure of the set $\Omega^* \subseteq B_n(0,1)$ follows directly from the construction of the set $\Omega$ in Proposition \ref{prop:Omega}. Indeed, $\Omega^*$ is defined to be the set of those $x \in B_n(0,1)$ such that   the change of variables  defining the $y$-coordinates in (\ref{eqn:change_var}) map $x$ to a point $(y_1,y_2,\ldots,y_n) \in \Omega$. To bound the measure of $\Omega^*$ from below, one only needs to compute the measure of the pre-image of $\Omega$ under the change of variables (\ref{eqn:change_var}). Under the assumption that $\lam> (k-1)/k$ (which will hold for our final choice of $\lam$), this simple rescaling argument follows  precisely the argument given in \cite[\S 4.5]{ACP23}, and we omit it. 

For every $x \in \Omega^*$ there is thus a corresponding $y =(y_1,\ldots,y_n) \in \Omega$ such that $y \in B(a,b;q)$ for some tuple $a,b,q$ for which $(1/2) q^{(n-r)/2} \leq |\mathbf{T}(a,b;q)| \ll_k q^{(n-r)/2}.$ By the construction of the box $B(a,b;q)$, $|y_1 - 2\pi a/q| \leq c_4 q^{-1}$. Thus we may \emph{choose} a value of $s$ with $|s| \leq c_4q^{-1}$, such that $y_1 + s=2\pi a/q$ exactly. We make this choice for $s$ (which corresponds precisely via (\ref{eqn:change_var}) to a choice of the time parameter $t$).
Again by  the construction of the box $B(a,b;q)$, $|y_j -2\pi b_j/q| \leq c_5 q^{-1-1/(n-r)}$ for each $r+1 \leq j \leq n$, so an application of  Proposition \ref{prop:2.7} with $N_j=R/L$ for all $r+1\leq j \leq n$ and $V=c_5 q^{-1-1/(n-r)}$,  yields
 \[
         \Sbf(2R/L;w,t) 
            =\left\lfloor \frac{R}{Lq} \right\rfloor^{n-r} \bfT(a,b;q) + \mathbf{E}_2
\]
  with $\mathbf{E}_2=E$ as in the proposition. 
 For every $q \in [Q/2,Q],$ in the notation of Proposition \ref{prop:2.7} we have $V\ll c_5 N^{-1}$ by the second constraint in (\ref{eqn:cond_LQRS}), and $N \gg q^{1+\Del_0}$ by the third constraint in (\ref{eqn:cond_LQRS}). Hence we may apply the simplified upper bound for the error term given in 
Remark \ref{remark_size_of_V}, which in the present setting yields
   \beq\label{E2_work}
         |\mathbf{E}_2| \ll   \left\lfloor \frac{R}{Lq} \right\rfloor^{n-r}
          q^{(n-r)/2} (c_5   + \left\lfloor \frac{R}{Lq} \right\rfloor^{-1}
            (\log q)^{n-r} )  \ll (c_5+Q^{-\Del_0/2}) \left\lfloor \frac{R}{LQ^{1/2}}\right\rfloor^{n-r}.
   \eeq  
 Here we have applied the third condition in \eqref{eqn:cond_LQRS} to see that for all $Q$ sufficiently large,
   \[ \left(\frac{R}{LQ}\right)^{-1}(\log Q)^{n-r} \ll Q^{-\Del_0}  (\log Q)^{n-r} \ll Q^{-\Del_0/2}. \]
  Finally, for all $R$ sufficiently large (with respect to $\lam,\kappa$), $\lfloor \frac{R}{Lq} \rfloor \geq (1/2) R/Lq$, so that  the main term of  $|\Sbf(2R/L;w,t)|$  satisfies
    \begin{align*}
      \left\lfloor \frac{R}{Lq} \right\rfloor^{n-r}  |\bfT(a,b;q)|
        \geq 2^{-(n-r)-1} \left( \frac{R}{Lq}\right)^{n-r} q^{(n-r)/2} 
        \geq 2^{-(n-r)-1}\bigg(\frac{R}{LQ^{1/2}}\bigg)^{n-r}.
    \end{align*}

The last step of proving Proposition \ref{prop:evaluate_S} is to  control the error term $\mathbf{E}_1$ from    Proposition \ref{prop:approx_maximal}. By that proposition,
    $  |\Ebf_1| \ll
        \sup_{R/L\leq u_j \leq 2R/L} c_3|\Sbf(u;w,t)|,
    $
    and thus it will suffice to prove that uniformly in $R/L\leq u_j \leq 2R/L,$ the sum obeys the upper bound $|\Sbf(u;w,t)|\ll (R/LQ^{1/2})^{n-r}.$ For this we can again apply Proposition \ref{prop:2.7} with $N_j=u_j$ and $V=c_5 q^{-1-1/(n-r)}$,  so that
 \[
         |\Sbf(u;w,t) |
            =\prod_{j=r+1}^{n}\left\lfloor \frac{u_j}{q} \right\rfloor \cdot |\bfT(a,b;q)| + E_5
\]
  with $E_5=E$ as in the proposition. We apply the bound $|\bfT(a,b;q)|\ll_k q^{(n-r)/2}$, valid for each pair $(a,b) \in \mathcal{G}(q)$. Upon noting that the expressions for both the main term and $E$ given in Proposition \ref{prop:2.7} are increasing as each range $N_j$ increases, we bound both from above by taking $u_j = 2R/L$ in each case. Hence we may in fact apply the upper bound (\ref{E2_work}) also to $E_5$. In conclusion, 
\[
        |\Ebf_1| 
        \ll c_3\left\lfloor \frac{R}{Lq}\right\rfloor^{n-r} q^{(n-r)/2} + c_3(c_5+Q^{-\Delta_0/2})\left( \frac{R}{LQ^{1/2}}\right)^{n-r} \ll c_3\left( \frac{R}{LQ^{1/2}}\right)^{n-r}.
 \]

\section{Choosing parameters and conclusion of the proof}\label{sec:parameters}
From Proposition \ref{prop:evaluate_S}, by taking $R$ sufficiently large (relative to $\phi,\delta_0,c_1,c_2,n,k,r,P,\sigma,\lam,\kappa$) and choosing $c_3,c_5$ sufficiently small (relative to $\phi,c_0,n,k,r,P$) we may conclude that under the hypotheses of the proposition,
    \begin{align*}
        \sup_{0<t<1}|T_t^{P}f(x)| \geq \frac{1}{2}(1-c_0)^n 2^{-(n-r)-1} \left( \frac{R}{LQ^{1/2}}\right)^{n-r}.
    \end{align*}
It then follows from the measure of $\Omega^*$ and the computation (\ref{f_norm}) of $\|f\|_{L^2}$ that
    \begin{align*}
        \frac{\|\sup_{0<t<1}|T_t^{P}f(x)|\|_{L^1(B_n(0,1))}}{\|f\|_{L^2}} \gg  \left( \frac{R}{LQ^{1/2}}\right)^{n-r}S_1^{1/2}(R/L)^{-(n-r)/2}(\log Q)^{-1}.
    \end{align*}
    Set 
    \[\del(n,k,r)=\frac{n-r}{4((k-1)(n-(r-1))+1)}.\]
To finish the proof of Theorem \ref{thm:main3}, it suffices to show that we can define the parameters $S_1 = R^\sig$, $L=R^\lam$ and $Q=R^\kappa$  such that $R/L$ is an integer, (\ref{eqn:cond_sigma}) and (\ref{eqn:cond_LQRS}) are satisfied, and for every $s< \frac{1}{4} + \del(n,k,r)$,
    \begin{align}\label{eqn:main3_equiv}
        \bigg(\frac{R}{LQ^{1/2}}\bigg)^{n-r} S_1^{1/2}(R/L)^{-(n-r)/2}(\log Q)^{-1}\geq A_s R^{s'}
    \end{align}
for some $s'>s$ (and some nonzero constant $A_s$). Note that verifying \eqref{eqn:main3_equiv} is equivalent to choosing $\sigma, \lambda, \kappa$ such that
    \begin{align}\label{eqn:s_maximize}
        s &< \frac{\sigma}{2} + \frac{n-r}{2} - (\kappa + \lam) \frac{n-r}{2},
    \end{align}
    while (\ref{eqn:cond_sigma}) imposes $\sig \leq 1/2$ and (\ref{eqn:cond_LQRS}) imposes 
\beq\label{kls_relations}
        \kappa + k\lambda  \geq k- 1 + \sigma, \qquad \frac{n-(r-1)}{n-r}\kappa + \lambda \geq 1 ,\qquad \lambda \leq 1- \kappa (1+\Del_0).
   \eeq
By taking a linear combination of the first two relations in the line above (namely $1/(k-1)$ times the first relation plus $n-r$ times the second relation), we obtain
  \beq \label{kl_relation}
        \kappa + \lambda \geq \frac{n-(r-1) + \sigma /(k-1)}{n-(r-1)+1/(k-1)}.
    \eeq
To maximize the right-hand side of \eqref{eqn:s_maximize}, we choose $\kappa, \lambda$ so that equality holds in this relation, and substitute the resulting value for $\kappa+\lam$ into (\ref{eqn:s_maximize}). For all $k \geq 2$ the coefficient of $\sig$ on the right-hand side of (\ref{eqn:s_maximize}) is then positive, so in order to enlarge the region in (\ref{eqn:s_maximize}) as much as possible, we take $\sig=1/2$. 
Now solving for $\kappa$ and $\lambda$ that obey the first two relations in (\ref{kls_relations}) and satisfy equality in (\ref{kl_relation}) reveals
 \[
        \kappa = \frac{n-r}{2((k-1)(n-(r-1))+1)}, \qquad \lambda = 1 - \frac{n-(r-1)}{2((k-1)(n-(r-1))+1)}.
   \]
It is then true that $0<\kappa, \lambda <1$. 
Moreover, $\lam = 1 - \kappa (1+\Del_0)$ with $\Del_0 = 1/(n-r)$, so that the third relation in (\ref{kls_relations}) holds. Additionally, $\lam>(k-1)/k$, as was required in \S \ref{sec_Omega_star}. Finally, $\lambda=\lam_1/\lam_2$ is a rational number and hence we take a sequence of integers $j \maps \infty$ with $j\equiv 0 \pmod{2((k-1)(n-(r-1))+1)}$. Then for each $R=R_j=2^j$ and $L=L_j=R_j^\lambda$ as $j \maps \infty$ in this sequence, we have $R_j/L_j = 2^{j(1-\lambda)}$ is an integer, as required in Remark \ref{remark_integer}. 
Finally, we conclude that with these choices, (\ref{eqn:main3_equiv}) holds for all $s< 1/4 + \del(n,k,r),$
which ends the proof of Theorem \ref{thm:main3}.

\section{Forms and intertwining rank: examples and remarks}\label{sec_details_examples}

\subsection{Examples of Dwork-regular forms of any intertwining rank}
For each $k \geq 3$ and $2 \leq r \leq n,$ we now prove the following forms of degree $k$   are Dwork-regular over $\Q$ in $X_1,\ldots, X_n$ with   intertwining rank $r$, namely \begin{align*}
    P_k(X_1,\ldots,X_n)&=X_1^k+\cdots +   X_n^k + \sum_{2\leq j \leq r}X_1X_j^{k-1} + \sum_{2\leq i < j \leq n} X_iX_j^{k-1},\quad \text{$k \geq 3$  odd;}
   \\
        P_k(X_1,\ldots,X_n)&=X_1^k+\cdots +   X_n^k +  \sum_{2\leq j \leq r} X_1^2X_j^{k-2}+ \sum_{2\leq i < j \leq n} X_i^2X_j^{k-2},
   \quad \text{$k \geq 4$  even.}
\end{align*}
These visibly have intertwining rank $r$.
In the next sections we additionally prove these forms are indecomposable, and we compute the codimension of  all Dwork-regular forms of intertwining rank $r<n$, thus quantifying the set of forms for which Theorem \ref{thm:main2} proves a new result.

First we prove that each example $P_k$ defined above is Dwork-regular. We provide a full proof in the case $k \geq 3$ is odd; this relies on the fact that $k-1$ is then even. In the case that $k \geq 4$ is even, the proof is analogous, and relies on the fact that $k-2$ is even. 
        By Lemma \ref{lem:dwork_equiv}, it suffices to check that for all nonempty $S\subseteq \{1,...,n\}$, $P_S:=P|_{X_i=0,i\not\in S}$ is nonzero if $|S|=1$ and $P_S$ is nonsingular if $|S|\geq 2$. It is clear that $P_S$ is nonzero for $|S|=1$, so henceforth we assume $|S|\geq 2$. Suppose $S=\{\ell_1,...,\ell_m\}$ with $\ell_1<\cdots <\ell_m$.
            If $1\not\in S$ then $P_S$ is of the form
                \begin{align}\label{eqn:PS_form_1}
                  X_{\ell_1}^k+\cdots +X_{\ell_{m}}^k + \sum_{\substack{2\leq \ell_i<\ell_j\leq n,\\\ell_i,\ell_j\in S}} X_{\ell_i}X_{\ell_j}^{k-1}.
                \end{align}
            If $1\in S$ and $S\setminus \{1\}\subseteq \{r+1,...,n\}$ then $P_S$ is of the form
                \begin{align*} 
                   X_1^k + X_{\ell_2}^k+\cdots +X_{\ell_{m}}^k
                   +\sum_{\substack{2\leq \ell_i<\ell_j\leq n,\\\ell_i,\ell_j\in S}} X_{\ell_i}X_{\ell_j}^{k-1}.
                \end{align*}
            This is the sum of $X_1^k$ and a polynomial $\tilde{P}_S$ of the form \eqref{eqn:PS_form_1}, and so it is nonsingular if and only if $\tilde{P}_S$ is nonsingular. (Note that when $|S|=2$ this is diagonal and hence nonsingular.) Lastly if $1\in S$ and $S\setminus \{1\} \not\subseteq \{r+1,...,n\}$ then $P_S$ is of the form
                \begin{align}\label{eqn:PS_form_3}
                   X_1^k + X_{\ell_2}^k+\cdots +X_{\ell_{m}}^k
                   + \sum_{\substack{2\leq \ell_j \leq r,\\\ell_j\in S}} X_1X_{\ell_j}^{k-1}
                   +\sum_{\substack{2\leq \ell_i<\ell_j\leq n,\\\ell_i,\ell_j \in S}} X_{\ell_i}X_{\ell_j}^{k-1}.
                \end{align}
            In particular, if $|S|=2$ then $P_S$ is of the form
                 \begin{align}\label{eqn:PS_form_4}
                  X_1^k + X_{\ell_2}^k
                   +  X_1X_{\ell_2}^{k-1}.
                \end{align}           
            Consequently, in all cases (after relabelling variables) it suffices to check that for each $m\geq m' \geq 2$ and $\alpha \in \{0,1\}$,  
        \[
                        Q(Y_1,...,Y_m) 
                 := Y_1^k +\cdots +Y_{m}^k 
                 + \sum_{2\leq j\leq m'} Y_{1}Y_{j}^{k-1}
                 +\alpha \sum_{\substack{2\leq i<j\leq m}} Y_{i}Y_{j}^{k-1} 
                   \]
                 is nonsingular. (The case $m'=m,\al=1$ corresponds to \eqref{eqn:PS_form_1} with $Y_i=X_{\ell_i}$, the case $m'<m,\al=1$ corresponds to \eqref{eqn:PS_form_3}, and the case $m'=m=2,\al=0$ corresponds to \eqref{eqn:PS_form_4}.)
                 
             Suppose there exists $a=[a_1: \cdots : a_m]\in \mathbb{P}^{m-1}$ that is a simultaneous solution to the system
                \begin{align*}
                    \frac{\partial Q}{\partial Y_1} &= kY_{1}^{k-1} + \sum_{1<j\leq m'} Y_{j}^{k-1} =0,\\
                    \frac{\partial Q}{\partial Y_{\ell}}&=kY_{\ell}^{k-1} +(k-1)Y_1Y_{\ell}^{k-2}+\al (k-1)\sum_{2\leq i < \ell}Y_iY_{\ell}^{k-2}+\al\sum_{\ell<j\leq m} Y_j^{k-1}=0, \ \ 2 \leq \ell \leq m',\\
                    \frac{\partial Q}{\partial Y_{\ell}}&=kY_{\ell}^{k-1} +\al (k-1)\sum_{2\leq i < \ell}Y_iY_{\ell}^{k-2}+\al\sum_{\ell<j\leq m} Y_j^{k-1}=0, \ \ m'+1\leq \ell \leq m.
                \end{align*}
            Since $k$ is odd, $k-1$ is even, and so the vanishing of $\partial Q / \partial Y_{1}$ forces $a_\ell=0$ for $1\leq \ell \leq m'$. If $\alpha=0$, then the vanishing of the partials $\partial Q / \partial Y_{m'+1},...,\partial Q / \partial Y_m$ forces $a_\ell=0$ for $m'+1\leq \ell \leq m$. Otherwise if $\alpha=1$, then the vanishing of  $\partial Q / \partial Y_{m'}$ forces $a_\ell=0$ for $m'+1\leq \ell \leq m$ (by recalling $a_\ell=0$ for $1 \leq \ell \leq m')$. So $[a_1: \cdots :a_m]$ cannot represent a point in $\mathbb{P}^{m-1}$. Thus $Q$ is nonsingular, as needed.

\subsection{A criterion to check if a form is indecomposable} \label{sec:decomposability}
 We will next prove that the examples given above are indecomposable, and in particular, there is no $\mathrm{GL}_n(\Q)$ change of variables that bring them to the shape $X_1^k + Q_k(X_2,\ldots,X_n)$ (e.g. the shape required in previous work \cite{ACP23,EPV22a}). 
 We refer to the results of Harrison \cite{Har75} and Harrison-Pareigis \cite{HarPar88}, who studied the theory of higher-degree forms using the analogous theory for symmetric spaces. (Being decomposable is  also referred to as being of Sebastiani-Thom type in algebraic geometry literature, for example in the literature on Carlson and Griffiths' result \cite{CarGri80} that the generic polynomial can be reconstructed (up to a constant multiplicative factor) from its Jacobian ideal; see the explicit relation to ST-type in \cite{Wan15}.)

 Let $L$ be a field, in our case  with characteristic zero. A symmetric space of degree $k$ over $L$ is a pair $(V,\theta)$ where $V$ is a vector space over $L$ of dimension $n$ and $\theta:V^k \rightarrow L$ is a symmetric multilinear map. This is equivalent to $\mathrm{Sym}^k(V^*)$, the $k$th symmetric power of $V^*$, which is naturally identified with the space of homogeneous polynomials of $n$ variables and degree $k$ (see standard texts such as \cite{DF03},\cite{Har92}). To describe explicitly the identification between forms and symmetric spaces, write a homogeneous polynomial $F\in L[X_1,...,X_n]$ in the following symmetric form,
   \[
            F(X_1,...,X_n) = \sum_{1\leq i_1,...,i_k \leq n} c_{i_1\cdots i_k}X_{i_1}\cdots X_{i_k},
   \]
    where $c_{i_1\cdots i_k} = c_{\sigma(i_1)\cdots \sigma(i_k)}$ for all $\sigma \in S_k$ and $S_k$ is the symmetric group on $\{1,...,k\}$. 
    Let $V$ be an $n$-dimensional vector space over the field $L$ (we may view $V$ as $L^n$), and let $v_1,...,v_n$ be a basis of $V$. Define $\theta(v_{i_1},...,v_{i_k})= c_{i_1\cdots i_k}.$ Then for all $x_1,...,x_n\in L$, we have the relation
 \[
            F(x_1,...,x_n) = \theta (\sum_{i=1}^n v_i x_i, ..., \sum_{i=1}^n v_i x_i).
  \]
 
  By definition, a symmetric space $(V,\theta)$ is nondegenerate if $\theta(v,v_2,...,v_k)=0$ for all $v_2,...,v_k\in V$ implies $v=0$. Further, a symmetric space is decomposable if there exist nonzero symmetric spaces $(U,\phi)$ and $(W,\psi)$ such that $(V,\theta) = (U,\phi) \oplus (W,\psi)$. 
 Harrison showed that the decomposability of a symmetric space $(V,\theta)$ is characterized by its center $Z(V,\theta)$ which is defined as
\[
        Z(V,\theta) = \{f \in \mathrm{End}_L(V): \theta(f(v_1),v_2,...,v_k) = \theta(v_1,f(v_2),v_3,...,v_k)\}.
   \]
 Note that this also implies $ \theta(v_1,...,f(v_i),...,v_j,...,v_k) = \theta(v_1,...,v_i,...,f(v_j),...,v_k)$ for any $i,j$ since $\theta$ is symmetric.
Precisely, Harrison proves in \cite[Proposition 4.1]{Har75}: let $(V,\theta)$ be a nondegenerate symmetric space of degree $k\geq 3$ over a field $L$ of characteristic zero. Then $(V,\theta)$ is indecomposable if and only if $Z(V,\theta)$ has no nontrivial idempotents.  (An idempotent in a ring is an element $a$ such that $a^2=a$. The trivial idempotent elements are the 0 and 1, respectively the additive identity and the multiplicative identity in the ring.)

To translate this into the language of homogeneous polynomials, we define the center of a form $F$, following \cite{HarPar88}, as
   \[
        Z(F) = \{A\in M_{n}(L) | A^TH_F= H_FA\}
   \]
where $H_F$ denotes the Hessian matrix $(\partial^2 F / \partial X_i \partial X_j)_{1\leq i, j \leq n}$. Then the center, and the decomposability, of a form coincide with those of its associated symmetric space. Precisely, let $F\in L[X_1,...,X_n]$ be a form of degree $k\geq 3$ and let $(V,\theta)$ be the symmetric space associated to $F$. Then it can be shown that  $Z(F)\cong Z(V,\theta)$, and
        $F$ is indecomposable as a form if and only if $(V,\theta)$ is indecomposable as a symmetric space. 
Thus to show a form is indecomposable, it is equivalent to show that its center has no nontrivial idempotents. This is the criterion we will exploit.

It is convenient to note that over a field $L$ (with $\mathrm{char} L \ndiv k$), a form is called central if $Z(F) \simeq L.$ Harrison showed that if a form is central (over $L$), then it is absolutely indecomposable (that is, indecomposable over any field extension of $L$). 
 In our case,   to show $F$ is indecomposable over $\Q$ it suffices to show that $Z(F)\simeq \Q,$ so the form has the even stronger property of being central. We remarked earlier that 
 indecomposable forms are generic. This  is implied for $(n,k) \neq (2,3)$ over $\C$ by \cite[Thm. 3.2]{HLYZ21}  (which shows the set of central forms is open and dense in the moduli space over $\C$; this proof can be adapted to hold over $\Q$). It is also shown directly for $n \geq 3, k \geq 3$ over $\C$ by \cite[Ex. 4.3, 4.4, Cor. 6.1]{Wan15}. The case $(n,k)=(2,3)$ is more complicated, and we defer its study to a different work.

 \subsection{The examples are indecomposable}
 For $k \geq 3$, $n \geq 2$ and $2 \leq r \leq n,$ the example form $P_k$ defined above is indecomposable; we will prove this next by showing $Z(P_k)$ has no nontrivial idempotents.
Precisely, when $(n,k) \neq (2,3)$, we    show that $Z(P_k)\simeq \Q;$ for $(n,k)=(2,3)$, the example $P_3$ is also indecomposable, but with a different center.
(Note that if $r=1$, the form is decomposable since $X_1$ only appears in a diagonal term. Thus we need only consider  $2 \leq r \leq n$.)
 
We present the full proof for all odd $k \geq 5$, $n \geq 2$ and $2 \leq r \leq n$; the proof for $k=3$ and for all even $k \geq 4$ is fundamentally analogous. 
   Fix $P=P_k$ to be the example form  defined above.
    Let $H_P$ denote the Hessian of $P$. Then $H_P/(k-1)$ is the $n\times n$ matrix
        \begin{align*}
        \begin{pmatrix}
        kX_1^{k-2}  &X_2^{k-2} &\cdots & X_r^{k-2}&0&\cdots&0\\
        X_2^{k-2}   &kX_2^{k-2}+Q_2& \cdots& X_r^{k-2}&X_{r+1}^{k-2}&\cdots &X_n^{k-2}\\
        \vdots  &&\ddots&&&&\vdots\\
        X_r^{k-2}&X_r^{k-2}&\cdots &kX_r^{k-2}+Q_r&X_{r+1}^{k-2}&\cdots&X_n^{k-2}\\
        0&X_{r+1}^{k-2}&\cdots&X_{r+1}^{k-2}&kX_{r+1}^{k-2}+Q_{r+1}&\cdots&X_{n}^{k-2}\\
        \vdots&&&&&\ddots&\vdots\\
        0& X_n^{k-2}&\cdots&X_n^{k-2}&X_n^{k-2}&\cdots&kX_n^{k-2}+Q_n
        \end{pmatrix}
        \end{align*} 
        where $Q_\ell = (k-2)\sum_{1\leq i < \ell}X_iX_\ell^{k-3}$ for $\ell\leq r$ and $Q_\ell = (k-2)\sum_{2\leq i < \ell}X_iX_\ell^{k-3}$ for $\ell> r$. Note that each $Q_\ell \not\con 0$. For $k\geq 5$, each $Q_\ell$ consists of monomials that differ from each other and from the entries of $H_P/(k-1)$ that are off the diagonal, and so the vanishing of any linear combination $c_{i_1}Q_{i_1}+\cdots + c_{i_m}Q_{i_m}\con 0$ would imply $c_{i_j}=0$ for all $1\leq j \leq m$.

Let $A=(a_{ij})\in M_{n}(\Q)$ and write $H_P/(k-1)=(h_{ij})$. Let $B_A$ denote $(A^TH_P-H_PA)/(k-1)$. Then $A\in Z(P)$ if and only if $B_A=0$. Note that a priori we have $\{cI_n:c\in\Q\}\subseteq Z(P)$. The assumption that all entries of $B_A$ are 0 implies constraints on the entries of $A$ that show the reverse inclusion $Z(P)\subseteq \{cI_n:c\in\Q\}$, from which we deduce the equality $Z(P) = \{cI_n:c\in \Q\} \simeq \Q$.
 
Write $B_A=(b_{ij})$ so that $b_{ij} = \sum_{\ell=1}^n (a_{\ell i}h_{\ell j} - h_{i\ell}a_{\ell j})$. Note that since $H_P$ is symmetric, $b_{ii}=0$ and $B_A^T=-B_A$. Hence it suffices to consider the $(n^2-n)/2$ entries above the diagonal i.e. $b_{ij}$ with $i<j$. Each entry $b_{ij}$ is a polynomial, so it is $\con 0$ if and only if the coefficient of each term (after regrouping) is zero. We split the  $(n^2-n)/2$ entries of $b_{ij}$ with $i<j$ into the following five cases of $(i,j)$, based on the shapes of the rows and columns:
    \begin{enumerate}
        \item $(1,j)$ with $2\leq j \leq r$, 
        \item $(i,j)$ with $2\leq i<j\leq r$,
        \item $(1,j)$ with $r+1\leq j \leq n$,
        \item $(i,j)$ with $r+1\leq j \leq n$ and $2\leq i \leq r$, and
        \item $(i,j)$ with $r+1\leq i< j \leq n$.
    \end{enumerate}
It suffices to learn from the assumption that $b_{ij} \con 0$ in the cases (1),(3),(4).  

For case (1) with $(1,j)$ with $2\leq j \leq r$, we compute
        \begin{multline*}
            b_{1j}
            = -a_{1j}kX_1^{k-2}
            -\sum_{\substack{\ell=2}}^{j-1} a_{\ell j}X_\ell^{k-2}+(\sum_{\ell=1}^{j-1} a_{\ell 1} + a_{j1}k- a_{jj})X_j^{k-2}\\
            + \sum_{\ell=j+1}^r( a_{\ell 1}-a_{\ell j})X_\ell^{k-2}  +  \sum_{\ell=r+1}^n a_{\ell 1}X_\ell^{k-2}+a_{j1}Q_j.
        \end{multline*}
 The  assumption $b_{1j} \con 0$  for all $2\leq j \leq r$  can be seen to imply that
        \begin{align}
        a_{ii}&=a_{11}, \qquad 2 \leq i \leq r,\label{eqn:cond_1_1}\\
        a_{i1}&=0, \qquad  2 \leq i \leq n \label{eqn:cond_1_3},\\
        a_{ij}&=0, \qquad 1 \leq i \neq j\leq r.\nonumber
        \end{align}
  These give the desired conditions on the top left $r\times r$ block and the first column of $A$. If $r=n$, the proof is complete; otherwise for $r<n$ we continue, as cases (3), (4) are non-vacuous.
    
For case (3), $(1,j)$ with $r+1\leq j \leq n$, we compute
\[
    b_{1j} 
         =- a_{1j}kX_1^{k-2}
            -\sum_{\substack{\ell=2}}^{r} a_{\ell j}X_\ell^{k-2}
            +(\sum_{\ell=2}^{j-1} a_{\ell 1} + a_{j1}k)X_j^{k-2}
            +\sum_{\ell=j+1}^n a_{\ell 1}X_\ell^{k-2}
            +a_{j1}Q_j.
   \]
Thus the assumption $b_{1j}\con 0$ gives in particular the new condition
   \[
        a_{i j}=0, \qquad 1 \leq i \leq r, \quad r+1 \leq j \leq n. 
\]
This is the desired result for  the top right $r\times (n-r)$ block of $A$.

For case (4), $(i,j)$ with $r+1\leq j \leq n$ and $2\leq i \leq r$, we compute
    \begin{multline*}
        b_{ij}  
            =-(\sum_{\ell=1}^{i-1}a_{\ell j} + a_{ij}k)X_i^{k-2}
            -\sum_{\ell=i+1}^{j-1} a_{\ell j}X_\ell^{k-2}
            +(\sum_{\ell=2}^{j-1} a_{\ell i}+a_{ji}k-a_{jj})X_j^{k-2}
            \\
            +\sum_{\ell=j+1}^n (a_{ \ell i}-a_{\ell j})X_\ell^{k-2}
            -a_{ij}Q_i + a_{ji}Q_j.
    \end{multline*}
The assumption $b_{ij} \con 0$ for all $r+1\leq j \leq n$ and $2\leq i \leq r$
  implies that
    \begin{align*}
        a_{ii} &=a_{22}, \qquad 2 \leq i \leq n,\\
        a_{ij} &= 0 , \qquad r+1\leq i \leq n, \quad 2 \leq j \leq r\\
        a_{ij} &= 0 , \qquad 2 \leq i \leq n, \quad r+1\leq j \leq n, \quad i\neq j.
    \end{align*}    
The second condition combined with (\ref{eqn:cond_1_3}) confirms that the entries in the lower-left $(n-r)\times r$ block of $A$ are zeroes, while the third condition finalizes that the off-diagonal entries of the lower-right $(n-r)\times (n-r)$ block of $A$ are zeroes. Finally, combined with (\ref{eqn:cond_1_1}) the first condition shows that all diagonal entries in the lower-right block are also equal to $a_{11}$. Thus $A= a_{11}I_n$, and this completes the proof that $Z(P)\subseteq \{cI_n:c\in \Q\}$.

The above computations focused on the case of $k \geq 5$ odd. If $k\geq 4$ is even, the argument follows exactly the same structure; the assumption that $b_{ij} \con 0$ for indices in case (1) prove the top left $r \times r$ block has the desired structure, and (if $r<n$) indices from cases (3) and (4) complete the information about the remaining matrix. If $k=3$, the proof is  more complicated, because the polynomials $Q_\ell$ defined above now must be grouped with various other terms (the monomials appearing are no longer all distinct). Nevertheless, if $n \geq 3$, the assumption $b_{ij} \con 0$ in the cases (1)--(4) shows that $Z(P_3)\simeq \Q$. If $(n,k)=(2,3)$, we need only consider rank $r=2$, and only the index case (1) is non-vacuous. From $b_{ij}\con 0$ in case (1) we conclude
 $3a_{12}-a_{21}=0$ and $a_{11}-a_{22}+3a_{21}=0$ so that  
 \[
        Z(P_3) = \left\{\begin{pmatrix}
        \al - 9\be  &\be\\
        3\be &\al
        \end{pmatrix}: \al,\be\in \Q
        \right\}.
\]
Any  idempotent $A \in Z(P_3)$ must satisfy $A^2=A,$ by definition of being an idempotent. If $\be=0$, this forces $\al=0$ or $1$, corresponding to the $A$ being either of the trivial idempotents (the zero matrix or the identity matrix). If $\be \neq 0$, the identity $A^2=A$ produces three independent equations in $\al,\be$, and in particular, inspection of these equations shows that $\al$ must satisfy a quadratic equation with no rational roots. Thus $Z(P_3)$ contains no nontrivial idempotents over $\Q$. In conclusion, $P_3$ is not central over $\Q$ but it is indecomposable over $\Q$. (It is incidentally decomposable over $\overline{\Q}$, since its center contains nontrivial idempotents over $\overline{\Q}$.)

\subsection{Codimension of the class of forms}\label{sec_codim}
Let $\Mcal$ denote the moduli space of   forms $P\in \Q[X_1,...,X_n]$ of degree $k \geq 2$.  Then $\dim \Mcal= \binom{n+k-1}{n-1}$. To see this by a ``stars and bars'' argument, note that $\dim \Mcal$ is the number of monomials of degree $k$ in   $n$ variables. Each such monomial can be represented as a configuration of $k$ stars and $n-1$ bars (e.g. $X_1X_2^2$ when $k=3$ and $n=4$ would be represented by the configuration $*|**||.$) The number of such configurations is equal to the number of ways to choose the location of the $n-1$ bars (among $k+n-1$ possible places), and this is the binomial coefficient $\binom{n+k-1}{n-1}$.  

Let $\Dscr\subseteq \Mcal$ denote the set of Dwork-regular forms, and let $\Pscr_r$ denote the set of forms of intertwining rank $\leq r$, for $1\leq r \leq n$, so that
        $\Pscr_1\subseteq \Pscr_2 \subseteq \cdots \subseteq \Pscr_n = \Mcal.$
(We remark that the set of forms that have  intertwining rank precisely $r$, that is, the set $\tilde{\Pscr}_r=\Pscr_r\setminus\Pscr_{r-1}$, has dimension equal to that of $\Pscr_r$, since it can be shown that $\dim \Pscr_{r-1}<\dim \Pscr_r$.)
For each $1 \leq r <n,$ Theorem \ref{thm:main2} proves a nontrivial result for all real symbols $P$ with leading form $P_k \in \Dscr \intersect \Pscr_r.$  
 Here we compare the codimension of  $\Dscr \intersect \Pscr_{n-1}$ in $\mathcal{M}$ (the largest class to which our theorem applies) to the codimension of $\Dscr \intersect \Pscr_{1}$ in $\mathcal{M}$ (equivalent to the largest class to which the previous works \cite{ACP23,EPV22a} applied); we focus on a brief summary, since a more complete study of such forms (and their $\mathrm{GL}_n(\Q)$-orbits) will be given in other work.

Fix $1 \leq r \leq n$. First, $\Dscr$ is open in $\mathcal{M}$, and it can be shown that $\Pscr_r$ is a finite union of (irreducible) affine varieties  in $\mathcal{M}$, say $P_{r,i}$  for $i=1,\ldots, N$, so that $\Pscr_r=\cup_{i=1}^N P_{r,i}$. Then $\Dscr \intersect \Pscr_r$ is a quasi-affine variety, and consequently by \cite[Prop 1.10]{Har77}, $\dim \Dscr \intersect \Pscr_r = \dim \overline{\Dscr \intersect \Pscr_r}$. 
In general, if  $U\subseteq \A^m$ is an open set and   $X=\cup_{i=1}^N X_i\subseteq \A^m$ is a union of irreducible affine varieties $X_i$, then as long as $U\cap X_i\neq \emptyset$ for every $i$, it follows that $\overline{U\cap X} = X$.  For each $i$, it can be shown that $\Dscr \intersect P_{r,i}$ is nonempty, by precisely the examples stated above (up to re-ordering coordinates). Thus we conclude that $\overline{\Dscr \intersect \Pscr_r} = \Pscr_r$, so that it suffices to compute $\dim \Pscr_r$ in $\mathcal{M}$.

We focus on the cases of $\Pscr_1$ and $\Pscr_{n-1};$ it is easier to count the codimension.
By symmetry considerations, the dimension of $\Pscr_{n-1}$ is the dimension of the class of forms for which $X_1$ does not intertwine with $X_n$, or equivalently all those forms that do not contain monomials with the factor $X_1X_n.$
Equivalently,   all coefficients of terms of the form $X_1X_nQ_{k-2}(X_1,\ldots,X_n)$ with $Q_{k-2}$ of degree $k-2$, must be zero. This constrains the coefficients of $\binom{n+(k-2)-1}{n-1}$ monomials, so that 
\[
        \codim (\mathscr{P}_{n-1}) = \binom{n+k-3}{n-1}.
\]
On the other hand, by symmetry considerations, the dimension of $\Pscr_1$ is the dimension of the class of forms $cX_1^k+Q_k(X_2,...,X_n)$, where $Q_k$ has degree $k$; the dimension of such polynomials is $\binom{n+k-2}{n-2}+1$. Equivalently,
\[
        \codim (\mathscr{P}_{1}) = \binom{n+k-1}{n-1} - \binom{n+k-2}{n-2}-1 = \codim (\mathscr{P}_{n-1}) + \binom{n+k-3}{n-2}-1.
\]
Thus $\Pscr_{n-1}$ is a larger class of forms, with $\dim (\mathscr{P}_{n-1}) - \dim (\mathscr{P}_{1})  =\binom{n+k-3}{n-2}-1$ behaving asymptotically like $\sim c_nk^{n-2}$ if $n$ is fixed and $k \maps \infty$, or like $\sim c_kn^{k-1}$ if $k$ is fixed and $n \maps \infty.$

 \section*{Acknowledgements}
 The first author  has been partially supported during portions of this research by NSF DMS-2200470. The second author   has been partially supported during portions of this research by NSF DMS-2200470, NSF
CAREER grant DMS-1652173 and a Joan and Joseph Birman Fellowship, and thanks the Hausdorff Center for Mathematics for productive research visits as a Bonn Research Chair. We thank the anonymous referees for  helpful questions and comments, and the second author thanks Matilde Lal\'{i}n for helpful conversations.

\bibliographystyle{alpha}

\bibliography{NoThBibliography}

\begin{thebibliography}{DGLZ18}

\bibitem[ACP23]{ACP23}
C.~An, R.~Chu, and L.~B. Pierce.
\newblock Counterexamples for high-degree generalizations of the {S}chrödinger
  maximal operator.
\newblock {\em IMRN}, 2023(10):8371--8418, 2023.

\bibitem[AH95]{AleHir95}
J.~Alexander and A.~Hirschowitz.
\newblock Polynomial interpolation in several variables.
\newblock {\em J. Algebraic Geom.}, 4(2):201--222, 1995.

\bibitem[BAD91]{BenDev91}
M.~Ben-Artzi and A.~Devinatz.
\newblock Local smoothing and convergence properties of {S}chr\"{o}dinger type
  equations.
\newblock {\em J. Funct. Anal.}, 101(2):231--254, 1991.

\bibitem[BCLP22]{BCLP22}
A.~Bucur, A.~C. Cojocaru, M.~N. Lal\'{i}n, and L.~B. Pierce.
\newblock Geometric generalizations of the square sieve, with an application to
  cyclic covers.
\newblock {\em Mathematika}, 69(1):106--154, 2022.

\bibitem[Bou95]{Bou95}
J.~Bourgain.
\newblock Some new estimates on oscillatory integrals.
\newblock In {\em Essays on {F}ourier analysis in honor of {E}lias {M}. {S}tein
  ({P}rinceton, {NJ}, 1991)}, volume~42 of {\em Princeton Math. Ser.}, pages
  83--112. Princeton Univ. Press, Princeton, NJ, 1995.

\bibitem[Bou13]{Bou13}
J.~Bourgain.
\newblock On the {S}chr\"{o}dinger maximal function in higher dimensions.
\newblock {\em Tr. Mat. Inst. Steklova}, 280:53--66, 2013.

\bibitem[Bou16]{Bou16}
J.~Bourgain.
\newblock A note on the {S}chr\"{o}dinger maximal function.
\newblock {\em J. Anal. Math.}, 130:393--396, 2016.

\bibitem[Car80]{Car80}
L.~Carleson.
\newblock Some analytic problems related to statistical mechanics.
\newblock In {\em Euclidean Harmonic Analysis ({P}roc. {S}em., {U}niv.
  {M}aryland, {C}ollege {P}ark, {M}d., 1979)}, volume 779 of {\em Lecture Notes
  in Math.}, pages 5--45. Springer, Berlin, 1980.

\bibitem[Car85]{Car85}
A.~Carbery.
\newblock Radial {F}ourier multipliers and associated maximal functions.
\newblock In {\em Recent progress in {F}ourier analysis ({E}l {E}scorial,
  1983)}, volume 111 of {\em North-Holland Math. Stud.}, pages 49--56.
  North-Holland, Amsterdam, 1985.

\bibitem[CCG12]{CCG12}
E.~Carlini, M.~V. Catalisano, and A.~V. Geramita.
\newblock The solution to the {W}aring problem for monomials and the sum of
  coprime monomials.
\newblock {\em J. Algebra}, 370:5--14, 2012.

\bibitem[CG80]{CarGri80}
J.~A. Carlson and P.~A. Griffiths.
\newblock Infinitesimal variations of {H}odge structure and the global
  {T}orelli problem.
\newblock In {\em Journ\'{e}es de {G}\'{e}ometrie {A}lg\'{e}brique d'{A}ngers,
  {J}uillet 1979/{A}lgebraic {G}eometry, {A}ngers, 1979}, pages 51--76.
  Sijthoff \& Noordhoff, Alphen aan den Rijn---Germantown, Md., 1980.

\bibitem[Cow83]{Cow83}
M.~G. Cowling.
\newblock Pointwise behavior of solutions to {S}chr\"{o}dinger equations.
\newblock In {\em Harmonic analysis ({C}ortona, 1982)}, volume 992 of {\em
  Lecture Notes in Math.}, pages 83--90. Springer, Berlin, 1983.

\bibitem[Del74]{Del74}
P.~Deligne.
\newblock La conjecture de {W}eil: I.
\newblock {\em Publications Math\'ematiques de l'IH\'ES}, 43:273--307, 1974.

\bibitem[DF03]{DF03}
D.~S. Dummit and R.~M. Foote.
\newblock {\em Abstract Algebra}.
\newblock Wiley, 2003.

\bibitem[DGL17]{DGL17}
X.~Du, L.~Guth, and X.~Li.
\newblock A sharp {S}chr\"odinger maximal estimate in ${\R^2}$.
\newblock {\em Ann. of Math.}, 186:607--640, 2017.

\bibitem[DGLZ18]{DGLZ18}
X.~Du, L.~Guth, X.~Li, and R.~Zhang.
\newblock Pointwise convergence of {S}chr\"{o}dinger solutions and multilinear
  refined {S}trichartz estimates.
\newblock {\em Forum Math. Sigma}, 6:e14, 18, 2018.

\bibitem[DK82]{DahKen82}
B.~E.~J. Dahlberg and C.~E. Kenig.
\newblock A note on the almost everywhere behavior of solutions to the
  {S}chr\"{o}dinger equation.
\newblock In {\em Harmonic analysis ({M}inneapolis, {M}inn., 1981)}, volume 908
  of {\em Lecture Notes in Math.}, pages 205--209. Springer, Berlin-New York,
  1982.

\bibitem[Dwo62]{Dwo62}
B.~Dwork.
\newblock On the zeta function of a hypersurface.
\newblock {\em Inst. Hautes \'Etudes Sci. Publ. Math.}, (12):5--68, 1962.

\bibitem[DZ19]{DuZha19}
X.~Du and R.~Zhang.
\newblock Sharp {$L^2$} estimates of the {S}chr\"{o}dinger maximal function in
  higher dimensions.
\newblock {\em Ann. of Math. (2)}, 189(3):837--861, 2019.

\bibitem[EPV22]{EPV22a}
D.~Eceizabarrena and F.~Ponce-Vanegas.
\newblock Pointwise convergence over fractals for dispersive equations with
  homogeneous symbol.
\newblock {\em J. Math. Anal. Appl.}, 515(1):Paper No. 126385, 57, 2022.

\bibitem[Har75]{Har75}
D.~K. Harrison.
\newblock A {G}rothendieck ring of higher degree forms.
\newblock {\em Journal of Algebra}, 35:123--138, 1975.

\bibitem[Har77]{Har77}
R.~Hartshorne.
\newblock {\em Algebraic Geometry}.
\newblock Graduate Texts in Mathematics. Springer, 1977.

\bibitem[Har92]{Har92}
J.~Harris.
\newblock {\em Algebraic Geometry: A First Course}.
\newblock Graduate Texts in Mathematics. Springer, 1992.

\bibitem[HLYZ21]{HLYZ21}
H.-L. Huang, H.~Lu, Y.~Ye, and C.~Zhang.
\newblock On centres and direct sum decompositions of higher degree forms.
\newblock {\em Linear and Multilinear Algebra}, 0(0):1--17, 2021.

\bibitem[HM22]{HanMoo22}
K.~Han and H.~Moon.
\newblock A new bound for the {W}aring rank of monomials.
\newblock {\em SIAM J. Appl. Algebra Geom.}, 6(3):407--431, 2022.

\bibitem[HP88]{HarPar88}
D.~K. Harrison and B.~Pareigis.
\newblock Witt rings of higher degree forms.
\newblock {\em Communications in Algebra}, 16(6):1275--1313, 1988.

\bibitem[IK04]{IwaKow04}
H.~Iwaniec and E.~Kowalski.
\newblock {\em Analytic {N}umber {T}heory}, volume~53.
\newblock Amer. Math. Soc. Colloquium Publications, Providence RI, 2004.

\bibitem[Kat08]{Kat08}
N.~M. Katz.
\newblock Estimates for mixed character sums.
\newblock {\em Geom. funct. anal.}, 18:1251--1269, 2008.

\bibitem[Kat09]{Kat09}
N.~M. Katz.
\newblock On a question of {B}rowning and {H}eath-{B}rown.
\newblock In {\em Analytic number theory}, pages 267--288. Cambridge Univ.
  Press, Cambridge, 2009.

\bibitem[KPV91]{KPV91}
C.~E. Kenig, G.~Ponce, and L.~Vega.
\newblock Oscillatory integrals and regularity of dispersive equations.
\newblock {\em Indiana Univ. Math. J.}, 40(1):33--69, 1991.

\bibitem[KR83]{KenRui83}
C.~E. Kenig and A.~Ruiz.
\newblock A strong type {$(2,\,2)$} estimate for a maximal operator associated
  to the {S}chr\"{o}dinger equation.
\newblock {\em Trans. Amer. Math. Soc.}, 280(1):239--246, 1983.

\bibitem[Lan02]{Lan02}
S.~Lang.
\newblock {\em Algebra}.
\newblock Springer, New York, NY, 2002.

\bibitem[Lee06]{Lee06}
S.~Lee.
\newblock On pointwise convergence of the solutions to {S}chr\"{o}dinger
  equations in {$\mathbb{R}^2$}.
\newblock {\em Int. Math. Res. Not.}, pages Art. ID 32597, 21, 2006.

\bibitem[LR17]{LucRog17}
R.~Luc\`a and K.~M. Rogers.
\newblock Coherence on fractals versus pointwise convergence for the
  {S}chr\"{o}dinger equation.
\newblock {\em Comm. Math. Phys.}, 351(1):341--359, 2017.

\bibitem[LR19]{LucRog19}
R.~Luc\`a and K.~M. Rogers.
\newblock A note on pointwise convergence for the {S}chr\"{o}dinger equation.
\newblock {\em Math. Proc. Cambridge Philos. Soc.}, 166(2):209--218, 2019.

\bibitem[LZ21]{LamZie21x}
A.~Lampert and T.~Ziegler.
\newblock Relative rank and regularization.
\newblock {\em arXiv:2106.03933}, 2021.

\bibitem[LZ22]{LamZie22x}
A.~Lampert and T.~Ziegler.
\newblock Schmidt rank and algebraic closure.
\newblock {\em arXiv:2205.05329}, 2022.

\bibitem[MVV96]{MVV96}
A.~Moyua, A.~Vargas, and L.~Vega.
\newblock Schr\"{o}dinger maximal function and restriction properties of the
  {F}ourier transform.
\newblock {\em Internat. Math. Res. Notices}, (16):793--815, 1996.

\bibitem[OS03]{ORySha03}
M.~O'Ryan and D.~B. Shapiro.
\newblock Centers of higher degree forms.
\newblock {\em Linear Algebra Appl.}, 371:301--314, 2003.

\bibitem[Pal97]{Pal97}
R.~S. Palais.
\newblock The symmetries of solitons.
\newblock {\em Bull. Amer. Math. Soc. (N.S.)}, 34(4):339--403, 1997.

\bibitem[Pie20]{Pie20}
L.~B. Pierce.
\newblock On {B}ourgain's counterexample for the {S}chr\"odinger maximal
  function.
\newblock {\em Quart. J. Math.}, 71:1309--1344, 2020.

\bibitem[Pum06]{Pum06}
S.~Pumpl\"{u}n.
\newblock Indecomposable forms of higher degree.
\newblock {\em Math. Z.}, 253(2):347--360, 2006.

\bibitem[Rez13]{Rez13}
B.~Reznick.
\newblock On the length of binary forms.
\newblock In {\em Quadratic and higher degree forms}, volume~31 of {\em Dev.
  Math.}, pages 207--232. Springer, New York, 2013.

\bibitem[RS00]{RanSch00}
K.~Ranestad and F.-O. Schreyer.
\newblock Varieties of sums of powers.
\newblock {\em J. Reine Angew. Math.}, 525:147--181, 2000.

\bibitem[RVV06]{RVV06}
K.~M. Rogers, A.~Vargas, and L.~Vega.
\newblock Pointwise convergence of solutions to the nonelliptic
  {S}chr\"{o}dinger equation.
\newblock {\em Indiana Univ. Math. J.}, 55(6):1893--1906, 2006.

\bibitem[Sj{\"{o}}87]{Sjo87}
P.~Sj{\"{o}}lin.
\newblock Regularity of solutions to the {S}chr\"{o}dinger equation.
\newblock {\em Duke Math. J.}, 55(3):699--715, 1987.

\bibitem[Sj{\"{o}}98]{Sjo98}
P.~Sj{\"{o}}lin.
\newblock A counter-example concerning maximal estimates for solutions to
  equations of {S}chr\"{o}dinger type.
\newblock {\em Indiana Univ. Math. J.}, 47(2):593--599, 1998.

\bibitem[Tao06]{Tao06}
T.~Tao.
\newblock {\em Nonlinear dispersive equations}, volume 106 of {\em CBMS
  Regional Conference Series in Mathematics}.
\newblock Published for the Conference Board of the Mathematical Sciences,
  Washington, DC; by the American Mathematical Society, Providence, RI, 2006.

\bibitem[TV00]{TaoVar00}
T.~Tao and A.~Vargas.
\newblock A bilinear approach to cone multipliers. {I}. {R}estriction
  estimates.
\newblock {\em Geom. Funct. Anal.}, 10(1):185--215, 2000.

\bibitem[Veg88]{Veg88b}
L.~Vega.
\newblock {\em {El multiplicador de Schr\"odinger. La funcion maximal y los
  operadores de restricci\'on}}.
\newblock Universidad Aut\'onoma de Madrid, 1988.

\bibitem[Wan15]{Wan15}
Z.~Wang.
\newblock On homogeneous polynomials determined by their {J}acobian ideal.
\newblock {\em Manuscripta Math.}, 146(3-4):559--574, 2015.

\end{thebibliography}

\end{document}